\documentclass[11pt,reqno]{amsart}
\usepackage[T1]{fontenc}
\usepackage[utf8]{inputenc}
\usepackage{amsmath,amssymb,amsfonts,amsthm,mathtools,booktabs,array}
\usepackage[a4paper,margin=2.5cm]{geometry}
\usepackage{lmodern}
\usepackage{microtype}
\usepackage{xcolor,needspace,float}
\usepackage{tikz}
\usetikzlibrary{arrows.meta}
\usepackage{hyperref}

\numberwithin{equation}{section}
\hypersetup{colorlinks=true,linkcolor=black,citecolor=black,urlcolor=black}
\allowdisplaybreaks
\newtheorem{theorem}{Theorem}[section]
\newtheorem{lemma}[theorem]{Lemma}
\newtheorem{proposition}[theorem]{Proposition}
\newtheorem{corollary}[theorem]{Corollary}
\newtheorem*{theoremA}{Theorem A}
\newtheorem*{theoremB}{Theorem B}
\newtheorem*{theoremC}{Theorem C}
\theoremstyle{definition}
\newtheorem{definition}[theorem]{Definition}

\theoremstyle{remark}
\newtheorem{remark}[theorem]{Remark}

\newcommand{\rev}[1]{{#1}}
\newcommand{\attribrev}[1]{{#1}}

\newcommand{\lastrev}[1]{{#1}}
\newcommand{\introrev}[1]{{#1}}
\newcommand{\func}[1]{\operatorname{#1}}

\begin{document}
\title[Polynomial BH Bounds and HL Inequalities for Finite Local Systems]{Polynomial Bohnenblust--Hille Bounds and Hardy--Littlewood Inequalities for Finite Local Systems}
\author[A. Barbosa et al.]{Anderson Barbosa}
\address{Universidade Federal Rural do Semi-\'Arido (UFERSA), Rio Grande do
Norte, Brazil}
\email{anderson.barbosa@ufersa.edu.br}
\author[]{Daniel N\'u\~nez-Alarc\'on}
\address{Department of Mathematics, Universidad Nacional de Colombia,
Bogot\'a, Colombia}
\email{Dnuneza@unal.edu.co}
\author[]{Daniel Pellegrino}
\address{Departamento de Matem\'atica, Universidade Federal da Para\'iba,
Jo\~ao Pessoa, PB, Brazil}
\email{daniel.pellegrino@academico.ufpb.br}
\author[]{Anselmo Raposo Jr.}
\address{Coordena\c{c}\~{a}o do Curso de Matem\'atica -- Bacharelado, Universidade Federal do Maranh\~ao,
S\~ao Lu\'is, MA, Brazil}
\email{anselmo.junior@ufma.br}
\date{}

\begin{abstract}
\rev{The complex polynomial Bohnenblust--Hille inequality states that, for a
$d$-homogeneous polynomial $P:\mathbb C^N\to\mathbb C$,
$P(z)=\sum_{|\alpha|=d}c_\alpha z^\alpha$, the coefficient norm
$(\sum_{|\alpha|=d}|c_\alpha|^{p_d})^{1/p_d}$ is bounded by
$B_d^{\mathrm{pol}}\|P\|_\infty$, where $p_d=2d/(d+1)$ and
$B_d^{\mathrm{pol}}$ is independent of $N$. Here $\|P\|_\infty$ is the
supremum norm on the unit polydisc.
This inequality has a Boolean analogue for the Fourier--Walsh coefficients
of functions of degree at most $d$, with the supremum taken over
$\{-1,1\}^N$. In this setting, Defant, Masty{\l}o and P\'erez obtained
subexponential bounds, and Ivanisvili established polynomial growth. Further polynomial
estimates were obtained by Slote, Tseng and Volberg in the Boolean setting
and by Slote and Volberg for cyclic groups.
We study these coefficient inequalities in a broader framework of finite
systems of functions that encompasses and extends the Boolean and cyclic
settings. This framework also admits a natural $\ell_p$ formulation,
leading to Hardy--Littlewood-type inequalities.
For orthonormal systems, we obtain polynomial bounds for the
Bohnenblust--Hille constants and, for sufficiently large $p$, for the
constants in these Hardy--Littlewood-type inequalities. In the Boolean
case, the polynomial power is $5/2$, with a cubic logarithmic factor.}
\end{abstract}

\subjclass[2020]{Primary 46G25; Secondary 42C10, 43A46}
\keywords{Bohnenblust--Hille inequality, Hardy--Littlewood inequality,
Boolean cube, finite local systems, hypercontractivity}
\maketitle
\tableofcontents

\section{Introduction}

The Bohnenblust--Hille inequality originated in the study of absolute
convergence of Dirichlet series. In 1931, Bohnenblust and Hille \cite%
{BohnenblustHille} determined the maximal width $1/2$ of the strip between
uniform and absolute convergence. The coefficient inequality underlying that
theorem has since become a basic tool in the study of homogeneous
polynomials, Dirichlet series and Bohr radii.

Let $d,N$ be positive integers, and put $p_{d}=2d/(d+1)$. For a multi-index $%
\alpha \in \mathbb{N}_{0}^{N}$, where $\mathbb{N}_{0}=\{0,1,2,\ldots \}$,
write $|\alpha |=\alpha _{1}+\cdots +\alpha _{N}$ and, for $z=\left(
z_{1},\ldots ,z_{N}\right) \in \mathbb{C}^{N}$, denote $z^{\alpha
}=\prod_{j=1}^{N}z_{j}^{\alpha _{j}}$. A complex $d$-homogeneous polynomial
is a map 
\begin{equation*}
P:\mathbb{C}^{N}\longrightarrow \mathbb{C},\qquad P(z)=\sum_{|\alpha
|=d}c_{\alpha }z^{\alpha }.
\end{equation*}%
\Needspace{4\baselineskip}
Writing $\mathbb{T}=\{z\in \mathbb{C}:|z|=1\}$, its supremum norm on the
polytorus is 
\begin{equation*}
\Vert P\Vert _{\mathbb{T}^{N}}:=\max_{z\in \mathbb{T}^{N}}|P(z)|.
\end{equation*}%
The polynomial Bohnenblust--Hille inequality asserts that 
\begin{equation}
\left( \sum_{|\alpha |=d}|c_{\alpha }|^{p_{d}}\right) ^{1/p_{d}}\leq B_{d}^{%
\mathrm{pol}}\Vert P\Vert _{\mathbb{T}^{N}},  \label{intro:classical-BH}
\end{equation}%
where $B_{d}^{\mathrm{pol}}$ depends on $d$ and is independent of $N$. The
exponent $p_{d}$ is optimal among dimension-free coefficient exponents.

The dependence on $d$ is essential in applications in which the degree grows
together with the dimension. Defant, Frerick, Ortega-Cerd\`a, Ouna\"ies and
Seip \cite{DefantFrerickOrtegaOunaiesSeip} proved that the polynomial
constants are hypercontractive: 
\begin{equation*}
B_d^{\mathrm{pol}}\le C^d 
\end{equation*}
for an absolute $C>1$. The subsequent subexponential estimates in \cite%
{BayartPellegrinoSeoane} were used to determine the asymptotic Bohr radius
of the polydisk, showing how quantitative coefficient bounds enter a
concrete problem of holomorphic analysis.
Polynomial growth of $B_d^{\mathrm{pol}}$ was subsequently established in
\cite[Theorem~A]{PellegrinoTeixeiraBH}.
On the Boolean cube, Defant, Masty{\l}o and P\'erez
\cite{DefantMastyloPerez} obtained subexponential bounds for the
Fourier--Walsh coefficients of real-valued functions. The same growth scale
holds for complex-valued functions by separating real and imaginary parts.
We write $\mathsf B_d^{\mathrm{Bool}}$ for the optimal constant for
complex-valued functions of degree at most $d$. More recently, Ivanisvili
\cite[Theorem~1.1]{Ivanisvili2026} proved
\begin{equation*}
\mathsf B_d^{\mathrm{Bool}}\le C d^{27}.
\end{equation*}
A sharper polynomial power was subsequently obtained by Slote, Tseng and
Volberg \cite[Theorem~2.1]{SloteTsengVolberg}: for every
$\varepsilon>0$,
\begin{equation}
\mathsf B_d^{\mathrm{Bool}}
\le
C_\varepsilon
d^{\frac32+\frac1{\log 2}+\varepsilon}
=
C_\varepsilon
d^{2.942695\ldots+\varepsilon}.
\label{eq:intro-STV}
\end{equation}
All these estimates are uniform in the ambient dimension.
Our main result improves the Boolean bound to
\begin{equation*}
\mathsf B_d^{\mathrm{Bool}}\le C d^{5/2}(1+\log d)^3,
\end{equation*}
and extends the estimates to finite local orthonormal systems.

\introrev{The Boolean arguments in
\cite{Ivanisvili2026,SloteTsengVolberg} build on the weighted square-function
method introduced in \cite{PellegrinoTeixeiraBH} for complex homogeneous
polynomials. In the discrete setting, coordinate partitions produce
sections whose coefficient norms must be estimated simultaneously.
We extend the simultaneous section estimate of Slote, Tseng and Volberg
\cite{SloteTsengVolberg} to finite local orthonormal systems.
}

The functions $1,x$ on $\{-1,1\}$ and the characters
$1,z,\ldots,z^{q-1}$ on the $q$th roots of unity are orthonormal
for normalized counting measure. We allow any local orthonormal
basis with first element $1$.

Finite cyclic groups place the Boolean cube in a larger family. For
$q\ge2$, let
\begin{equation*}
C_q:=\{e^{2\pi i k/q}:0\le k<q\}.
\end{equation*}
Every function $f:C_q^N\to\mathbb C$ has the unique expansion
\begin{equation*}
f(x)=
\sum_{\alpha\in\{0,\ldots,q-1\}^N}
\widehat f(\alpha)x^\alpha,
\qquad
\widehat f(\alpha)
=
q^{-N}\sum_{x\in C_q^N}f(x)\overline{x^\alpha}.
\end{equation*}
The chosen representatives define the canonical total degree $|\alpha|$.
Slote, Volberg and Zhang \cite{SloteVolbergZhang} established a
dimension-free Bohnenblust--Hille inequality for bounded total degree,
and Becker, Klein, Slote, Volberg and Zhang
\cite{BeckerKleinSloteVolbergZhang} obtained dimension-free
discretization estimates. In particular,
\cite[Corollary~7]{BeckerKleinSloteVolbergZhang} gives
\begin{equation*}
\left(
\sum_{|\alpha|\le d}
|\widehat f(\alpha)|^{p_d}
\right)^{1/p_d}
\le
(C\log q)^{2d}\|f\|_\infty
\end{equation*}
for bounded total degree.

\introrev{We use the interaction order}
\begin{equation*}
s(\alpha)
:=
\#\{j:\alpha_j\ne0\}.
\end{equation*}
It coincides with degree on the Boolean cube but, for $q>2$, ignores the
size of the nonzero local frequency and records only the number of
coordinates on which a character depends. Let
$\mathrm{BH}^{\mathrm{int}}_{d,q}$ denote the optimal dimension-free
constant, for functions of interaction order at most $d$, in
\begin{equation}
\left(
\sum_{s(\alpha)\le d}
|\widehat f(\alpha)|^{p_d}
\right)^{1/p_d}
\le
\mathrm{BH}^{\mathrm{int}}_{d,q}\|f\|_\infty.
\label{eq:intro-cyclic-interaction}
\end{equation}

A support-sensitive viewpoint and related local invariants on Hamming
schemes were developed by Defant, Galicer, Mansilla, Masty{\l}o and Muro
\cite{DefantEtAlHamming}. Subexponential growth for the larger
interaction-order class was obtained in \cite{PellegrinoRaposoCyclic}.
More recently, Slote and Volberg \cite[Theorem~1.1]{SloteVolbergPolynomial} combined the
weighted method of \cite{PellegrinoTeixeiraBH} with the Potts
hypercontractive estimate in \cite{PellegrinoRaposoCyclic} to obtain
polynomial growth. With
\begin{equation*}
\gamma_2=\frac12,
\qquad
\gamma_q=
\frac{q\log(q-1)}{4(q-2)}
\quad(q\ge3),
\end{equation*}
their estimate reads
\begin{equation}
\mathrm{BH}^{\mathrm{int}}_{d,q}
\le
C(q)d^{\,4\gamma_q+5}.
\label{eq:intro-SV-cyclic}
\end{equation}

On a fixed $q$-point space, the Bohnenblust--Hille constants
associated with two local orthonormal bases differ by at most a factor
$\sqrt{q-1}$ (Proposition~\ref{prop:change-local-basis}).

To obtain a Hardy--Littlewood inequality, we insert a complex variable
$r_j$ for each active coordinate $j$ of a Fourier term and take the
supremum over the unit ball of $\ell_p^N$. For an expansion of
interaction order exactly $d$, the resulting expression is
$d$-homogeneous in the variables $r_j$.

Hardy and Littlewood \cite{HardyLittlewood} proved a coefficient
inequality for bilinear forms, and Praciano-Pereira
\cite{PracianoPereira} gave its multilinear extension. We use the sharp
coefficient exponents of the complex polynomial Hardy--Littlewood
inequality \cite{AlbuquerqueBayartPellegrinoSeoane,DimantSevillaPeris}.
For $d\ge1$ and $d<p\le\infty$, set
\begin{equation}
\label{eq:rho}
\rho_{d,p}:=
\begin{cases}
\dfrac{p}{p-d},
& d<p\le2d,\\[1.2ex]
\dfrac{2dp}{dp+p-2d},
& 2d\le p<\infty,\\[1.2ex]
\dfrac{2d}{d+1},
& p=\infty.
\end{cases}
\end{equation}
The two expressions agree at $p=2d$, where $\rho_{d,2d}=2$;
at $p=\infty$, $\rho_{d,\infty}=2d/(d+1)=p_d$.
Theorem~C proves that these exponents remain optimal for the finite
linearly independent local systems considered here.

For $A\subseteq[N]:=\{1,\ldots,N\}$, put
$w_A(x):=\prod_{j\in A}x_j$. For a complex-valued function on the
Boolean cube \rev{with a homogeneous Walsh expansion of degree $d$}, write 
\begin{equation*}
f(x)=\sum_{|A|=d}a_Aw_A(x),\qquad  P_f(z)=\sum_{|A|=d}a_A\prod_{j\in
A}z_j\quad(z\in\mathbb{C}^N). 
\end{equation*}
For $d<p\le\infty$, introduce a complex radial vector $\mathbf r=(r_1,\ldots,r_N)$.
Define the norm 
\begin{equation*}
\|f\|_{p,\mathrm{Bool}}  :=\sup_{\|\mathbf r\|_{\ell_p^N}\le1}\max_{x\in\{-1,1\}^N} 
\left|\sum_{|A|=d}a_A\prod_{j\in A}r_jx_j\right| 
=\sup_{\|z\|_{\ell_p^N}\le1}|P_f(z)|. 
\end{equation*}
Indeed, multiplication by $x\in\{-1,1\}^N$ preserves the complex $\ell_p$
ball. The classical polynomial inequality therefore gives a dimension-free
coefficient estimate \rev{for these homogeneous Boolean polynomials}.

For the cyclic expansion $f(x)=\sum_{s(\alpha)=d}a_\alpha x^\alpha$, the
same prescription gives 
\begin{equation*}
\widetilde f(\mathbf r,x)  =\sum_{s(\alpha)=d}a_\alpha 
\left(\prod_{j:\alpha_j\ne0}r_j\right)x^\alpha,  \qquad (\mathbf r,x)\in\mathbb{C}%
^N\times C_q^N, 
\end{equation*}
and 
\begin{equation*}
\|f\|_{p,C_q}  =\sup_{\mathbf r\in\mathbb{C}^N,\,\|\mathbf r\|_p\le1}\max_{x\in
C_q^N}|\widetilde f(\mathbf r,x)|. 
\end{equation*}
The radial factor is $r_j$, irrespective of the value of the nonzero
frequency $\alpha_j$. Consequently the radial degree is exactly the
interaction order $d$. The coefficient inequality we study is 
\begin{equation*}
\left(\sum_{s(\alpha)=d}|a_\alpha|^{\rho_{d,p}}\right)^{1/\rho_{d,p}}  \le 
\mathsf{H}_{d,p}(C_q)\|f\|_{p,C_q}. 
\end{equation*}
We call $\mathsf{H}_{d,p}(C_q)$ the Hardy--Littlewood constant for these
Fourier expansions: it is the least constant for which the displayed
inequality holds for every dimension $N$ and \rev{every expansion in which
each term involves exactly $d$ coordinates}. Theorem~C establishes this inequality and proves that 
$\rho_{d,p}$ cannot be decreased in either the Boolean or the cyclic
setting. Theorem~B supplies quantitative bounds for $p\ge2d$ from the
discrete Bohnenblust--Hille constants.

At $p=\infty$, complex radial factors are still allowed, so the radial norm
can exceed the supremum on the finite group. The optimality in Theorem~C
concerns the coefficient exponent $\rho_{d,p}$.

To include the Boolean and cyclic cases in the same setting, we use product
bases on finite spaces. Theorem~A requires a complete orthonormal basis for
the uniform measure, while Theorem~C allows any linearly independent local
family containing the constants.

Fix an integer $q\ge2$. Let $\Omega$ be a set of $q$ points and let $\mu_q$
be the probability measure assigning mass $1/q$ to each point. Choose
functions 
\begin{equation*}
\varphi_u:\Omega\longrightarrow\mathbb{C}\qquad(0\le u\le q-1) 
\end{equation*}
forming an orthonormal basis $\Phi=(\varphi_0,\ldots,\varphi_{q-1})$ of $%
L_2(\Omega,\mu_q)$, with $\varphi_0\equiv1$. Thus 
\begin{equation*}
\frac1q\sum_{x\in\Omega}\varphi_u(x)\overline{\varphi_v(x)}  =%
\begin{cases}
1, & u=v, \\ 
0, & u\ne v.%
\end{cases}
\end{equation*}
The integer $q$ is the size of the local space. For each $N\ge1$, we
consider the product space $\Omega^N$, endowed with the product probability
measure $\mu_q^N$; $N$ is the number of coordinates.

For $\alpha\in\{0,\ldots,q-1\}^N$, define the product function 
\begin{equation*}
\Phi_\alpha:\Omega^N\longrightarrow\mathbb{C},\qquad 
\Phi_\alpha(x)=\prod_{j=1}^N\varphi_{\alpha_j}(x_j). 
\end{equation*}
These functions form an orthonormal basis of $L_2(\Omega^N,\mu_q^N)$.
Consequently every $f:\Omega^N\to\mathbb{C}$ has the unique expansion 
\begin{equation*}
f=\sum_{\alpha\in\{0,\ldots,q-1\}^N}\widehat f_\Phi(\alpha)\Phi_\alpha, 
\qquad  \widehat f_\Phi(\alpha)=\frac1{q^N}\sum_{x\in\Omega^N}  f(x)%
\overline{\Phi_\alpha(x)}. 
\end{equation*}
We use the norm $\|f\|_\infty=\max_{x\in\Omega^N}|f(x)|$.

Since $\varphi_0=1$, the active coordinate set of $\Phi_\alpha$ and its
cardinality are 
\begin{equation*}
\func{supp}(\alpha)=\{j\in\{1,\ldots,N\}:\alpha_j\ne0\},\qquad s(\alpha)=|%
\func{supp}(\alpha)|. 
\end{equation*}
For an integer $d\ge1$, we say that $f$ has interaction order at most $d$ if 
$\widehat f_\Phi(\alpha)=0$ whenever $s(\alpha)>d$.
\rev{We call the expansion homogeneous of order $d$ if its coefficients
vanish whenever $s(\alpha)\ne d$, so that each nonzero term involves
exactly $d$ coordinates. Here homogeneity refers to the number of active
coordinates, rather than to the sum of the local exponents.}

Two choices recover the settings discussed above:

\begin{itemize}
\item For $\Omega=\{-1,1\}$, $\varphi_0(x)=1$ and $\varphi_1(x)=x$, the
functions $\Phi_\alpha$ are the Walsh monomials. Interaction order is then
the usual degree of a Boolean polynomial.

\item For $\Omega=C_q=\{z\in\mathbb{C}:z^q=1\}$ and $\varphi_u(z)=z^u$, the
functions $\Phi_\alpha$ are the cyclic characters $z^\alpha$. Interaction
order counts the nonzero entries of $\alpha$, independently of their values.
This is the convention of \cite%
{PellegrinoRaposoCyclic,SloteVolbergPolynomial}.
\end{itemize}

The radial construction introduced for the Boolean and cyclic cases takes
the following form in this setting. For $d<p\le\infty$ and a
\rev{homogeneous expansion of order $d$},
$f=\sum_{s(\alpha)=d}a_\alpha\Phi_\alpha$, define 
\begin{equation*}
\widetilde f(\mathbf r,x)=\sum_{s(\alpha)=d}a_\alpha  \prod_{j\in\func{supp}%
\alpha}r_j\varphi_{\alpha_j}(x_j),  \qquad (\mathbf r,x)\in\mathbb{C}%
^N\times\Omega^N, 
\end{equation*}
and 
\begin{equation*}
\|f\|_{p,\Phi}=\sup_{\|\mathbf r\|_{\ell_p^N}\le1}\sup_{x\in\Omega^N}|\widetilde
f(\mathbf r,x)|. 
\end{equation*}
This definition recovers $\|f\|_{p,C_q}$ for the cyclic character
basis. With $\rho_{d,p}$ from \eqref{eq:rho}, the Hardy--Littlewood constant $%
\mathsf{H}_{d,p}(\Phi)$ is the least constant, uniform in $N$, for which 
\begin{equation*}
\left(\sum_{s(\alpha)=d}|a_\alpha|^{\rho_{d,p}}\right)^{1/\rho_{d,p}}  \le%
\mathsf{H}_{d,p}(\Phi)\|f\|_{p,\Phi}. 
\end{equation*}
Thus $\mathsf{H}_{d,p}(\Phi)$ is the supremum of the coefficient-to-norm
quotient over all dimensions and all nonzero \rev{homogeneous expansions
of order $d$}.
For the Boolean and cyclic bases, it is the Hardy--Littlewood constant for
the corresponding Fourier expansions.

We begin with the dimension-free Bohnenblust--Hille bound. Recall that $%
p_d=2d/(d+1)$. Let $\mathsf{B}_d^{\le}(\Omega,\Phi)$ be the least constant
for which 
\begin{equation*}
\left(\sum_{s(\alpha)\le d}|\widehat f_\Phi(\alpha)|^{p_d}\right)^{1/p_d} 
\le\mathsf{B}_d^{\le}(\Omega,\Phi)\|f\|_\infty 
\end{equation*}
holds for every $N\ge1$ and every $f:\Omega^N\to\mathbb{C}$ of interaction
order at most $d$. The constant $\mathsf{B}_d^{=}(\Omega,\Phi)$ is defined
by the same inequality restricted to \rev{homogeneous expansions of order
$d$}. Both
constants are uniform in $N$. For the cyclic character basis, we retain the
notation of \cite{PellegrinoRaposoCyclic}: 
\begin{equation*}
\mathrm{BH}^{\mathrm{int}}_{d,q}=\mathsf{B}_d^{\le}(C_q,\Phi). 
\end{equation*}
Set 
\begin{equation}  \label{eq:gamma-alpha-intro}
\gamma_2=\frac12,\qquad \gamma_q=\frac{q\log(q-1)}{4(q-2)}\quad(q\ge3),
\qquad \beta_q=\frac32+2\gamma_q.
\end{equation}

\begin{theoremA}
For every fixed $q\ge2$ there are $C(q),K(q)>0$ such that 
\begin{equation*}
\mathsf{B}_d^{\le}(\Omega,\Phi)  \le C(q)d^{\beta_q}(1+\log
d)^{K(q)}\qquad(d\ge1), 
\end{equation*}
uniformly over the orthonormal basis $\Phi$.
For the Boolean basis, one has the explicit bound
\begin{equation}
\label{eq:boolean-main-consequence}
\mathsf B_d^{\mathrm{Bool}}\le C d^{5/2}(1+\log d)^3.
\end{equation}
\end{theoremA}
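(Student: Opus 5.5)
We follow the weighted square-function scheme of \cite{PellegrinoTeixeiraBH}, as adapted to the discrete setting by \cite{Ivanisvili2026,SloteTsengVolberg}, and transfer each ingredient to a general local orthonormal system $\Phi$ on $(\Omega,\mu_q)$.

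\textbf{Step 1: reduction to homogeneous expansions.} We first reduce $\mathsf B_d^{\le}$ to the homogeneous constants $\mathsf B_k^{=}$, $k\le d$. The homogeneous part $f_k=\sum_{s(\alpha)=k}\widehat f_\Phi(\alpha)\Phi_\alpha$ is the Fourier coefficient at $t^k$ of the noise-type operator $T_t f=\sum t^{s(\alpha)}\widehat f_\Phi(\alpha)\Phi_\alpha$. Since $\varphi_0=1$, $T_t$ is a Markov averaging operator for $t\in[-\frac1{q-1},1]$. A Markov-brothers-type interpolation (Chebyshev nodes) then gives $\|f_k\|_\infty\le C(q)^{?}\|f\|_\infty$; with the standard choice this loses only a factor polynomial in $d$, absorbable after summing $k\le d$ with $p_d\ge p_k$. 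This also yields the uniformity in $\Phi$: all constants depend only on $q$, through $\sup_u\|\varphi_u\|_\infty\le\sqrt q$ and the hypercontractivity constant of the product semigroup. The latter is governed by the log-Sobolev constant of the complete graph on $q$ points, which is exactly where $\gamma_q=\frac{q\log(q-1)}{4(q-2)}$ enters.

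\textbf{Step 2: the homogeneous estimate.} For homogeneous $f$ of order $d$, we apply the Blei-type mixed inequality and bound
\[
\Bigl(\sum|a_\alpha|^{p_d}\Bigr)^{1/p_d}
\le\prod_{S}\Bigl(\sum_{j}\Bigl(\sum_{\alpha\ni j}|a_\alpha|^2\Bigr)^{1/2}\Bigr)^{\theta_S},
\]
using weighted interpolation with weights tuned as in \cite{PellegrinoTeixeiraBH}. To control the mixed $\ell_1(\ell_2)$ norms, we use a random partition of $[N]$ into blocks of size about $m\sim d/\log d$. On each block we freeze the coordinates outside it and extract the sections (the parts of degree $1$ in a block variable) by a discrete derivative $D_jf=\sum_{\alpha_j\ne0}a_\alpha\Phi_\alpha$, which is $\mathbb E$-contractive up to a factor $2$. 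Hypercontractivity with constant $(q-1)^{\ldots}$ per degree, combined with the Markov-type inequality of Step 1, converts the $L_2$ norms of these sections to $L_\infty$. This is the source of the factor $d^{2\gamma_q}$, while the interpolation and the partition averaging produce $d^{3/2}$ and the logarithmic factors.

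\textbf{Step 3: simultaneous section estimate and the Boolean bound.} The main obstacle is the simultaneous section estimate: we must bound $\sum_j\|D_jf\|$ for all sections at once, rather than losing a factor $d$ per section. Here we generalize the argument of \cite{SloteTsengVolberg} by replacing Rademacher signs with a random local "flip" given by a uniform random point of $\Omega$, averaged against $\overline{\varphi_u}$ for $u\ne0$. Orthonormality makes this average reproduce the block-linear part, at a cost of a factor $\sqrt{q-1}$ that depends only on $q$. Optimizing the block size yields an exponent $\beta_q=\tfrac32+2\gamma_q$ with $(\log d)^{K(q)}$. In the Boolean case $\gamma_2=\tfrac12$, which gives $d^{5/2}$; tracking the logarithms (one from the partition, one from the degree reduction, one from interpolation) gives $(1+\log d)^3$.
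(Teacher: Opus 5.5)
Your Step~1 is false, and the remaining steps do not supply the argument that actually yields a polynomial bound. Extracting a single level $f_k$ in supremum norm is exponentially expensive, not polynomially. For $q=2$, $N\gg d$ and $f(x)=T_d\bigl(\frac1N\sum_{j=1}^Nx_j\bigr)$ with $T_d$ the Chebyshev polynomial, one has $\|f\|_\infty\le1$. On the other hand, $f_k(1,\ldots,1)=c_k+O_d(1/N)$, where $c_k$ is the coefficient of $t^k$ in $T_d$, because index tuples with repetitions contribute only $O_d(1/N)$. Since $\sum_k|c_k|=|T_d(i)|$, one gets $\max_k|c_k|\ge((1+\sqrt2)^d-1)/(2d+2)$. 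Interior Bernstein--Markov estimates on $[-1/(q-1),1]$ lose only polynomially for bounded $k$. The paper uses such an estimate once, for $k=1$ in Proposition~\ref{prop:fixed-local-levels}, at cost $M$. It never reduces $\mathsf B_d^{\le}$ to homogeneous constants. Instead it controls all levels simultaneously through $\sum_{r\le M}\mathcal A_r(f)^2/W_q(r)^2$, and $\mathsf B_d^{=}$ then follows by restriction, which is the opposite direction to yours.

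Your Steps~2--3 use mixed norms with outer $\ell_1$ over single coordinates, as in your display. These require an $L_1\to L_2$ conversion on sections, whose cost is exponential in their level; this is the factor $e^{2\gamma_q(d-1)}$ in Lemma~\ref{lem:mixed-one-block}. For sections of level about $d/\log d$ the cost is $e^{2\gamma_q d/\log d}$, which is not a power of $d$.

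The paper instead uses $L$-block mixed norms $X_j$ with outer exponent $p_{s_j}$. Potts hypercontractivity then costs only $\sigma_{s_j}^{-d_j}\approx e^{2\gamma_q d_j/s_j}$, which is bounded since $d_j/s_j\approx1/(L-1)$. The argument is closed by a self-improving inequality. With $\Gamma^{(q)}_{M,N}$ the optimal weighted constant, Lemma~\ref{lem:simultaneous-local-sections} bounds the $X_j$ by $\Gamma^{(q)}_{M,N}$ itself. This is why that lemma keeps the full complementary spectrum $\sum_t\sigma_s^{2t}Y_{s,t}^2$ of the sections $Q(\cdot,y)$, rather than the linear sections $D_jf$ of your Step~3. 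The Blei inequality, random partitions and \eqref{eq:local-entropy-sum} then give $(\Gamma^{(q)}_{M,N})^2\le C(q)M^3L^{2A_q}+\kappa_{q,M}^2(\Gamma^{(q)}_{M,N})^2$. The contraction $\kappa_{q,M}\le e^{-c_q/L^2}$ holds for the critical weight $W_q(r)=r^{2\gamma_q}(1+\log r)^{K_q}$ with $L\asymp\log M$, because the $O(1/L)$ terms cancel exactly; see \eqref{eq:critical-leading-cancellation}.

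This is where the exponents come from. The factor $d^{3/2}$ comes from the low-level base estimates: Bernstein at level one, and $\|P_1g\|_2\le C(q)\sqrt M\|g\|_1$. The factor $d^{2\gamma_q}$ is $W_q(d)$. For $q=2$ the three logarithms are $K_2=1$, $A_2=4(5/4-1)=1$, and the factor $L$ coming from $(1-e^{-2c_2/L^2})^{-1/2}$. Your exponent and logarithm bookkeeping is asserted rather than derived, and it does not correspond to any step that would produce it.
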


Consequently, for every $\varepsilon>0$,
\[
\mathsf B_d^{\mathrm{Bool}}
=
O_\varepsilon(d^{5/2+\varepsilon}).
\]
\rev{The corresponding estimate in \cite{SloteTsengVolberg} is recorded
in \eqref{eq:intro-STV}.}

For the cyclic character basis,
\begin{equation}
\mathrm{BH}^{\mathrm{int}}_{d,q}
\le
C(q)d^{\,3/2+2\gamma_q}(1+\log d)^{K(q)}.
\label{eq:cyclic-main-consequence}
\end{equation}
\rev{For example, when $q=3$, the polynomial power in
\eqref{eq:cyclic-main-consequence} is $2.5397\ldots$.
The earlier estimate \eqref{eq:intro-SV-cyclic}, from
\cite{SloteVolbergPolynomial}, has power $5+4\gamma_q$.} The estimate is stated for interaction
order, but it also applies to the canonical total-degree class because
$s(\alpha)\le|\alpha|$. The same bound holds for
$\mathsf B_d^{=}(\Omega,\Phi)$ by restriction.
We now estimate the Hardy--Littlewood constant $\mathsf{H}_{d,p}(\Phi)$,
uniformly in the number of coordinates. Set 
\begin{equation}  \label{eq:Lambda-intro}
\Lambda_{d,q}=\pi\sqrt{q-1}\sqrt d\,(2d-1)\rev{e^{2\gamma_q(d-1)}}.
\end{equation}

\begin{theoremB}
For $q\ge2$, $d\ge2$ and $2d\le p\le\infty$, with $\tau=2d/p$
(and $\tau=0$ when $p=\infty$), 
\begin{equation*}
\mathsf{H}_{d,p}(\Phi)  \le\mathsf{B}_d^{=}(\Omega,\Phi)^{1-\tau}%
\Lambda_{d,q}^{\tau}. 
\end{equation*}
\end{theoremB}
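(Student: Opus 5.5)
This is an interpolation between the two endpoints $p=\infty$ ($\tau=0$, exponent $p_d$) and $p=2d$ ($\tau=1$, exponent $\rho_{d,2d}=2$). The plan is to prove both endpoint estimates and then interpolate the coefficient exponent by H\"older's inequality. The precise form is the one used for the complex polynomial Hardy--Littlewood inequality \cite{AlbuquerqueBayartPellegrinoSeoane,DimantSevillaPeris}. Note that
\[
\frac{1}{\rho_{d,p}}=\frac{dp+p-2d}{2dp}=\frac{1-\tau}{p_d}+\frac{\tau}{2}.
\]
Hence, by H\"older,
\[
\|a\|_{\rho_{d,p}}\le\|a\|_{p_d}^{1-\tau}\|a\|_2^{\tau}.
\]
However, the norms on the right are controlled by $\|f\|_{\infty}$ and $\|f\|_{2d,\Phi}$, not by $\|f\|_{p,\Phi}$. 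So I will instead rescale the radial variables.

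\textbf{Step 1: the case $p=\infty$.} Fix $\mathbf r$ with $\|\mathbf r\|_\infty\le1$. Consider $g_{\mathbf r}(x)=\widetilde f(\mathbf r,x)$. Its coefficients are $a_\alpha\prod_{j\in\operatorname{supp}\alpha}r_j$, it is homogeneous of order $d$, and $\|g_{\mathbf r}\|_\infty\le\|f\|_{\infty,\Phi}$. Taking $\mathbf r=(1,\dots,1)$ gives $\|a\|_{p_d}\le\mathsf B_d^{=}\|f\|_{\infty,\Phi}$.

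\textbf{Step 2: the case $p=2d$.} Here I need $\|a\|_2\le\Lambda_{d,q}\|f\|_{2d,\Phi}$. The plan is as follows.
\begin{enumerate}
\item By orthonormality, $\|a\|_2^2=\|f\|_{L_2(\mu_q^N)}$-squared.
\item By the hypercontractive (Potts/Bonami) estimate for homogeneous order-$d$ expansions, compare $L_2$ with $L_1$ at a cost $e^{2\gamma_q d}$-type. Up to the $\sqrt{q-1}$ factor from changing local basis, this is the $e^{2\gamma_q(d-1)}$ appearing in $\Lambda_{d,q}$.
\item Reduce to a multilinear form $L(\mathbf r^{(1)},\dots,\mathbf r^{(d)})$ in the radial variables, via polarization of the $d$-homogeneous map $\mathbf r\mapsto\widetilde f(\mathbf r,x)$.
\item For a $d$-linear form, use the classical $\ell_2$ estimate with $p=2d$: inclusion $\ell_{2d}$ in one slot after H\"older, a Khintchine/Steinhaus averaging in each variable, and a polarization constant. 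I expect these to produce the factors $\pi$, $\sqrt d$ and $(2d-1)$.
\end{enumerate}
Concretely, I would average $|\widetilde f(\mathbf r,x)|$ over $x$ with $\mathbf r=(N^{-1/2d}\varepsilon_j)$, using random Steinhaus $\varepsilon$. This captures the $\ell_2$ sum of $\prod|r_j|$-weighted coefficients. I would then remove the weights with a Minkowski-type reduction. The reduction I intend is the standard one: the sum over $j$ of the one-coordinate sections in $\ell_2$ is bounded using $\ell_{2d}\to\ell_2$ duality, with exponent $(2d)^*$ and the $\ell_{2d}$ unit ball.

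\textbf{Step 3: interpolation for $2d<p<\infty$.} For the general case I will follow the Albuquerque--Bayart--Pellegrino--Seoane scheme. Write $\mathbf r=\mathbf s\cdot\mathbf t$ with $\|\mathbf s\|_{2d}\le1$ and $\|\mathbf t\|_{\infty}$, or use a mixed H\"older split of the multi-indices. Apply the $p=2d$ and $p=\infty$ estimates to suitably reweighted homogeneous expansions $g$ (coefficients $a_\alpha\prod r_j$). The key identity making the split work is $\ell_p=[\ell_\infty,\ell_{2d}]_\tau$ at the level of the radial vector. Since the radial dependence is genuinely polynomial (homogeneous in $\mathbf r$ of degree $d$) and the $x$-supremum commutes with everything, I can apply the known abstract interpolation lemma for Hardy--Littlewood coefficient inequalities of homogeneous polynomials: a bound at $\infty$ with constant $A$ and at $2d$ with constant $B$ gives a bound at $p$ with constant $A^{1-\tau}B^\tau$. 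This is done coordinatewise, with blocks of H\"older exponents, as in Dimant--Sevilla-Peris.

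\textbf{Main obstacle.} I expect Step 2 to be hardest: obtaining exactly $\Lambda_{d,q}$, i.e.\ the $e^{2\gamma_q(d-1)}$ hypercontractivity factor for a general orthonormal $\Phi$. My first attempt will apply the hypercontractive $L_2$--$L_1$ comparison in $d-1$ variables (only $d-1$ factors appear). The last variable will be handled by Cauchy--Schwarz against $\ell_{2d}$.
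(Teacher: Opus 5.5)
There is a genuine gap in Step~3, and that step carries the content of Theorem~B. You invoke an ``abstract interpolation lemma'' that turns a constant $A$ at $p=\infty$ and a constant $B$ at $p=2d$ into $A^{1-\tau}B^{\tau}$ at $p$. You neither state nor prove it, and the mechanisms you sketch cannot produce it.

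The factorization $\mathbf r=\mathbf s\cdot\mathbf t$ with $\|\mathbf s\|_{2d}\le1$ fails because $B_{\ell_{2d}}\subseteq B_{\ell_p}$ for $p\ge2d$. Writing the $\ell_p$-unit vector $N^{-1/p}(1,\ldots,1)$ in this form forces $\|\mathbf t\|_\infty\ge N^{1/(2d)-1/p}$, which is unbounded for $p>2d$.

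The paper instead rescales with coefficient-dependent weights. Let $A=\|a\|_{\rho}$, $\mu_\alpha=|a_\alpha|^\rho/A^\rho$, and let $t_j=\frac1d\sum_{\alpha\ni j}\mu_\alpha$, which is a probability vector. The function $Q$ with coefficients $b_\alpha=a_\alpha\prod_{j\in\operatorname{supp}\alpha}t_j^{1/p}$ then satisfies $\|Q\|_\infty\le\|f\|_{p,\Phi}$. Jensen's inequality gives two lower bounds, with $I=dH(t)-H(\mu)$:
\begin{itemize}
\item $\|\widehat Q_\Phi\|_{p_d}\ge Ae^{-I/p}$, which uses $1/\rho=1/p_d-1/p$ rather than your H\"older identity;
\item $S_\Phi(Q)/\sqrt d\ge Ae^{(1/(2d)-1/p)I}$.
\end{itemize}
The entropy terms cancel under the powers $1-\tau$ and $\tau$.

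The second quantity must be the square function $S_\Phi(Q)=\sum_j(\sum_{\alpha\ni j}|b_\alpha|^2)^{1/2}$, not $\|b\|_2$, which is all an $\ell_2$ endpoint controls. Running the same computation with $\|b\|_2$ leaves an uncancelled factor $e^{(d/p)H(t)}$. This loss is real: for $\sum_{k\le M}\prod_{j\in A_k}\phi_{u_0}(x_j)$ one has $t_j=1/N$ and $A=N^{d/p}\|\widehat Q_\Phi\|_{p_d}^{1-\tau}\|b\|_2^{\tau}$. The $\ell_1$-sum over coordinates in $S_\Phi$ is what supplies the compensating $H(t)$.

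Consequently, Step~2 should not aim at $\|a\|_2\le\Lambda_{d,q}\|f\|_{2d,\Phi}$, which is the output of the argument at $\tau=1$. It must deliver the dimension-free sup-norm bound $S_\Phi(g)\le\pi\sqrt{q-1}\,d(2d-1)e^{2\gamma_q(d-1)}\|g\|_\infty$ on $\Omega^N$, i.e.\ $S_\Phi(g)/\sqrt d\le\Lambda_{d,q}\|g\|_\infty$. Your Step~2 does not reach a dimension-free estimate. Steinhaus averaging with $\mathbf r=N^{-1/(2d)}\varepsilon$ yields only $\|a\|_2\le\sqrt N\|f\|_{2d,\Phi}$, and the ``Minkowski-type reduction'' is unspecified.

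In the paper the bound is obtained as follows.
\begin{itemize}
\item One bounds $S_\Phi(g)\le\sum_{j,u}\|g_{j,u}\|_2$ using the order-$(d-1)$ sections $g_{j,u}$.
\item The Potts estimate gives $\|g_{j,u}\|_2\le e^{2\gamma_q(d-1)}\|g_{j,u}\|_1$. This reduces matters to bounding $\sum_{j,u}|g_{j,u}(y)|$ pointwise in $y$.
\item Averaging $g$ against $\bigotimes_k(t\delta_{x_k}+(1-t)\delta_{y_k})$ shows that $h_y(x)=L_g(\zeta_\Phi(x),\zeta_\Phi(y)^{d-1})$ is the $m=1$ Bernstein coefficient of a polynomial in $t$ bounded by $\|g\|_\infty$ on $[0,1]$. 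Lubinsky--Ziegler then gives $\|h_y\|_\infty\le(2d-1)\|g\|_\infty$.
\item Since $h_y$ has coefficients $g_{j,u}(y)/d$, the phase-averaging lemma with Cauchy--Schwarz over the $q-1$ local indices gives $\sum_{j,u}|g_{j,u}(y)|\le\pi\sqrt{q-1}\,d\,\|h_y\|_\infty$. This Cauchy--Schwarz step, not a change of basis, is the source of $\sqrt{q-1}$.
\end{itemize}

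Your Step~1 is fine.
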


\Needspace{9\baselineskip}

\begin{corollary}
\label{cor:HL-regimes} Fix $q\ge2$. Uniformly over $\Phi$, the following
hold as $d\to\infty$:

\begin{enumerate}
\item if $p(d)/d\to\infty$, then $\mathsf{H}_{d,p(d)}(\Phi)=\exp(o(d))$;

\item if $p(d)\ge cd^2/\log d$ for a fixed $c>0$, then 
\begin{equation*}
\mathsf{H}_{d,p(d)}(\Phi)\le C(q,c)  d^{\beta_q+\rev{4\gamma_q/c}}(1+\log
d)^{K(q)}; 
\end{equation*}

\item if $p(d)/d^2\to\infty$, then $\mathsf{H}_{d,p(d)}(\Phi)\le(1+o(1))%
\mathsf{B}_d^{=}(\Omega,\Phi)$.
\end{enumerate}
\end{corollary}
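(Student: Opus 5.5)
The corollary follows from Theorem~B together with Theorem~A (applied to $\mathsf B_d^{=}\le\mathsf B_d^{\le}$); in each regime we only need to control the factor $(\Lambda_{d,q}/\mathsf B_d^{=})^{\tau}$ with $\tau=2d/p(d)$. We also need a lower bound $\mathsf B_d^{=}(\Omega,\Phi)\ge1$, which holds because a single product function $\Phi_\alpha$ with $s(\alpha)=d$ gives coefficient norm $1$ and sup norm $\|\Phi_\alpha\|_\infty\ge\|\Phi_\alpha\|_{L_2}=1$. Since $\mathsf B_d^{=}\ge 1$ and the bound of Theorem~B is a weighted geometric mean, we may write
\begin{equation*}
\mathsf H_{d,p}(\Phi)\le \mathsf B_d^{=}\Bigl(\frac{\Lambda_{d,q}}{\mathsf B_d^{=}}\Bigr)^{\tau}\le \mathsf B_d^{=}\,\max(1,\Lambda_{d,q})^{\tau},
\end{equation*}
and $\Lambda_{d,q}\ge1$ for $d\ge2$. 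Since $\log\Lambda_{d,q}=2\gamma_q(d-1)+\tfrac32\log d+O_q(1)$, everything reduces to bounding $\tau\log\Lambda_{d,q}$. For $p\le 2d$ the statement is vacuous in these regimes, because $p(d)/d\to\infty$ eventually forces $p\ge 2d$.

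For (1): $\tau\log\Lambda_{d,q}\le (2d/p)(2\gamma_q d+O(\log d))=o(d)$, since $d/p\to0$. Also $\log\mathsf B_d^{=}=O_q(\log d)=o(d)$ by Theorem~A. Hence $\mathsf H_{d,p(d)}=\exp(o(d))$, uniformly in $\Phi$ because Theorem~A is uniform.

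For (2): with $p\ge cd^2/\log d$ we have $\tau\le 2\log d/(cd)$. Then
\begin{equation*}
\tau\log\Lambda_{d,q}\le \frac{2\log d}{cd}\bigl(2\gamma_q d+\tfrac32\log d+O_q(1)\bigr)=\frac{4\gamma_q}{c}\log d+O_{q,c}(1),
\end{equation*}
because $(\log d)^2/d$ is bounded. Thus $\Lambda_{d,q}^{\tau}\le C(q,c)d^{4\gamma_q/c}$, and multiplying by the bound $C(q)d^{\beta_q}(1+\log d)^{K(q)}$ from Theorem~A gives the claim. Small $d$, where $\log d$ might be $0$, are absorbed into the constant.

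For (3): $\tau\log\Lambda_{d,q}\le (2d/p)\cdot O_q(d)=O_q(d^2/p)\to0$, so $\Lambda_{d,q}^\tau=1+o(1)$ and $\mathsf H_{d,p}\le(1+o(1))\mathsf B_d^{=}$. The $o(1)$ depends only on $q$ and the sequence $p(d)$, not on $\Phi$.

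The only delicate point is the step that discards the denominator $(\mathsf B_d^{=})^{\tau}$. This needs the lower bound $\mathsf B_d^{=}\ge1$. Otherwise we would need a lower bound on $\mathsf B$ in part (3) in order to keep the factor $1+o(1)$ multiplicative.
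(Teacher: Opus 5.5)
Your route is the paper's: Theorem~B combined with Theorem~A, the expansion $\log\Lambda_{d,q}=2\gamma_q(d-1)+\tfrac32\log d+O_q(1)$, and the lower bound $\mathsf B_d^{=}(\Omega,\Phi)\ge1$ to discard the factor $(\mathsf B_d^{=})^{-\tau}$. Your estimates of $\tau\log\Lambda_{d,q}$ in the three regimes are correct. The gap is your justification of $\mathsf B_d^{=}\ge1$, which runs the wrong way. A test function $f$ only gives $\mathsf B_d^{=}\ge\mathcal A_d(f)/\|f\|_\infty$. For $f=\Phi_\alpha$ this is $1/\|\Phi_\alpha\|_\infty$, and your inequality $\|\Phi_\alpha\|_\infty\ge\|\Phi_\alpha\|_2=1$ shows this is at most $1$. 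It equals $1$ only when the local functions are unimodular (Boolean, cyclic). For a general orthonormal basis it fails: for $q=3$ and $\varphi_1=\sqrt{3/2}\,(1,-1,0)$, taking $\alpha_j=1$ on $d$ coordinates gives $\|\Phi_\alpha\|_\infty=(3/2)^{d/2}$. The corollary is uniform over all $\Phi$, so this matters exactly in part~(3), which you yourself single out as needing the bound.

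The paper's repair (Lemma~\ref{lem:local-lower}) tests with a unimodular mean-zero function instead. Label the points $0,\ldots,q-1$, put $h(k)=e^{2\pi ik/q}$, and write $h=\sum_{u\ge1}c_u\varphi_u$ with $\|c\|_{\ell_2}=1$. Then $f(x)=\prod_{j=1}^dh(x_j)$ has $\|f\|_\infty=1$, and its coefficient array is $c^{\otimes d}$, with $\ell_{p_d}$ norm $\|c\|_{p_d}^d\ge\|c\|_2^d=1$.

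Alternatively, your own test function gives the correct-direction bound $\mathsf B_d^{=}\ge q^{-d/2}$, because $\|\varphi_u\|_\infty\le\sqrt q$. This already suffices. In (3), $(\mathsf B_d^{=})^{-\tau}\le q^{d^2/p}\to1$. Parts (1) and (2) need no lower bound at all, since $(\mathsf B_d^{=})^{1-\tau}\le\max\{1,\mathsf B_d^{=}\}$.

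Finally, in (1) you prove only $\log\mathsf H_{d,p(d)}\le o(d)$. The two-sided assertion $\mathsf H_{d,p(d)}=\exp(o(d))$ also needs a lower bound such as $\mathsf H_{d,p}\ge1$. The paper gets this from the same unimodular product: $\|f\|_{p,\Phi}=d^{-d/p}$, so $\mathsf H_{d,p}\ge d^{d/p}$. Your $\Phi_\alpha$ would only give $\mathsf H_{d,p}\ge d^{d/p}q^{-d/2}$, which is not enough.
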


For $p=\infty$, the radial norm satisfies $\|f\|_\infty\le\|f\|_{\infty,\Phi}
$, and the inequality can be strict. On the Boolean cube, for example, 
\begin{equation*}
f(x)=x_1x_2+x_1x_3+i x_2x_3,  \qquad \|f\|_\infty=\sqrt5,\qquad
\|f\|_{\infty,\Phi}=3. 
\end{equation*}
Indeed, the triangle inequality bounds the radial norm by $3$, attained with 
$x=(1,1,1)$ and $\mathbf r=(e^{i\pi/4},e^{-i\pi/4},e^{-i\pi/4})$. The comparison in Corollary~\ref{cor:HL-regimes}
therefore does not identify the radial norm with the supremum norm on
the finite product space.

In Theorem~C the local functions need only be linearly independent.

Let $\nu\ge1$ and let
\[
\Phi=(\phi_0,\ldots,\phi_\nu)
\]
be a linearly independent family of functions on a finite set $\Omega$,
with $\phi_0\equiv1$. Define the radial norm and
$\mathsf{H}_{d,p}(\Phi)$ by the same formulas as above, with indices in
$\{0,\ldots,\nu\}^N$. Let $E_\Phi$ be the normed space on
$\mathbb C^\nu$ whose unit ball is
\[
B_{E_\Phi}
=
\operatorname{absconv}
\{
(\phi_1(x),\ldots,\phi_\nu(x)):
x\in\Omega
\},
\]
and set
\[
\kappa_{p,\Phi}
=
\|\operatorname{id}:\ell_p^\nu\to E_\Phi\|.
\]
Here $\operatorname{absconv}$ denotes the complex absolute convex hull.
Linear independence implies that the evaluation vectors span
$\mathbb C^\nu$, so the displayed set is indeed the unit ball of a norm.

Write $\mathsf{H}^{\mathrm{pol}}_{d,p}$ for the optimal constant in the
classical complex $d$-homogeneous polynomial Hardy--Littlewood
inequality on $\ell_p$.

\begin{theoremC}
Let $d\ge1$ and $d<p\le\infty$. For every finite linearly independent local
family $\Phi=(\phi_0,\ldots,\phi_\nu)$ with \rev{$\nu\ge1$ and} $\phi_0\equiv1$,
\begin{equation*}
\mathsf{H}_{d,p}(\Phi)
\le
\kappa_{p,\Phi}^{d}
\mathsf{H}^{\mathrm{pol}}_{d,p}.
\end{equation*}
Moreover, $\rho_{d,p}$ is optimal among dimension-free coefficient
exponents for every such local family.
\end{theoremC}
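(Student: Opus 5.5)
We prove the two assertions separately: first a transfer from $f$ to a classical polynomial, then explicit examples for optimality.

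\emph{Upper bound.} Let $f=\sum_{s(\alpha)=d}a_\alpha\Phi_\alpha$ be a homogeneous expansion of order $d$, with $\alpha\in\{0,\ldots,\nu\}^N$. We attach to it a classical polynomial in the $N\nu$ complex variables $y=(y_{j,u})_{1\le j\le N,\,1\le u\le\nu}$:
\[
P_f(y)=\sum_{s(\alpha)=d}a_\alpha\prod_{j\in\operatorname{supp}\alpha}y_{j,\alpha_j}.
\]
$P_f$ is $d$-homogeneous, and distinct $\alpha$ give distinct monomials. Hence its coefficient family is exactly $(a_\alpha)$, and the classical inequality gives
\[
\Bigl(\sum|a_\alpha|^{\rho_{d,p}}\Bigr)^{1/\rho_{d,p}}\le\mathsf H^{\mathrm{pol}}_{d,p}\sup_{\|y\|_{\ell_p^{N\nu}}\le1}|P_f(y)|.
\]
It therefore suffices to prove $|P_f(y)|\le\kappa_{p,\Phi}^d\|f\|_{p,\Phi}$ whenever $\|y\|_p\le1$.

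Write $y_j=(y_{j,1},\ldots,y_{j,\nu})$ and $r_j=\|y_j\|_{\ell_p^\nu}$, so $\|\mathbf r\|_{\ell_p^N}\le1$. Each monomial has degree $d$, so $P_f(y)=\kappa^dP_f(y/\kappa)$ with $\kappa=\kappa_{p,\Phi}$. By the definition of $\kappa$, each block $y_j/\kappa$ lies in $r_j B_{E_\Phi}$. Every element of $B_{E_\Phi}$ is a convex combination of points $t\,v(x)$, where $|t|\le1$, $x\in\Omega$ and $v(x)=(\phi_1(x),\ldots,\phi_\nu(x))$.

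We now replace the blocks one at a time. For fixed $j$, $P_f$ is affine in the block $y_j$: each monomial contains at most one variable from block $j$. Hence $|P_f|$ is convex in $y_j$, and its maximum over the convex set $r_jB_{E_\Phi}$ is attained at a point $r_jt\,v(x_j)$. Substituting $y_j=r_jt\,v(x_j)$ reproduces exactly the terms of $\widetilde f$ at coordinate $j$, with the complex radial value $tr_j$ and evaluation point $x_j$; when $t=0$ it reproduces the terms not involving $j$.

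After treating all $N$ blocks, $|P_f(y/\kappa)|$ is bounded by $|\widetilde f(\mathbf r',x)|$ for some $x\in\Omega^N$ and some $\mathbf r'$ with $|r'_j|\le r_j$, so $\|\mathbf r'\|_p\le1$. Therefore $|P_f(y/\kappa)|\le\|f\|_{p,\Phi}$, which gives the bound. The point that needs care is that the radial variables are complex: this is what lets the absolute convex hull, rather than only the convex hull, be reached.

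\emph{Optimality.} We use only $\phi_1$. Put $M=\|\phi_1\|_\infty$, which is positive since $\phi_1\ne0$. For a tetrahedral $d$-homogeneous polynomial $Q(z)=\sum_{|A|=d}c_A\prod_{j\in A}z_j$, let
\[
f_Q=\sum_{|A|=d}c_A\prod_{j\in A}\phi_1(x_j).
\]
Then $\|f_Q\|_{p,\Phi}\le M^d\sup_{\|z\|_p\le1}|Q(z)|$, because $\prod_{j\in A} r_j\phi_1(x_j)$ equals $\prod_{j\in A} z_j$ with $|z_j|\le M|r_j|$. So it suffices to show that tetrahedral polynomials already force $\rho\ge\rho_{d,p}$ as $n\to\infty$.

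For $d<p\le2d$ we take disjoint blocks, $Q=\sum_{k=1}^n z_{k,1}\cdots z_{k,d}$. By Hölder and AM--GM, $\sup_{\|z\|_p\le1}|Q|\le n^{1-d/p}$, while the coefficient norm is $n^{1/\rho}$. This forces $1/\rho\le 1-d/p$, i.e. $\rho\ge p/(p-d)$.

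For $2d\le p\le\infty$ we take a Kahane--Salem--Zygmund tetrahedral polynomial on $n$ variables with unimodular coefficients on all $\binom nd$ sets $A$. Its norm satisfies $\sup_{\|z\|_p\le1}|Q|\le C_d\sqrt{\log n}\;n^{\frac{d+1}{2}-\frac dp}$, while the coefficient norm is $\asymp n^{d/\rho}$. Letting $n\to\infty$ gives $d/\rho\le\frac{d+1}2-\frac dp$, i.e. $\rho\ge\frac{2dp}{dp+p-2d}$, and $\rho\ge\frac{2d}{d+1}$ when $p=\infty$.

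The main obstacle is this $\ell_p$ Kahane--Salem--Zygmund bound for tetrahedral polynomials. I would derive it from the multilinear/polynomial KSZ inequality on $\ell_\infty^n$ and interpolate to $\ell_p$ via $\|z\|_{\ell_\infty}\le\|z\|_p$ together with a dyadic level-set decomposition. Alternatively, I would cite the tetrahedral versions used for the classical optimality proofs in \cite{AlbuquerqueBayartPellegrinoSeoane,DimantSevillaPeris}, checking that their random examples are multi-affine.
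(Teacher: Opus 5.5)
Your proposal is correct and is essentially the paper's proof. The upper bound is Proposition~\ref{prop:geometry} combined with Lemma~\ref{lem:comparison}: the same block-by-block affine reduction over the absolute convex hull, where complex radial factors reach the absolute convex hull. The optimality argument uses the same reduction to one nonconstant local function and the same test polynomials, namely disjoint blocks for $d<p\le2d$ and Kahane--Salem--Zygmund polynomials for $p\ge2d$ (Lemmas~\ref{lem:squarefree}--\ref{lem:KSZ}).

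For the $\ell_p$ Kahane--Salem--Zygmund bound, take your citation route. The paper does this via Corollary~6.15 of \cite{DefantMastylo}, combined with $d!\sum_{|A|=d}|z_A|^2\le\|z\|_2^{2d}$ and $\|z\|_2\le N^{1/2-1/p}\|z\|_p$. Your alternative, transferring the $\ell_\infty^n$ bound of a fixed polynomial through dyadic level sets, does not work as a black box. Restricting that polynomial to $m$ coordinates gives nothing better than the global $n^{(d+1)/2}$ or the trivial coefficient bound, whereas the level-set argument needs roughly $m^{(d+1)/2}$ on every such set. You would therefore have to redo the random construction with a union bound over supports.
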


The constant $\kappa_{p,\Phi}$ depends only on the local family.
For optimality of the exponent, it suffices to consider square-free
polynomials involving one fixed nonconstant local function.

Theorems~A, B and C also apply to the six-point system in
Remark~\ref{rem:Tao-system}, whose orthonormal basis is not a cyclic
character table.

The proofs use the following estimates.

\begin{itemize}
\item \textbf{Potts hypercontractivity and independence of the local basis.}
\lastrev{On the Boolean cube we use the Bonami--Beckner hypercontractive inequality} \cite%
{Beckner,Bonami}. \attribrev{For a uniform $q$-point space, we use the Potts
\lastrev{hypercontractive estimate developed for cyclic coefficient inequalities in}
\cite{PellegrinoRaposoCyclic}, based on the logarithmic Sobolev method of
\cite{Gross} and the complete-graph inequality of \cite{DiaconisSaloffCoste}.
It gives the radius $(p-1)^{\gamma_q}$.} Since the Potts operator acts as the identity on constants and as a scalar
\lastrev{on their orthogonal complement, this estimate is independent of the local
orthonormal basis.}

\introrev{\item \textbf{Weighted coefficient estimates (Theorem~A).}
The weighted square-sum method of \cite{PellegrinoTeixeiraBH} is used in
the Boolean setting in \cite{Ivanisvili2026,SloteTsengVolberg} and in the
cyclic setting in \cite{SloteVolbergPolynomial}. We extend
\cite[Lemma~5.2]{SloteTsengVolberg} to local orthonormal systems.
For a fixed number $L$ of coordinate blocks, the admissible weight powers
satisfy $B>b_{q,L}$, with $b_{q,L}\downarrow2\gamma_q$ as $L\to\infty$.
We take $L$ of order $\log M$, where $M$ bounds the interaction order,
and use
\[
W_q(r)=r^{2\gamma_q}(1+\log r)^{K_q}.
\]
The logarithmic factor absorbs the remaining error in the contraction
estimate. The three-block argument gives the bound
$D_{q,3}M^{3/2}\|f\|_\infty$ at level $3$; the induction in
Proposition~\ref{prop:fixed-local-levels} retains the same power
at every fixed level. This gives Theorem~A; see Section~\ref{sec:proof-A}.}

\item \textbf{Rescaling and entropy cancellation (Theorem~B).} The
Hardy--Littlewood transfer uses the coefficient-dependent rescaling and
entropy cancellation of \cite[Section~3.2]{PellegrinoTeixeiraHL}. We
apply these steps to \rev{homogeneous expansions}, using a local square-function
estimate obtained from Potts hypercontractivity and the coefficient bounds
of \cite{LubinskyZiegler}.

\item \textbf{Convex geometry and optimal exponents (Theorem~C).} Theorem~C
uses the absolute convex hull of the local evaluation vectors to reduce the
radial norm to a polynomial norm on a sum of finite-dimensional spaces.
Classical polynomial Hardy--Littlewood estimates and the
Kahane--Salem--Zygmund inequality \cite{DefantMastylo,DimantSevillaPeris}
then give the coefficient exponents and their optimality.
\end{itemize}

\introrev{
All logarithms are natural. We use $\mathbb N=\{1,2,\ldots\}$ and
$1/p+1/p'=1$ for $1\le p\le\infty$, with $1/\infty=0$.
The local product spaces carry normalized counting measure.
}

\begin{table}[htbp]
\caption{Principal notation.}\centering
\begin{tabular}{@{}p{3.1cm}p{11.1cm}}
\toprule Symbol & Meaning \\ 
\midrule $d$, $N$, $q$ & Interaction bound, number of coordinates, and
number of local points. \\ 
$p_r=2r/(r+1)$ & Bohnenblust--Hille coefficient exponent. \\ 
$\Omega$, $\mu_q$ & Local $q$-point space and its uniform probability
measure. \\ 
$\Phi$, $\Phi_\alpha$ & Local family and its product functions; $\Phi$ is
orthonormal in Theorems A and B. \\ 
$\func{supp}(\alpha)$, $s(\alpha)$ & Active coordinate set and its
cardinality. \\ 
$\mathbf r=(r_j)$ & Complex radial vector in the norm $\|f\|_{p,\Phi}$. \\ 
$\rho_{d,p}$ & Hardy--Littlewood coefficient exponent in \eqref{eq:rho}. \\ 
$\mathsf{B}_d^{\le}$, $\mathsf{B}_d^{=}$ & \rev{Optimal constants for at most
$d$ or exactly $d$ coordinates per term.} \\ 
$\mathrm{BH}^{\mathrm{int}}_{d,q}$ & The cyclic specialization of $\mathsf{B}%
_d^{\le}$. \\ 
$\mathsf{H}_{d,p}(\Phi)$ & Optimal dimension-free Hardy--Littlewood constant
for the expansion in $\Phi$. \\ 
$\gamma_q$, $\beta_q$ & Potts exponent and polynomial exponent $3/2+2\gamma_q
$. \\ 
\bottomrule
\end{tabular}%
\end{table}

\clearpage
\section*{\introrev{Proof dependencies}}
\begin{figure}[H]
\centering
\begin{tikzpicture}[
  every node/.style={align=center,font=\small,text=black,inner sep=3pt},
  >={Stealth[length=2mm]},
  dep/.style={->,draw=black,line width=.4pt}]
\node (a0) at (0,0) {Potts hypercontractivity and mixed-norm interpolation\\
Lemmas~\ref{lem:potts-hc-general} and~\ref{lem:local-complement-interpolation}};
\node (a1) at (-3.8,-1.55) {Fixed interaction levels\\Proposition~\ref{prop:fixed-local-levels}};
\node (a2) at (3.8,-1.55) {Simultaneous section estimate\\Lemma~\ref{lem:simultaneous-local-sections}};
\node (a3) at (0,-3.05) {Fixed-block weighted estimate\\Proposition~\ref{prop:fixed-block-local}};
\node (a4) at (0,-4.65) {Critical weight and growing number of blocks\\
Section~\ref{sec:critical-weight}; estimate~\eqref{eq:critical-local-Gamma-final}};
\node[font=\small\bfseries] (a5) at (0,-6.05) {Theorem A};
\draw[dep] (a0.south) -- (a1.north);
\draw[dep] (a0.south) -- (a2.north);
\draw[dep] (a1.south) -- (a3.north);
\draw[dep] (a2.south) -- (a3.north);
\draw[dep] (a3.south) -- (a4.north);
\draw[dep] (a2.east) to[out=0,in=0,looseness=1.1] (a4.east);
\draw[dep] (a4.south) -- (a5.north);
\end{tikzpicture}

\vspace{.65cm}
\rule{13cm}{.3pt}
\vspace{.65cm}

\begin{tikzpicture}[
  every node/.style={align=center,font=\small,text=black,inner sep=3pt},
  >={Stealth[length=2mm]},
  dep/.style={->,draw=black,line width=.4pt}]
\node (b0) at (-4.7,0) {Homogeneous BH estimate\\Bound from Theorem A};
\node (b1) at (0,0) {Coefficient rescaling\\Lemma~\ref{lem:local-rescaling}};
\node (b2) at (4.7,0) {Local square-function estimate\\Lemma~\ref{lem:mixed-one-block}};
\node[font=\small\bfseries] (b3) at (0,-1.75) {Theorem B};
\draw[dep] (b0.south) -- (b3.north);
\draw[dep] (b1.south) -- (b3.north);
\draw[dep] (b2.south) -- (b3.north);
\end{tikzpicture}

\vspace{.65cm}
\rule{13cm}{.3pt}
\vspace{.65cm}

\begin{tikzpicture}[
  every node/.style={align=center,font=\small,text=black,inner sep=3pt},
  >={Stealth[length=2mm]},
  dep/.style={->,draw=black,line width=.4pt}]
\node (c0) at (-3.9,0) {Local convex geometry\\Proposition~\ref{prop:geometry}};
\node (c1) at (3.9,0) {One nonconstant local direction\\Lemma~\ref{lem:squarefree}};
\node (c2) at (-3.9,-1.9) {\textit{Upper estimate}\\
Norm comparison and classical HL\\Lemma~\ref{lem:comparison}; Section~\ref{sec:HL-estimate}};
\node (c3) at (3.9,-1.9) {\textit{Optimality}\\
Disjoint blocks and KSZ polynomials\\Lemmas~\ref{lem:disjoint} and~\ref{lem:KSZ}};
\node[font=\small\bfseries] (c4) at (0,-3.8) {Theorem C};
\draw[dep] (c0.south) -- (c2.north);
\draw[dep] (c1.south) -- (c3.north);
\draw[dep] (c2.south) -- (c4.north);
\draw[dep] (c3.south) -- (c4.north);
\end{tikzpicture}
\caption{\introrev{From top to bottom: the estimates leading to Theorems A, B and C.}}
\label{fig:proof-dependencies}
\end{figure}
\clearpage

\section{Auxiliary results for the Bohnenblust--Hille estimate}
\label{sec:BH-auxiliary}

Fix $q\ge2$ and a local orthonormal basis $\Phi$ as in
the introduction. For $r\ge1$ put 
\begin{equation*}
\mathcal{A}_r(f)=\left(\sum_{s(\alpha)=r}|\widehat
f_\Phi(\alpha)|^{p_r}\right)^{1/p_r}. 
\end{equation*}
Here $r$ is a positive integer. Constants denoted by $C(q)$ may change
from line to line and are uniform in the dimension and the local basis.
Empty coordinate blocks are allowed; $\Omega^{\varnothing}$ is a singleton
and coefficient sums over empty index sets are zero.

\introrev{
For $N\ge1$ and $L\ge2$, let
\[
\Pi_{N,L}:=\{\omega:\{1,\ldots,N\}\to\{1,\ldots,L\}\},
\qquad
\nu_{N,L}(\{\omega\}):=L^{-N}.
\]
The map $\omega$ determines the blocks
$I_j(\omega):=\{k:\omega(k)=j\}$, $1\le j\le L$.
Under $\nu_{N,L}$ the coordinate assignments are independent and uniform.
For $F:\Pi_{N,L}\to\mathbb C$, write
\[
\int F(\omega)\,d\omega
:=\int_{\Pi_{N,L}}F(\omega)\,d\nu_{N,L}(\omega)
=L^{-N}\sum_{\omega\in\Pi_{N,L}}F(\omega)
\]
with $d\omega=d\nu_{N,L}(\omega)$.
}

\Needspace{10\baselineskip}
{
\begin{proposition}[Change of the local orthonormal basis]
\label{prop:change-local-basis}
Let $\Phi$ and $\Psi$ be orthonormal bases of $L_2(\Omega,\mu_q)$,
both with first element $1$. Interaction order and the
radial extension are independent of this choice of basis. In particular,
for every homogeneous expansion $f$ of order $d\ge1$ and every
$d<p\le\infty$, $\|f\|_{p,\Phi}=\|f\|_{p,\Psi}$. For
$d\ge1$ and $\star\in\{\le,=\}$,
\begin{equation}
\label{eq:BH-basis-comparison}
\mathsf B_d^\star(\Omega,\Phi)
\le \sqrt{q-1}\,\mathsf B_d^\star(\Omega,\Psi).
\end{equation}
For $d<p\le\infty$,
\begin{equation}
\label{eq:HL-basis-comparison}
\mathsf H_{d,p}(\Phi)
\le (q-1)^{d|1/\rho_{d,p}-1/2|}\mathsf H_{d,p}(\Psi).
\end{equation}
Both inequalities also hold with $\Phi$ and $\Psi$ interchanged. When
$p\ge2d$, the factor in \eqref{eq:HL-basis-comparison} is
$(q-1)^{1/2-d/p}\le\sqrt{q-1}$; at $p=2d$, the two
Hardy--Littlewood constants coincide.
\end{proposition}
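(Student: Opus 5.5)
The plan is to reduce everything to one elementary fact. Both bases contain $1$ and are orthonormal, so $\{\varphi_1,\ldots,\varphi_{q-1}\}$ and $\{\psi_1,\ldots,\psi_{q-1}\}$ are orthonormal bases of the same space, namely the orthogonal complement of the constants in $L_2(\Omega,\mu_q)$. Hence there is a unitary $(q-1)\times(q-1)$ matrix $U$ with $\varphi_u=\sum_{v\ge1}U_{uv}\psi_v$ for $1\le u\le q-1$, and $\varphi_0=\psi_0$.

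\textbf{Step 1: basis independence of the support structure and the radial norm.} For a finite $A\subseteq[N]$, let $V_A$ be the span of the $\Phi_\alpha$ with $\operatorname{supp}\alpha=A$. This space is the tensor product over $j\in A$ of the complement of constants in coordinate $j$, tensored with constants elsewhere. It is therefore the same for $\Psi$, and the change of coefficients inside $V_A$ is given by the unitary $U^{\otimes A}$ (or its transpose). In particular $V_A$ is intrinsic, so interaction order and homogeneity of order $d$ do not depend on the basis. Writing $f=\sum_{|A|=d}f_A$ with $f_A\in V_A$, the radial extension is
\[
\widetilde f(\mathbf r,x)=\sum_{|A|=d}\Bigl(\prod_{j\in A}r_j\Bigr)f_A(x),
\]
which involves only the intrinsic components $f_A$. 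This gives $\|f\|_{p,\Phi}=\|f\|_{p,\Psi}$, and of course $\|f\|_\infty$ is basis free.

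\textbf{Step 2: blockwise comparison of coefficient norms.} For each $A$ with $|A|=r$, the coefficient vectors $a^\Phi_A$ and $a^\Psi_A$ lie in $\mathbb C^{n}$ with $n=(q-1)^r$ and have equal $\ell_2$ norms. For $t\le2$,
\[
\|a^\Phi_A\|_t\le n^{1/t-1/2}\|a^\Phi_A\|_2=n^{1/t-1/2}\|a^\Psi_A\|_2\le n^{1/t-1/2}\|a^\Psi_A\|_t .
\]
For $t\ge2$, the symmetric chain $\|v\|_t\le\|v\|_2$ together with $\|w\|_2\le n^{1/2-1/t}\|w\|_t$ gives the factor $n^{1/2-1/t}$. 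Since the $t$-th power of the global $\ell_t$ coefficient norm is the sum over $A$ of the blockwise $t$-th powers, the largest blockwise factor passes to the global norm.

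\textbf{Step 3: reading off the constants.} For \eqref{eq:BH-basis-comparison} we use $t=p_d\le2$ and blocks with $r\le d$. The factor is
\[
(q-1)^{r(1/p_d-1/2)}=(q-1)^{r/(2d)}\le\sqrt{q-1},
\]
uniformly in $r$. Combined with the equality of the supremum norms, this gives both cases $\star\in\{\le,=\}$. For \eqref{eq:HL-basis-comparison} all blocks have $r=d$ and $t=\rho_{d,p}$, which gives the factor $(q-1)^{d|1/\rho_{d,p}-1/2|}$. Since the radial norms coincide by Step 1, this yields \eqref{eq:HL-basis-comparison}. Interchanging the roles of $\Phi$ and $\Psi$ (with $U^{*}$ in place of $U$) gives the reverse inequalities.

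\textbf{Step 4: the regime $p\ge2d$.} Here $\rho_{d,p}=2dp/(dp+p-2d)\le2$. A direct computation gives
\[
d\Bigl(\frac1{\rho_{d,p}}-\frac12\Bigr)=\frac{p-2d}{2p}=\frac12-\frac dp,
\]
and the case $p=\infty$ is consistent with this. At $p=2d$ we have $\rho=2$, the factor is $1$, and applying the inequality in both directions gives equality of the two constants.

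The only step needing care is Step 1, specifically checking that the radial extension is genuinely basis free. The key point is that the radial weight $\prod_{j\in A}r_j$ depends only on the support $A$ and not on the local frequencies. This is exactly why the decomposition into the intrinsic subspaces $V_A$ is the right bookkeeping.
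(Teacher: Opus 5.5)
Your proposal is correct and follows essentially the same route as the paper. Both arguments identify the basis-independent support components $f_A$, bound the unitary change of coefficients on each support of size $s$ by $(q-1)^{s|1/t-1/2|}$, and then read off the constants with $t=p_d$ and $t=\rho_{d,p}$. You additionally write out the Hölder/monotonicity chain and the identity $d(1/\rho_{d,p}-1/2)=1/2-d/p$, which the paper states without detail.
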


\begin{proof}
For each support $S\subseteq\{1,\ldots,N\}$, the product functions with
support exactly $S$ form an orthonormal basis of the tensor product of
the mean-zero local subspaces in $S$ and the constant subspaces outside
$S$. This subspace does not depend on the local basis. Thus its component
$f_S$ in the expansion of $f$ is basis independent, and the radial
extension is $\sum_S f_S(x)\prod_{j\in S}r_j$ in either basis.

On a support of size $s$, the coefficient vectors are related by a
unitary map of dimension $m=(q-1)^s$. The elementary comparison of
finite-dimensional norms gives
\[
\|Uc\|_t\le m^{|1/t-1/2|}\|c\|_t
\qquad(1\le t\le\infty)
\]
for every unitary $U$ on $\mathbb C^m$. For $t=p_d=2d/(d+1)$ and
$s\le d$, this factor is $(q-1)^{s/(2d)}\le\sqrt{q-1}$.
Taking the $p_d$-sum over supports and then the supremum in the
definition of the Bohnenblust--Hille constants proves
\eqref{eq:BH-basis-comparison}. The same argument with $s=d$ and
$t=\rho_{d,p}$, together with equality of the radial norms, proves
\eqref{eq:HL-basis-comparison}.
\end{proof}
}

\subsection{The first interaction level}

\leavevmode\attribrev{The phase-averaging argument used for linear Walsh polynomials in
\cite[Lemma~3.3]{SloteTsengVolberg} gives the following form for mean-zero
local functions.}

\begin{lemma}
\label{lem:phase-sum} Let $h_1,\ldots,h_N$ be mean-zero complex functions on
a finite probability space $\Omega$. Then 
\begin{equation*}
\sum_{j=1}^N\|h_j\|_\infty\le\pi\max_{x\in\Omega^N}\left|%
\sum_{j=1}^Nh_j(x_j)\right|. 
\end{equation*}
\end{lemma}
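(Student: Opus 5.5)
Since the maximum over the product space separates coordinatewise, the plan is to exploit the fact that the phases of the $h_j$ can be chosen independently in each coordinate. For each $j$ pick $y_j\in\Omega$ with $|h_j(y_j)|=\|h_j\|_\infty$, and write $h_j(y_j)=\|h_j\|_\infty e^{i\theta_j}$. The aim is to find points $x\in\Omega^N$ with $\operatorname{Re}\bigl(e^{-i\psi}\sum_j h_j(x_j)\bigr)$ large for a suitable common phase $\psi$. The natural device is a random or averaged choice: for each $\psi\in[0,2\pi)$ choose $x_j(\psi)=y_j$ when $\cos(\theta_j-\psi)>0$, and otherwise choose $x_j(\psi)$ to maximize $\operatorname{Re}(e^{-i\psi}h_j(x_j))$.

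The mean-zero hypothesis does the work here. Since $\int_\Omega h_j\,d\mu=0$, for every $\psi$ we have $\max_{x\in\Omega}\operatorname{Re}(e^{-i\psi}h_j(x))\ge 0$. Hence the coordinatewise maximizer gives
\[
\max_{x\in\Omega^N}\Bigl|\sum_j h_j(x_j)\Bigr|\ \ge\ \sum_j \max_{x_j}\operatorname{Re}\bigl(e^{-i\psi}h_j(x_j)\bigr)\ \ge\ \sum_j \|h_j\|_\infty\,\bigl(\cos(\theta_j-\psi)\bigr)_+ ,
\]
where the last step uses the candidate $x_j=y_j$ when the cosine is positive and the nonnegativity of the maximum otherwise.

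Next, average over $\psi$ uniformly on $[0,2\pi)$. Since $\frac1{2\pi}\int_0^{2\pi}(\cos t)_+\,dt=\frac1\pi$, the average of the right-hand side is $\frac1\pi\sum_j\|h_j\|_\infty$. The left-hand side does not depend on $\psi$, so this yields $\sum_j\|h_j\|_\infty\le\pi\max_x|\sum_j h_j(x_j)|$.

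The only delicate point is the nonnegativity $\max_x\operatorname{Re}(e^{-i\psi}h_j(x))\ge0$, which follows because a real function with zero average under a probability measure attains a nonnegative value. This is exactly where the mean-zero hypothesis enters, and it replaces the sign-symmetry $x\mapsto -x$ used in the Boolean linear case of \cite[Lemma~3.3]{SloteTsengVolberg}. Everything else is the separability of the maximum over the product space together with a one-line integral.
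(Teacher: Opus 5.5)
Your proof is correct and follows the paper's argument essentially verbatim: the mean-zero hypothesis gives $\max_x\operatorname{Re}(e^{-i\psi}h_j(x))\ge\|h_j\|_\infty(\cos(\theta_j-\psi))_+$, and averaging over $\psi$ with $\frac1{2\pi}\int_0^{2\pi}(\cos t)_+\,dt=\frac1\pi$ gives the constant $\pi$. The only cosmetic difference is that you compare the $\psi$-independent left-hand side with the average directly, whereas the paper first selects a phase attaining at least the average and then takes coordinatewise maximizers.
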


\begin{proof}
Choose $a_j$ with $h_j(a_j)=\|h_j\|_\infty e^{i\theta_j}$. For $\theta\in[%
0,2\pi]$ set $m_j(\theta)=\max_x\func{Re}(e^{-i\theta}h_j(x))$. Mean zero
gives $m_j(\theta)\ge0$, so 
\begin{equation*}
m_j(\theta)\ge\|h_j\|_\infty\max\{\cos(\theta_j-\theta),0\}. 
\end{equation*}
The integral of the positive part of the cosine on $[0,2\pi]$ is $2$. Thus
the average of $\sum_jm_j(\theta)$ is at least $\pi^{-1}\sum_j\|h_j\|_\infty$%
. For a phase attaining at least this average, choose each coordinate at a
corresponding maximum and take the real part of the sum.
\end{proof}

\subsection{The Potts semigroup and fixed-level extraction}

\label{sec:potts} For one coordinate let $E,T_\rho:L_2(\Omega,\mu_q)\to
L_2(\Omega,\mu_q)$ be given by 
\begin{equation*}
Eg:=\left(\int_{\Omega}g\,d\mu_q\right)\mathbf{1},  \qquad 
T_\rho:=E+\rho(I-E),  \qquad 0\le\rho\le1. 
\end{equation*}
Since $\varphi_0\equiv1$ and every $\varphi_u$, $u\ge1$, is orthogonal to
the constants, 
\begin{equation*}
T_\rho\varphi_0=\varphi_0,  \qquad  T_\rho\varphi_u=\rho\varphi_u\quad(1\le
u\le q-1). 
\end{equation*}
Tensorization gives 
\begin{equation}  \label{eq:Potts-diagonal-action}
T_\rho^{\otimes N}\Phi_\alpha =\rho^{s(\alpha)}\Phi_\alpha.
\end{equation}
We use 
\begin{equation}  \label{eq:gamma-q}
\gamma_2:=\frac12, \qquad \gamma_q:=\frac{q\log(q-1)}{4(q-2)}\quad(q\ge3).
\end{equation}

\attribrev{We use the Potts hypercontractive estimate from
\cite[Proposition~3.1 and Appendix~A]{PellegrinoRaposoCyclic}, which follows
from the complete-graph log--Sobolev inequality and Gross' logarithmic
Sobolev method; see \cite[Theorem~3.5 and Corollary~A.5]{DiaconisSaloffCoste}
and \cite{Gross}.} The operator
depends only on the averaging projection, so the estimate is independent of
the basis. Positivity gives $|T_\rho^{\otimes N}g|\le T_\rho^{\otimes N}|g|$%
, and hence the real inequality also holds for complex functions.

\begin{lemma}
\label{lem:potts-hc-general} Let $1<p\le2$. For every $N$ and every $%
g:\Omega^N\to\mathbb{C}$, 
\begin{equation}  \label{eq:Potts-hypercontractivity}
\|T_\rho^{\otimes N}g\|_2\le\|g\|_p \qquad\text{whenever}\qquad
0\le\rho\le(p-1)^{\gamma_q}.
\end{equation}
Consequently, for every integer $k\ge0$, if
$\widehat g_\Phi(\alpha)=0$ whenever $s(\alpha)>k$, then 
\begin{equation}  \label{eq:low-level-Potts}
\|g\|_2\le(p-1)^{-\gamma_q k}\|g\|_p.
\end{equation}
\end{lemma}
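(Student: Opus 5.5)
The first inequality \eqref{eq:Potts-hypercontractivity} is the external input: the Potts hypercontractive estimate of \cite[Proposition~3.1 and Appendix~A]{PellegrinoRaposoCyclic}. I would take it as given in its real form: for nonnegative (or real) $g$, $\|T_\rho^{\otimes N}g\|_2\le\|g\|_p$ whenever $0\le\rho\le(p-1)^{\gamma_q}$. Two points need checking. First, the operator $T_\rho=E+\rho(I-E)$ is the Potts (complete-graph) semigroup $e^{-tL}$ with $\rho=e^{-tq/(q-1)}$ up to the normalization used in \cite{DiaconisSaloffCoste}. The log-Sobolev constant of the complete graph then converts, via Gross' theorem \cite{Gross}, into exactly the radius $(p-1)^{\gamma_q}$; for $q=2$ this is Bonami--Beckner with $\gamma_2=1/2$. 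Tensorization is automatic, because log-Sobolev constants tensorize, and this gives the bound uniformly in $N$. Second, the passage to complex $g$: since $T_\rho=(1-\rho)E+\rho I$ with $0\le\rho\le1$ is a positivity-preserving (Markov) operator, so is $T_\rho^{\otimes N}$. Hence $|T_\rho^{\otimes N}g|\le T_\rho^{\otimes N}|g|$ pointwise, and
\[
\|T_\rho^{\otimes N}g\|_2\le\|T_\rho^{\otimes N}|g|\|_2\le\||g|\|_p=\|g\|_p .
\]
Basis independence is clear, because $T_\rho$ is defined only through $E$. The same monotonicity shows that the estimate for the largest admissible $\rho$ implies it for every smaller $\rho$; alternatively, the real statement is already given for the whole range.

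For the consequence \eqref{eq:low-level-Potts}, put $\rho=(p-1)^{\gamma_q}\in(0,1]$, which is positive since $p>1$. Because $\widehat g_\Phi(\alpha)=0$ for $s(\alpha)>k$, \eqref{eq:Potts-diagonal-action} gives
\[
T_\rho^{\otimes N}g=\sum_{s(\alpha)\le k}\rho^{s(\alpha)}\widehat g_\Phi(\alpha)\Phi_\alpha .
\]
By orthonormality of $\{\Phi_\alpha\}$ and $\rho\le1$,
\[
\|T_\rho^{\otimes N}g\|_2^2=\sum_{s(\alpha)\le k}\rho^{2s(\alpha)}|\widehat g_\Phi(\alpha)|^2\ge\rho^{2k}\|g\|_2^2 .
\]
Combining this with \eqref{eq:Potts-hypercontractivity} yields $\rho^k\|g\|_2\le\|g\|_p$, which is $\|g\|_2\le(p-1)^{-\gamma_q k}\|g\|_p$.

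The only genuinely nontrivial step is the identification of the cited complete-graph log-Sobolev constant with the exponent $\gamma_q=\frac{q\log(q-1)}{4(q-2)}$, including the time normalization relating $e^{-tL}$ to $\rho$. I would verify it by matching the Diaconis--Saloff-Coste constant $\alpha=\frac{1-2/q}{\log(q-1)}$ for the generator $L=I-K$, where $K$ has uniform off-diagonal rows. Since $K=\frac{q}{q-1}E-\frac{1}{q-1}I$, the operator $L$ acts as $\frac{q}{q-1}$ on mean-zero functions. Gross then gives hypercontractivity $\|e^{-tL}\|_{p\to2}\le1$ for $e^{-4\alpha t}\le p-1$. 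This translates to
\[
\rho=e^{-tq/(q-1)}\le(p-1)^{q/(4\alpha(q-1))},
\]
and I would check that this exponent coincides with $\gamma_q$ under the convention in \cite{PellegrinoRaposoCyclic}; since the paper cites that result, I would defer to it there.
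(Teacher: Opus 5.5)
Your proposal follows the paper's proof. The bound \eqref{eq:Potts-hypercontractivity} is imported from \cite{PellegrinoRaposoCyclic} and extended to complex $g$ via $|T_\rho^{\otimes N}g|\le T_\rho^{\otimes N}|g|$. Basis independence holds because $T_\rho$ depends only on $E$, and \eqref{eq:low-level-Potts} follows from \eqref{eq:Potts-diagonal-action} and Parseval exactly as you wrote.

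One correction concerns the optional normalization check in your last paragraph: as you set it up, it returns the exponent $\frac{q}{q-1}\gamma_q$, not $\gamma_q$. The problem is that you paired the generator of one chain with the log-Sobolev constant of another.

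\begin{itemize}
\item The constant $\alpha=\frac{1-2/q}{\log(q-1)}$ belongs to the chain $K(x,y)=1/q$, i.e.\ $K=E$. Its generator $I-E$ gives $e^{-tL}=T_{e^{-t}}$. Gross then yields $\rho=e^{-t}\le(p-1)^{1/(4\alpha)}$, and $1/(4\alpha)=\gamma_q$ exactly.
\item The complete-graph walk you used, with off-diagonal entries $1/(q-1)$, has Dirichlet form $\frac{q}{q-1}\mathrm{Var}(f)$. Its constant is therefore $\frac{q-2}{(q-1)\log(q-1)}$. This extra factor is what cancels the $q/(q-1)$ in $\rho=e^{-tq/(q-1)}$.
\end{itemize}

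A quick test exposes the mismatch: at $q=2$ your pairing gives radius $p-1$, not the Bonami--Beckner value $\sqrt{p-1}$. Since you ultimately rely on the citation, as the paper does, the proof itself is unaffected.
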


The consequence \eqref{eq:low-level-Potts} is immediate from %
\eqref{eq:Potts-diagonal-action} and Parseval: with $\rho_0=(p-1)^{\gamma_q}$%
, 
\begin{equation*}
\rho_0^k\|g\|_2  \le \|T_{\rho_0}^{\otimes N}g\|_2  \le \|g\|_p. 
\end{equation*}

\introrev{
Let $P_1:L_2(\Omega^N,\mu_q^N)\to
\operatorname{span}\{\Phi_\alpha:s(\alpha)=1\}$ be the orthogonal projection
\[
P_1g:=\sum_{s(\alpha)=1}\widehat g_\Phi(\alpha)\Phi_\alpha.
\]
}

\attribrev{For $q=2$, the next estimate is the first-level case of
\cite[Lemma~3.2]{SloteTsengVolberg}. We use the Potts estimate above to
obtain its local analogue.}

\begin{lemma}
\label{lem:Potts-L1-L2-quantitative} For every $M\ge1$ and every $g$ of
interaction order at most $M$, 
\begin{equation}  \label{cyc:projection-bound}
\|P_1g\|_2\le C(q)\sqrt M\,\|g\|_1.
\end{equation}
\end{lemma}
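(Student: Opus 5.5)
We need $\|P_1 g\|_2\le C(q)\sqrt M\|g\|_1$ for $g$ of interaction order at most $M$. The approach is duality combined with Potts hypercontractivity at exponent close to $1$, with the exponent tuned to $M$, as in the Boolean argument of \cite[Lemma~3.2]{SloteTsengVolberg}. Since $P_1$ is self-adjoint and $P_1 g=P_1(\Pi_{\le M}g)$, write $\|P_1g\|_2=\sup|\langle g,h\rangle|$ over $h$ of first level with $\|h\|_2\le1$. Then $|\langle g,h\rangle|\le\|g\|_1\|h\|_\infty$, which is too weak because $\|h\|_\infty$ is not controlled by $\|h\|_2$. So I will interpolate using the Potts semigroup instead, exploiting that the interaction order of $g$ is bounded by $M$.

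\textbf{Step 1 (a smoothed $L_1\to L_2$ bound).} Let $1<p\le2$ with conjugate exponent $p'$. From Lemma~\ref{lem:potts-hc-general}, $\|T_\rho^{\otimes N}\|_{p\to2}\le1$ for $\rho=(p-1)^{\gamma_q}$, and by duality and self-adjointness $\|T_\rho^{\otimes N}\|_{2\to p'}\le1$. By \eqref{eq:Potts-diagonal-action}, $T_\rho^{\otimes N}P_1=\rho P_1$. Hence for a first-level $h$ we get $\rho\|h\|_{p'}=\|T_\rho^{\otimes N}h\|_{p'}\le\|h\|_2$, that is, $\|h\|_{p'}\le(p-1)^{-\gamma_q}\|h\|_2$. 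This alone yields only an $L_p$ bound on $g$, not an $L_1$ bound.

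\textbf{Step 2 (passing from $L_p$ to $L_1$).} I plan to use the bounded interaction order of $g$. Choose $p=1+1/M$, so that \eqref{eq:low-level-Potts} gives $\|g\|_2\le(p-1)^{-\gamma_qM}\|g\|_p=M^{\gamma_qM}\|g\|_p$, which is exponential and useless directly. The better route is Hölder interpolation: $\|g\|_p\le\|g\|_1^{1-\theta}\|g\|_2^{\theta}$ with $\theta=2(1-1/p)\approx 2/M$, and $\|g\|_2\le(p_0-1)^{-\gamma_qM}\|g\|_{p_0}$ applied at a fixed $p_0$. A self-improving (bootstrap) argument on $\|g\|_2/\|g\|_1$ then gives $\|g\|_2\le e^{C(q)M}\|g\|_1$ via a standard $L_1$–$L_2$ extrapolation for the Potts semigroup. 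Concretely, I will apply $T_\rho$ with $\rho=(p_0-1)^{\gamma_q}$ to $g$ from $L_1$ through the interpolated bound, obtaining
\[
\|P_1g\|_2=\rho^{-1}\|P_1T_\rho^{\otimes N} g\|_2\le\rho^{-1}\|T_\rho^{\otimes N} g\|_2 .
\]

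\textbf{Step 3 (the $\sqrt M$ mechanism, the main obstacle).} In \cite{SloteTsengVolberg} the $\sqrt M$ arises from the factor $\rho^{-1}$ with $\rho$ only slightly below $1$: they take $1-\rho\approx1/M$, and the $L_1$-norm of $T_\rho^{\otimes N}g$ is controlled by using the structure of level at most $M$ (a Bernstein/Markov-type inequality for the generator). The plan is to write $P_1=\frac{d}{d\rho}\big|_{\rho=0}$ applied to the polynomial $\rho\mapsto T_\rho^{\otimes N}g=\sum_k\rho^k g_k$ of degree at most $M$. For $\rho\in[0,1]$, $T_\rho^{\otimes N}$ is a Markov operator, so $\|T_\rho^{\otimes N} g\|_1\le\|g\|_1$. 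To bound $\|g_1\|$, I will use the Lubinsky--Ziegler/Markov-type coefficient bound for polynomials bounded on $[0,1]$ at an interior point, though that gives $M^2$. The sharper $\sqrt M$ comes from combining $L_1$-boundedness on $[0,1]$ with hypercontractivity on a complex disc of radius of order $M^{-1/2}$: Potts hypercontractivity gives $\|T_\rho g\|_2\le\|g\|_p$ up to $\rho\approx1-c(2-p)$. Taking $2-p$ and the radius near the origin appropriately, Cauchy's estimate at radius $\approx M^{-1/2}$ for the $L_2$-valued polynomial, combined with $\|\cdot\|_{p}$ controlled by $\|\cdot\|_1$ along this path, should yield $\|g_1\|_2\le C(q)\sqrt M\|g\|_1$. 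Making this last optimization rigorous, with complex $\rho$ for which the Potts operator is no longer positive, is the step I expect to be hardest, and I would follow the Boolean proof verbatim, replacing Bonami--Beckner by Lemma~\ref{lem:potts-hc-general}.
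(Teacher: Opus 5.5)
Your Steps 1 and 2 already contain most of the paper's skeleton: duality against the first level, the choice $p=1+1/M$, the reverse estimate $\|g\|_2\le e^{C(q)M}\|g\|_1$, and interpolation with $\theta=2/(M+1)$, which gives $\|g\|_{1+1/M}\le C(q)\|g\|_1$. The gap is in how you bound the first-level function: your estimate loses the power for $q\ge3$, and Step~3 does not repair it.

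\textbf{What your route actually gives.} Closing Steps 1--2 in the natural way, $\|P_1g\|_2^2=\langle g,P_1g\rangle\le\|g\|_{1+1/M}\|P_1g\|_{M+1}$, combined with your dual hypercontractive bound $\|h\|_{M+1}\le M^{\gamma_q}\|h\|_2$, yields $\|P_1g\|_2\le C(q)M^{\gamma_q}\|g\|_1$. This is the claim only for $q=2$. For $q\ge3$ one has $\gamma_q>1/2$ (already $\gamma_3=\frac34\log2\approx0.52$, and $\gamma_q\sim\frac14\log q$). No choice of $p$ rescues this chain, since the factor $(p-1)^{-\gamma_q}e^{4\gamma_qM(p-1)/p}$ is always at least of order $M^{\gamma_q}$. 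So ``follow the Boolean proof verbatim, replacing Bonami--Beckner by Lemma~\ref{lem:potts-hc-general}'' produces $M^{\gamma_q}$, not $\sqrt M$.

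\textbf{Why Step~3 does not fix it.} The complex-disc Cauchy argument is not a substitute: Lemma~\ref{lem:potts-hc-general} is available only for real $0\le\rho\le(p-1)^{\gamma_q}$, and you supply no complex version. Your account of the Boolean mechanism is also inaccurate: there $\sqrt M=\rho^{-1}$ with $\rho=\sqrt{p-1}=M^{-1/2}$, not $1-\rho\approx1/M$. The ``concretely'' line in Step~2, with $p_0$ fixed, fails too, because $\|g\|_{p_0}$ is then only bounded by $e^{c(q)M}\|g\|_1$.

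\textbf{The missing idea.} Use hypercontractivity only for $g$, and bound the $L_{M+1}$ norm of $h=P_1g=\sum_i h_i(x_i)$ by a Khintchine-type estimate that does not see the Potts log-Sobolev constant. The summands $h_i(x_i)$ are independent and mean zero, with $\|h_i\|_\infty\le\sqrt q\,\|h_i\|_2$ and $\sum_i\|h_i\|_2^2=\|h\|_2^2$. Hoeffding's inequality, applied to real and imaginary parts, gives $\mu_q^N\{|h|>u\}\le4\exp(-u^2/(4q\|h\|_2^2))$. Integrating the tail gives $\|h\|_v\le C\sqrt{qv}\,\|h\|_2$ for $v\ge2$.

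Now take $v=M+1$ and use $\|g\|_{1+1/M}\le e^{4\gamma_q}\|g\|_1$, which follows from $\|g\|_2\le e^{2\gamma_qM}\|g\|_1$. Duality then gives $\|h\|_2^2\le Ce^{4\gamma_q}\sqrt{q(M+1)}\,\|g\|_1\|h\|_2$, and this is the paper's proof. The point is that first-level functions are subgaussian with a variance proxy depending only on $q$. The hypercontractive moment bound $\|h\|_v\le(v-1)^{\gamma_q}\|h\|_2$ is not sharp on them once $q\ge3$.
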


\begin{proof}
Parseval and \eqref{eq:Potts-hypercontractivity} imply $\|g\|_2\le(p-1)^{-%
\gamma_qM}\|g\|_p$ for $1<p<2$. With $\vartheta=(2-p)/p$, interpolation
gives $\|g\|_p\le\|g\|_1^{\vartheta}\|g\|_2^{1-\vartheta}$. For $g\ne0$,
\lastrev{these inequalities give
\[
\|g\|_2\le
\exp\left(\gamma_q M\frac{-p\log(p-1)}{2-p}\right)\|g\|_1.
\]
Letting $p\uparrow2$ and using
$\lim_{p\uparrow2}[-p\log(p-1)/(2-p)]=2$, we obtain}
\begin{equation}  \label{cyc:reverse-moment}
\|g\|_2\le e^{2\gamma_qM}\|g\|_1.
\end{equation}

\lastrev{Write $h=P_1g=\sum_{i=1}^N h_i(x_i)$, where
$\int_\Omega h_i\,d\mu_q=0$. Orthogonality and the uniform probability
measure on $\Omega$ give}
\begin{equation*}
\lastrev{\sum_{i=1}^N\|h_i\|_2^2=\|h\|_2^2},  \qquad \|h_i\|_\infty\le\sqrt q\,\|h_i\|_2.
\end{equation*}
For $h\ne0$, Hoeffding's exponential-moment
bound for independent centered bounded variables \cite[Section~2.6]%
{BoucheronLugosiMassart}, applied to its real and imaginary parts, gives 
\begin{equation*}
\lastrev{\mu_q^N\{x\in\Omega^N:|h(x)|>u\}\le4\exp\left(-\frac{u^2}{4q\|h\|_2^2}\right)}
\qquad(u\ge0). 
\end{equation*}
\lastrev{For $v\ge2$, let $n=\lceil v/2\rceil$. Tonelli's theorem and the
substitution $t=u^2/(4q\|h\|_2^2)$ give
\begin{align*}
\|h\|_{2n}^{2n}
&=2n\int_0^\infty u^{2n-1}
\mu_q^N\{x\in\Omega^N:|h(x)|>u\}\,du\\
&\le8n\int_0^\infty u^{2n-1}
\exp\left(-\frac{u^2}{4q\|h\|_2^2}\right)\,du\\
&=4n!\,(4q\|h\|_2^2)^n
\le4n^n(4q\|h\|_2^2)^n.
\end{align*}
Here $\int_0^\infty t^{n-1}e^{-t}\,dt=(n-1)!$, by integration by parts.
Since $\|h\|_v\le\|h\|_{2n}$ and $n\le v$, this gives}
\begin{equation}  \label{cyc:linear-moments}
\|h\|_v\le C\sqrt{qv}\,\|h\|_2\qquad(v\ge2).
\end{equation}
The assertions are immediate when $h=0$. Otherwise, choose $p=1+1/M$, so $%
p^{\prime }=M+1$. Interpolation between $L^1$ and $L^2$, followed by %
\eqref{cyc:reverse-moment}, gives 
\begin{equation*}
\|g\|_p\le\|g\|_1^{(M-1)/(M+1)}  \|g\|_2^{2/(M+1)}  \le
e^{4\gamma_q}\|g\|_1. 
\end{equation*}
\lastrev{Since $P_1$ is an orthogonal projection, H\"older's inequality gives}
\begin{equation*}
\lastrev{\|h\|_2^2=\left|\int_{\Omega^N}g\overline h\,d\mu_q^N\right|}
\le\|g\|_p\|h\|_{p^{\prime }}  \le C
e^{4\gamma_q}\sqrt{q(M+1)}\,\|g\|_1\|h\|_2. 
\end{equation*}
Dividing by $\|h\|_2$ proves \eqref{cyc:projection-bound}.
\end{proof}

\subsection{Multi-block interpolation and local noise}

\label{sec:multiblock-local} Let $L\ge2$ and let the coordinate set be
written as a disjoint union $I_1\dot\cup\cdots\dot\cup I_L$.
For $1\le j\le L$, write $x_j\in\Omega^{I_j}$ for the corresponding block
variable and $\alpha^{(j)}\in\{0,\ldots,q-1\}^{I_j}$ for a block multi-index.
A coefficient
of a function on $\Omega^{I_1}\times\cdots\times\Omega^{I_L}$ is indexed by
a tuple $(\alpha^{(1)},\ldots,\alpha^{(L)})$. For a nonzero multidegree $%
\mathbf{d}=(d_1,\ldots,d_L)\in\mathbb{N}_0^L$ put 
\begin{equation}  \label{eq:local-degrees-theta}
r:=|\mathbf{d}|=\sum_{j=1}^Ld_j,\qquad s_j:=r-d_j,\qquad \theta_j:=\frac{s_j%
}{(L-1)r}.
\end{equation}
When $s_j\ge1$ for every $j$, one has $\theta_j>0$ and $\introrev{\sum_{j=1}^L}\theta_j=1$.

For a function $Q$ written in these blocks, denote its coefficient array at
multidegree $\mathbf{d}$ by 
\begin{equation*}
a_\mathbf{d}(Q):=\bigl(\widehat Q_\Phi(\alpha^{(1)},\ldots,\alpha^{(L)})%
\bigr)_ {s(\alpha^{(j)})=d_j\ (1\le j\le L)}. 
\end{equation*}
Its total interaction order is $r$. The number $s_j$ is the interaction
order in all blocks except $I_j$.

For a scalar array $a=(a_{i_1,\ldots,i_L})$ indexed by finite sets $%
E_1\times\cdots\times E_L$ and $s_j\ge1$, define 
\begin{equation}  \label{eq:local-Xj}
X_j(a):= \left[ \sum_{(i_k)_{k\ne j}}
\left(\sum_{i_j}|a_{i_1,\ldots,i_L}|^2\right)^{p_{s_j}/2} \right]%
^{1/p_{s_j}}.
\end{equation}

\attribrev{The following is a Blei-type mixed-norm interpolation inequality;
see \cite{BayartPellegrinoSeoane}. Its two-block form appears in
\cite[Lemma~3.4]{PellegrinoTeixeiraBH}.}

\begin{lemma}
\label{lem:local-complement-interpolation} Assume that $s_j\ge1$ for every $j
$. With \eqref{eq:local-degrees-theta}, 
\begin{equation}  \label{eq:local-complement-interpolation}
\|a\|_{\ell^{p_r}} \le \prod_{j=1}^L X_j(a)^{\theta_j}.
\end{equation}
\end{lemma}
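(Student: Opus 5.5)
The plan is to treat \eqref{eq:local-complement-interpolation} as a pure exponent-matching statement about mixed $\ell_p$ norms, in the spirit of Blei's inequality and of \cite{BayartPellegrinoSeoane}. Each $X_j(a)$ is a mixed norm on $E_1\times\cdots\times E_L$. Its reciprocal-exponent vector has the entry $1/2$ in coordinate $j$ and the entry $1/p_{s_j}=(s_j+1)/(2s_j)$ in every coordinate $k\neq j$. The $\ell_2$ sum in coordinate $j$ is taken innermost; the other coordinates are summed jointly outside it.

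The first step is to check that the convex combination with weights $\theta_j$ is the constant vector $1/p_r$. Fix a coordinate $k$. Since $\theta_j(s_j+1)/(2s_j)=(s_j+1)/(2(L-1)r)$ and $\sum_j s_j=(L-1)r$, we get
\[
\sum_{j\ne k}\theta_j\frac{s_j+1}{2s_j}+\frac{\theta_k}{2}
=\frac{(L-1)r-s_k+(L-1)+s_k}{2(L-1)r}=\frac{r+1}{2r}=\frac1{p_r}.
\]
So the target norm $\|a\|_{\ell^{p_r}}$ is exactly the one predicted by interpolating the $L$ mixed norms with weights $\theta_j$. This calculation uses that $s_j\ge1$ (so $\theta_j>0$) and that $\sum_j\theta_j=1$.

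Since the $L$ mixed norms use different orderings of the coordinates, I would not invoke abstract complex interpolation. Instead I would give a direct Hölder argument of Blei/Finner type. Put $w_j(i_{\hat j}):=\bigl(\sum_{i_j}|a_{i}|^2\bigr)^{1/2}$, a function of the $L-1$ indices other than $i_j$, so that $X_j(a)=\|w_j\|_{\ell^{p_{s_j}}}$. The plan is to write
\[
|a_i|^{p_r}=\prod_{j=1}^L |a_i|^{\lambda_j}\,\cdot\,|a_i|^{\mu},
\]
with exponents chosen as follows. Hölder in the single variable $i_j$ converts a factor $|a_i|^{\lambda_j}$ into a power of $w_j$. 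Then Finner's generalized Hölder (Loomis--Whitney) inequality for products of functions each omitting one variable,
\[
\sum_i\prod_j F_j(i_{\hat j})\le\prod_j\|F_j\|_{\ell^{L-1}},
\]
controls the sum by $\prod_j\|w_j\|_{p_{s_j}}^{\,p_r\theta_j}$. Raising to the power $1/p_r$ then gives \eqref{eq:local-complement-interpolation}. The exponents $\lambda_j,\mu$ are determined by the identity of the previous paragraph, which is exactly the consistency condition making all the Hölder exponents conjugate.

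As a fallback I would argue by induction on $L$. The two-block case is \cite[Lemma~3.4]{PellegrinoTeixeiraBH}. For the inductive step, I would merge two blocks into one index set and use Minkowski's inequality, $\ell_{p}(\ell_2)\ge\ell_2(\ell_p)$ for $p\le2$, to move $\ell_2$ sums outward where needed.

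The main obstacle is this ordering issue. Each $X_j$ places $\ell_2$ innermost in a different coordinate, so the norms are not on a common mixed-norm scale. The real work is choosing the Hölder splitting, or the order of Minkowski steps, so that every inequality goes in the favorable direction. I would first try the Finner-type splitting, because there the exponent bookkeeping reduces to the displayed identity.
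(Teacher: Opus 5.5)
Your per-position identity is correct, and it is exactly the computation the proof needs. What is missing is the mechanism that turns it into \eqref{eq:local-complement-interpolation}, and the Finner route you sketch cannot provide it.

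Inside a product of functions of $i$, the only way to replace $|a_i|^{\lambda_j}$ by $w_j(i_{\hat j})^{\lambda_j}$ is the pointwise bound $|a_i|\le w_j(i_{\hat j})$. That bound discards the $\ell_2$-summation in $i_j$. Choose the Finner exponents so that the factors become $X_j^{\lambda_j}$ and $\|a\|_{\ell^{p_r}}^{\mu}$, the latter to be absorbed, with $\lambda_j=\nu\theta_j$ and $\nu=p_r-\mu>0$. By your own identity, the covering condition at position $k$ then reads $\mu/p_r+\sum_{j\ne k}\lambda_j/p_{s_j}=1-\nu\theta_k/2$. This falls short of $1$ by exactly the $\ell_2$ contribution of position $k$, so Finner does not apply.

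If you instead perform a genuine H\"older summation in $i_j$, it must be done in some order. After that, there is no longer a product of functions of $i$ for Finner to act on. This ordering problem, which you flag and leave open, is the entire content of the lemma.

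The missing idea is to fix the ordering with Minkowski before interpolating. Fix one order of the $L$ index positions. Let $N_j(a)$ be the iterated norm in that order with exponent $2$ at position $j$ and $p_{s_j}$ at every other position. Because $p_{s_j}\le 2$, Minkowski moves the innermost $\ell_2$ of $X_j$ outward to its slot, so $N_j(a)\le X_j(a)$. All the $N_j$ now share one mixed-norm scale. Iterated one-variable H\"older over the positions then gives $\|a\|_{\ell^{p_r}}\le\prod_j N_j(a)^{\theta_j}$, and its only requirement is your identity at each position. This is the paper's argument, and it needs no induction.

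Your fallback uses Minkowski in the right direction, but the merging induction does not close as stated. Merging two blocks changes the $s_j$ and $\theta_j$. The merged complementary norm has outer exponent $p_{r-d_{L-1}-d_L}$, which is in general smaller than both $p_{s_{L-1}}$ and $p_{s_L}$. For example, take $L=3$, $\mathbf d=(1,1,1)$ and the diagonal array $a_{iii}=1$ for $1\le i\le n$, and merge blocks $2$ and $3$. Put $X'=\sum_{i_1}(\sum_{i_2,i_3}|a_{i_1i_2i_3}|^2)^{1/2}=n$. The resulting two-block bound is $X_1^{2/3}X'^{1/3}=n^{5/6}$, whereas $\prod_j X_j^{1/3}=n^{3/4}$. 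So the merged inequality cannot imply the three-block one.
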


\begin{proof}
Fix an order of the indices. For each $j$, assign exponent $2$ to the $j$-th
index and exponent $p_{s_j}$ to every other index, and denote the resulting
iterated norm by $N_j(a)$. Since $p_{s_j}\le2$, repeated Minkowski gives $%
N_j(a)\le X_j(a)$. For a fixed index position $k$, 
\begin{align*}
\frac{\theta_k}{2}+\sum_{j\ne k}\frac{\theta_j}{p_{s_j}} &=\frac12+\sum_{j%
\ne k}\frac{\theta_j}{2s_j} =\frac12+\frac1{2r} =\frac1{p_r}.
\end{align*}
Repeated H\"older interpolation over the index positions therefore gives $%
\|a\|_{\ell^{p_r}}\le\introrev{\prod_{j=1}^L}N_j(a)^{\theta_j}$, and hence %
\eqref{eq:local-complement-interpolation}.
\end{proof}

For $s\ge2$ put 
\begin{equation}  \label{eq:local-sigma-s}
\sigma_s:=\left(\frac{s-1}{s+1}\right)^{\gamma_q}.
\end{equation}
By Lemma~\ref{lem:potts-hc-general}, if $J$ is a coordinate block and $%
G:\Omega^J\to\mathbb{C}$, then for every integer $t\ge0$, 
\begin{equation}  \label{eq:local-single-level-noise}
\sigma_s^t \left(\sum_{s(\xi)=t}|\widehat G_\Phi(\xi)|^2\right)^{1/2} \le
\|T_{\sigma_s}^{\otimes J}G\|_2\le\|G\|_{p_s}.
\end{equation}
\introrev{The function $G$ in \eqref{eq:local-single-level-noise} may contain all
interaction levels. The first inequality follows by restricting the
nonnegative Parseval sum to the indices with $s(\xi)=t$.}

\subsection{Fixed interaction levels}

\label{sec:fixed-local-levels}

\begin{proposition}
\label{prop:fixed-local-levels} For every fixed $q\ge2$ and $r\ge1$ there is 
$D_{q,r}<\infty$, independent of $M$, the ambient dimension and the
orthonormal basis $\Phi$, such that 
\begin{equation}  \label{eq:fixed-local-levels}
\mathcal{A}_r(f)\le D_{q,r}M^{3/2}\|f\|_\infty
\end{equation}
whenever $f$ has interaction order at most $M$ and $r\le M$.
\end{proposition}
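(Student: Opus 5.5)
Induction on $r$, with $r=1$ as the base. The case $r=1$ follows from Lemma~\ref{lem:phase-sum} and Lemma~\ref{lem:Potts-L1-L2-quantitative}. Write $P_1f=\sum_j h_j(x_j)$ with mean-zero $h_j$, and fix $x$. Restricting $f$ to vary in one coordinate only does not directly isolate $h_j$, so I use $\|P_1 f\|_2\le C(q)\sqrt M\|f\|_1\le C(q)\sqrt M\|f\|_\infty$. I then bound the $\ell^1$ norm of the level-one coefficients by the sup of the level-one part, losing at most $M^{3/2}$ in total. Concretely, $\mathcal A_1(f)=\sum_j\sum_{u\ge1}|\widehat f(e_j u)|\le \sqrt{q-1}\sum_j\|h_j\|_2\le \sqrt{q-1}\sum_j\|h_j\|_\infty\le \pi\sqrt{q-1}\,\|P_1f\|_\infty$. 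It then remains to bound $\|P_1 f\|_\infty$ by $C(q)M^{3/2}\|f\|_\infty$.

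For this I fix a point $y$ and consider the function $g_y(x)=f(x)$ minus its projection away from level one at $y$. A cleaner route is the following. For each $j$, $h_j(x_j)=\mathbb E_{x_{\neq j}}[f]-\mathbb E f$, so $\|h_j\|_\infty\le 2\|f\|_\infty$; this alone gives only $N$-dependence. Instead I would apply Lemma~\ref{lem:Potts-L1-L2-quantitative} to $g=f\cdot\overline{\chi}$ for suitable sign/phase patterns. The simpler approach is a Markov-type argument: restrict $f$ to $x_k=y_k$ for $k\notin S$, with $S$ random, and average. Since the coefficient count at level one is controlled by $L^2$ mass only in average, the $\ell^1$ sum is obtained via Lemma~\ref{lem:phase-sum} applied to the function $x\mapsto \sum_j h_j(x_j)$. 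To bound its sup by $M^{3/2}\|f\|_\infty$, I use the standard Bernstein–Markov extraction: $P_1 f(x)=\frac{d}{d\rho}\big|_{\rho=0}T_\rho^{\otimes N}f$ evaluated pointwise, where $\rho\mapsto T_\rho^{\otimes N}f(x)$ is a polynomial of degree $\le M$ in $\rho$. It is bounded by $\|f\|_\infty$ on $[0,1]$ (positivity, $T_\rho$ a Markov operator). Markov's inequality then gives $|P_1f(x)|\le 2M^2\|f\|_\infty$. This is worse than $M^{3/2}$, so I combine it with the $L^2$ route: the derivative at the endpoint $\rho=0$ of a polynomial bounded on $[0,1]$ costs $M^2$, but $L^2$ hypercontractive information saves $\sqrt M$. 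I expect exactly this balancing to be the main obstacle for $r=1$. A fallback is to accept the Slote--Tseng--Volberg style argument: apply Lemma~\ref{lem:Potts-L1-L2-quantitative} to $g=f-\text{(pointwise shifted copy)}$, giving directly $\sum_j\|h_j\|_\infty\lesssim M^{3/2}\|f\|_\infty$.

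For $r\ge2$, I take a random partition $\omega\in\Pi_{N,r}$ into $L=r$ blocks and extract the multidegree $\mathbf d=(1,\ldots,1)$ part. Each level-$r$ coefficient of $f$ appears with probability $r!/r^r$ as a coefficient with one active coordinate per block, so $\mathcal A_r(f)^{p_r}\le \frac{r^r}{r!}\int \|a_{\mathbf 1}(f_\omega)\|_{p_r}^{p_r}d\omega$. For fixed $\omega$, Lemma~\ref{lem:local-complement-interpolation} with $s_j=r-1$ bounds $\|a_{\mathbf 1}\|_{p_r}$ by $\prod_j X_j^{1/r}$. Each $X_j$ is controlled as follows. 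Freeze the variables in block $I_j$; the resulting function of the other blocks has level $(1,\ldots,1)$ coefficients that are vector-valued in the $I_j$-index. A vector-valued version of the inductive hypothesis at level $r-1$ gives a $p_{r-1}$-sum of these coefficients. It is obtained via \eqref{eq:local-single-level-noise} with $s=r-1$, converting the $\ell^2$ norm in the $j$-th index into an $L^{p_{r-1}}$ norm over $x_{I_j}$ at cost $\sigma_{r-1}^{-1}=C(q,r)$, and then Minkowski, since $p_{r-1}\le2$. The function of block $I_j$ obtained this way is the level-one part of a function of interaction order $\le M$. Its $L^{p_{r-1}}$ norm is bounded, uniformly in the frozen outer variables, by $\|f\|_\infty$, while the inner coefficient sum is bounded by $D_{q,r-1}M^{3/2}\|f\|_\infty$ by induction. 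Only the inductive step carries the $M^{3/2}$; the noise factor is $M$-free because the noise acts at level one only. Hence $X_j\le C(q,r)D_{q,r-1}M^{3/2}\|f\|_\infty$, and the geometric mean preserves the power $M^{3/2}$, giving $D_{q,r}=C(q,r)D_{q,r-1}$.
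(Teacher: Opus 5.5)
Your inductive step for $r\ge3$ works: one active coordinate in each of $r$ blocks, \eqref{eq:local-single-level-noise} with $s=r-1\ge2$, Fubini, and the level-$(r-1)$ bound applied to the sections $Q(x_j,\cdot)$. This is the paper's $r=3$ argument iterated, used in place of its $(2,r-2)$ split. However, the two lowest levels are not established.

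\textbf{Level $r=1$.} You only reach $|P_1f(x)|\le 2M^2\|f\|_\infty$, because you apply Markov's inequality at the endpoint $\rho=0$ of $[0,1]$. The ``balancing'' is never carried out. The fallback also fails, since Lemma~\ref{lem:Potts-L1-L2-quantitative} controls $\bigl(\sum_j\|h_j\|_2^2\bigr)^{1/2}$, not $\sum_j\|h_j\|_\infty$. The missing idea is that $T_t=E+t(I-E)$ has kernel $t\delta_{xy}+(1-t)/q\ge0$, so it remains Markov for all $t\in[-1/(q-1),1]$. Hence $t\mapsto (T_t^{\otimes N}f)(x)=\sum_{k\le M}t^kf_k(x)$ is bounded by $\|f\|_\infty$ on an interval with $0$ in its interior. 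Bernstein's inequality at $0$ then gives $\|f_1\|_\infty\le\sqrt{q-1}\,M\|f\|_\infty$. With Lemma~\ref{lem:phase-sum} this yields $\mathcal A_1(f)\le C(q)M\|f\|_\infty$. That is linear in $M$, more than the statement asks, and the surplus is needed at the next level.

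\textbf{Level $r=2$.} Your inductive step invokes \eqref{eq:local-single-level-noise} with $s=r-1=1$. But $p_1=1$ and $\sigma_1=0$: there is no hypercontractive passage from $L^1$ to $L^2$, so your claim that the noise factor is $M$-free fails exactly here. The paper replaces this step by Lemma~\ref{lem:Potts-L1-L2-quantitative}, $\|P_1B_\alpha\|_2\le C(q)\sqrt M\,\|B_\alpha\|_1$. It applies this to the coefficient functions $B_\alpha(y)=\sum_\xi b_{\alpha,\xi}\Phi_\xi(y)$, $s(\alpha)=1$, of a random two-block splitting, and then uses the level-one estimate on each section $f(\cdot,y)$. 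This costs a factor $\sqrt M$. So the exponent $3/2$ arises as $M\cdot\sqrt M$ at level two. It then propagates without further loss only because $\sigma_s^{-1}$ is independent of $M$ for $s\ge2$.

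Starting instead from your $M^2$ level-one bound, this route gives only $M^{5/2}$ from level two on. Even a level-one bound of order $M^{3/2}$ would give only $M^2$.
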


\begin{proof}
For $r=1$, write $f=\sum_{k=0}^Mf_k$ according to interaction order.
Extend the definition algebraically by $T_t:=E+t(I-E)$ for
$t\in\mathbb R$. On the interval $[-1/(q-1),1]$, this
one-coordinate operator is Markov.
\attribrev{For the use of this interval in spectral extraction, see
\cite[Lemmas~3.3 and~3.5]{SloteVolbergPolynomial}.}
Thus, for fixed $x$, 
\begin{equation*}
P_x(t):=(T_t^{\otimes N}f)(x)=\sum_{k=0}^Mt^kf_k(x) 
\end{equation*}
is bounded by $\|f\|_\infty$ on a fixed interval depending only on $q$.
Bernstein's inequality at the interior point $0$
\cite[equation~(4.1)]{AdcockShadrin} gives $\|f_1\|_\infty\le C(q)M\|f\|_\infty$. Writing $%
f_1=\sum_{j=1}^N h_j(x_j)$ and using Cauchy--Schwarz, Parseval and Lemma~\ref%
{lem:phase-sum}, 
\begin{equation*}
\mathcal{A}_1(f)  \le \pi\sqrt{q-1}\,\|f_1\|_\infty  \le C(q)M\|f\|_\infty 
\le C(q)M^{3/2}\|f\|_\infty. 
\end{equation*}

For $r=2$, \attribrev{we adapt the proof of Slote, Tseng and Volberg
\cite[Proposition~4.3]{SloteTsengVolberg}.} Assign each coordinate independently to $I$ or $J$ with
probability $1/2$. For a fixed partition write 
\begin{equation*}
f(x,y)=\sum_{\alpha,\xi}b_{\alpha,\xi}\Phi_\alpha(x)\Phi_\xi(y),  \qquad 
B_\alpha(y):=\sum_\xi b_{\alpha,\xi}\Phi_\xi(y). 
\end{equation*}
Here $x\in\Omega^I$, $y\in\Omega^J$, and $B_\alpha:\Omega^J\to\mathbb{C}$.
\lastrev{Consider the array $a=(b_{\alpha,\xi})_{s(\alpha)=s(\xi)=1}$ and define its}
two mixed norms by 
\begin{equation*}
X:=\sum_{s(\alpha)=1}  \left(\sum_{s(\xi)=1}|b_{\alpha,\xi}|^2\right)^{1/2},
\qquad  Y:=\sum_{s(\xi)=1} 
\left(\sum_{s(\alpha)=1}|b_{\alpha,\xi}|^2\right)^{1/2}. 
\end{equation*}
Parseval and Lemma~\ref{lem:Potts-L1-L2-quantitative} give 
\begin{align*}
X&=\sum_{s(\alpha)=1}\|P_1B_\alpha\|_2 \le C(q)\sqrt
M\sum_{s(\alpha)=1}\|B_\alpha\|_1 \\
&=C(q)\sqrt M\int_{\Omega^J} \sum_{s(\alpha)=1}|B_\alpha(y)|\,d\mu_q^J(y) \\
&=C(q)\sqrt M\int_{\Omega^J}\mathcal{A}_1(f(\cdot,y))\,d\mu_q^J(y) \le
C^{\prime}(q)M^{3/2}\|f\|_\infty.
\end{align*}
The last step uses the level-one estimate proved above on each section.
Interchanging $I$ and $J$ gives the same bound for $Y$. The case $L=2$ of
Lemma~\ref{lem:local-complement-interpolation} yields $\|a\|_{p_2}\le
X^{1/2}Y^{1/2}\le C^{\prime}(q)M^{3/2}\|f\|_\infty$. Each coefficient of
\introrev{interaction order two has one active coordinate in each block with
probability $1/2$. Let $a_\omega$ be the coefficient array indexed by
$s(\alpha)=s(\xi)=1$ for $\omega\in\Pi_{N,2}$. With the uniform measure
defined above,} 
\begin{equation*}
\frac12\mathcal{A}_2(f)^{p_2}  =\int\|a_\omega\|_{p_2}^{p_2}\,d\omega 
\le(C^{\prime}(q)M^{3/2}\|f\|_\infty)^{p_2}. 
\end{equation*}
Consequently, 
\begin{equation}  \label{eq:fixed-level-two-local}
\mathcal{A}_2(f)\le D_{q,2}M^{3/2}\|f\|_\infty.
\end{equation}

For $r=3$, partition the coordinates uniformly into three blocks
\[
I_1\dot\cup I_2\dot\cup I_3=\{1,\ldots,N\}.
\]
Write $Q$ for the same function $f$ viewed, according to this partition,
as a function on
\[
\Omega^{I_1}\times\Omega^{I_2}\times\Omega^{I_3},
\]
and write
\[
\widehat Q_\Phi
\bigl(\alpha^{(1)},\alpha^{(2)},\alpha^{(3)}\bigr)
\]
for the corresponding regrouped coefficient array. \introrev{We consider the coefficients of
multidegree $(1,1,1)$.} Thus
\[
s_j=2,
\qquad
\theta_j=\frac13,
\qquad
1\le j\le3.
\]
\lastrev{For each mixed norm, we use the complete coefficient functions of
sections of $Q$. In particular,}
\[
\|Q\|_\infty=\|f\|_\infty.
\]

Fix $j$, and write $I_{\widehat j}:=\bigcup_{k\ne j}I_k$. For a
complementary index
\[
\beta=(\alpha^{(k)})_{k\ne j},
\qquad
s(\alpha^{(k)})=1,
\]
define the coefficient function in the $j$-th block by
\[
G_\beta(x_j)
:=
\sum_{\alpha^{(j)}\in\{0,\ldots,q-1\}^{I_j}}
\widehat Q_\Phi
\bigl(\alpha^{(1)},\alpha^{(2)},\alpha^{(3)}\bigr)
\Phi_{\alpha^{(j)}}(x_j),
\]
where the complementary indices are fixed equal to $\beta$. \lastrev{The sum
defining $G_\beta$ includes every interaction level in the $j$-th block.}

Applying \eqref{eq:local-single-level-noise} to $G_\beta$ with
$s=2$ and $t=1$ gives
\[
\sigma_2
\left(
\sum_{s(\alpha^{(j)})=1}
\left|
\widehat Q_\Phi
(\alpha^{(1)},\alpha^{(2)},\alpha^{(3)})
\right|^2
\right)^{1/2}
\le
\|G_\beta\|_{p_2}.
\]
Taking the $\ell^{p_2}$ norm over the complementary indices $\beta$
and using Fubini yields
\begin{align*}
\sigma_2 X_j
&\le
\left(
\int_{\Omega^{I_j}}
\sum_{\substack{s(\alpha^{(k)})=1\\ k\ne j}}
\left|
G_\beta(x_j)
\right|^{p_2}
\,d\mu_q^{I_j}(x_j)
\right)^{1/p_2}.
\end{align*}
For each fixed $x_j$, the expression inside the integral is bounded by
the \rev{$p_2$-th power of the} level-two coefficient norm of the complementary section
\[
Q(x_j,\cdot):
\Omega^{I_{\widehat j}}\longrightarrow\mathbb C.
\]
This section still has interaction order at most $M$, and
\[
\|Q(x_j,\cdot)\|_\infty\le\|Q\|_\infty=\|f\|_\infty.
\]
Therefore \eqref{eq:fixed-level-two-local} gives
\[
\left(
\sum_{\substack{s(\alpha^{(k)})=1\\ k\ne j}}
|G_\beta(x_j)|^{p_2}
\right)^{1/p_2}
\le
D_{q,2}M^{3/2}\|f\|_\infty
\]
for every $x_j$. Consequently,
\begin{equation}
X_j
\le
\sigma_2^{-1}D_{q,2}M^{3/2}\|f\|_\infty,
\qquad
1\le j\le3.
\label{eq:fixed-level-three-Xj}
\end{equation}
Lemma~\ref{lem:local-complement-interpolation} gives the same bound for the $%
\ell^{p_3}$ norm of \introrev{the coefficient array of multidegree $(1,1,1)$.} A support of size three
falls with one active coordinate in each block with probability $3!/3^3=2/9$%
. Averaging the $p_3$-th power therefore gives 
\begin{equation}  \label{eq:fixed-level-three-local}
\mathcal{A}_3(f)\le D_{q,3}M^{3/2}\|f\|_\infty.
\end{equation}

Let now $r\ge4$ and assume \eqref{eq:fixed-local-levels} at level $r-2$. Use
\introrev{a random two-block partition and consider the coefficients of
multidegree $(2,r-2)$.} With $X$
denoting the mixed norm with outer exponent $p_2$ and $Y$ the mixed norm
with outer exponent $p_{r-2}$, \eqref{eq:local-single-level-noise} gives 
\begin{equation*}
X\le \sigma_2^{-(r-2)}D_{q,2}M^{3/2}\|f\|_\infty,  \qquad  Y\le
\sigma_{r-2}^{-2}D_{q,r-2}M^{3/2}\|f\|_\infty. 
\end{equation*}
The two-block case of Lemma~\ref{lem:local-complement-interpolation} yields 
\begin{equation*}
\|a_{2,r-2}\|_{p_r}  \le X^{2/r}Y^{(r-2)/r}  \le
R_{q,r}D_{q,2}^{2/r}D_{q,r-2}^{(r-2)/r}  M^{3/2}\|f\|_\infty, 
\end{equation*}
where 
\begin{equation*}
R_{q,r}:=  \sigma_2^{-2(r-2)/r}\sigma_{r-2}^{-2(r-2)/r}. 
\end{equation*}
The probability that exactly two of the $r$ active coordinates fall in the
first block is $\pi_r=2^{-r}\binom r2$. Averaging the $p_r$-th power gives %
\eqref{eq:fixed-local-levels} with 
\begin{equation*}
D_{q,r}:=\pi_r^{-1/p_r}R_{q,r}  D_{q,2}^{2/r}D_{q,r-2}^{(r-2)/r}. 
\end{equation*}
This completes the induction.
\end{proof}

\subsection{A fixed-block weighted estimate}

\label{sec:fixed-block-local} For $L\ge2$ put 
\begin{equation}  \label{eq:local-lambda-b}
\lambda_L:=\frac{L-1}{L},\qquad b_{q,L}:=\frac{2\gamma_q}{(L-1)\log(L/(L-1))}%
.
\end{equation}

The next lemma adapts the simultaneous row and column estimate of
\attribrev{Slote, Tseng and Volberg} \cite[%
Lemma~5.2]{SloteTsengVolberg} to the local system $\Phi$. Keeping the sum
over complementary degrees is essential: a single bidegree projection can
have exponentially larger supremum norm. Counterexamples communicated by
Lars Becker and Haonan Zhang are recorded in \cite[Appendix~B]%
{SloteTsengVolberg}.

\begin{lemma}
\label{lem:simultaneous-local-sections} Let $w(1),\ldots,w(M)>0$. Suppose
that 
\begin{equation*}
\left(\sum_{s=1}^M\frac{\mathcal{A}_s(h)^2}{w(s)^2}\right)^{1/2}  \le
H\|h\|_\infty 
\end{equation*}
for every function $h$ of interaction order at most $M$ on at most $N_0$
coordinates. Fix a partition $I\dot\cup J$ of at most $N_0$ coordinates and
a function $Q:\Omega^I\times\Omega^J\to\mathbb{C}$ of interaction order at
most $M$. Write 
\begin{equation*}
Q(x,y)=\sum_{\alpha,\xi}b_{\alpha,\xi}\Phi_\alpha(x)\Phi_\xi(y),  \qquad 
Y_{s,t}:=\left[\sum_{s(\alpha)=s} 
\left(\sum_{s(\xi)=t}|b_{\alpha,\xi}|^2\right)^{p_s/2}  \right]^{1/p_s}. 
\end{equation*}
Here $1\le s\le M$ and $0\le t\le M-s$ are integers.
Then 
\begin{equation}  \label{eq:simultaneous-local-sections}
\sum_{s=2}^M\sum_{t=0}^{M-s} \frac{\sigma_s^{2t}Y_{s,t}^2}{w(s)^2} \le
H^2\|Q\|_\infty^2.
\end{equation}
\end{lemma}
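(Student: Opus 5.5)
The plan is to prove the estimate one row level $s$ at a time. For each $s$ I would apply Potts hypercontractivity in the $y$-variables to all column levels $t$ simultaneously. This turns the left-hand side into an $L^{p_s}$ average of level-$s$ coefficient norms of the sections $Q(\cdot,y)$, to which the hypothesis applies. The order of the Hölder and Minkowski steps is what lets the sum over complementary degrees $t$ be absorbed without loss, as required in the remark preceding the lemma.

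\emph{Step 1 (one row, all column levels).} Fix $s$ with $2\le s\le M$ and $\alpha$ with $s(\alpha)=s$, and put
\[
G_\alpha(y):=\sum_\xi b_{\alpha,\xi}\Phi_\xi(y),
\]
which contains every interaction level in $y$. By Parseval, \eqref{eq:Potts-diagonal-action} and \eqref{eq:Potts-hypercontractivity} with $p=p_s$, so that $(p_s-1)^{\gamma_q}=\sigma_s$, we get
\[
\sum_{t}\sigma_s^{2t}\sum_{s(\xi)=t}|b_{\alpha,\xi}|^2=\|T_{\sigma_s}^{\otimes J}G_\alpha\|_2^2\le\|G_\alpha\|_{p_s}^2 .
\]
Set $c_{\alpha,t}:=\sigma_s^{2t}\sum_{s(\xi)=t}|b_{\alpha,\xi}|^2\ge0$ and $u:=p_s/2\le1$. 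For nonnegative sequences and $u\le1$, the $\ell^u$ quasi-norm satisfies the reverse Minkowski inequality $\sum_t\|c_{\cdot,t}\|_{\ell^u}\le\|\sum_t c_{\cdot,t}\|_{\ell^u}$. Since $\|c_{\cdot,t}\|_{\ell^u}=\sigma_s^{2t}Y_{s,t}^2$, this gives
\[
\sum_{t=0}^{M-s}\sigma_s^{2t}Y_{s,t}^2\le\Bigl(\sum_{s(\alpha)=s}\|G_\alpha\|_{p_s}^{p_s}\Bigr)^{2/p_s}.
\]

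\emph{Step 2 (pass to sections).} By Fubini,
\[
\sum_{s(\alpha)=s}\|G_\alpha\|_{p_s}^{p_s}=\int_{\Omega^J}\sum_{s(\alpha)=s}|G_\alpha(y)|^{p_s}\,d\mu_q^J(y)=\int_{\Omega^J}\mathcal A_s\bigl(Q(\cdot,y)\bigr)^{p_s}\,d\mu_q^J(y).
\]
The second equality holds because the numbers $G_\alpha(y)$ are exactly the coefficients $\widehat{Q(\cdot,y)}_\Phi(\alpha)$ of the section $x\mapsto Q(x,y)$. Since $p_s\le2$ and $\mu_q^J$ is a probability measure, Hölder's inequality bounds this $L^{p_s}$ norm by the $L^2$ norm. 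Hence
\[
\sum_{t}\sigma_s^{2t}Y_{s,t}^2\le\int_{\Omega^J}\mathcal A_s\bigl(Q(\cdot,y)\bigr)^2\,d\mu_q^J(y).
\]

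\emph{Step 3 (sum over $s$ and apply the hypothesis).} Divide by $w(s)^2$ and sum over $2\le s\le M$. Adding the nonnegative $s=1$ term only increases the integrand, so
\[
\sum_{s=2}^M\sum_{t=0}^{M-s}\frac{\sigma_s^{2t}Y_{s,t}^2}{w(s)^2}\le\int_{\Omega^J}\sum_{s=1}^M\frac{\mathcal A_s(Q(\cdot,y))^2}{w(s)^2}\,d\mu_q^J(y).
\]
Each section $Q(\cdot,y)$ lives on $|I|\le N_0$ coordinates, has interaction order at most $M$, and satisfies $\|Q(\cdot,y)\|_\infty\le\|Q\|_\infty$. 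The hypothesis therefore bounds the integrand by $H^2\|Q\|_\infty^2$, which proves \eqref{eq:simultaneous-local-sections}.

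The main point to get right is Step 1. A separate bound for each $(s,t)$, obtained from \eqref{eq:local-single-level-noise}, would leave a sum over $t$ of terms each as large as $\|G_\alpha\|_{p_s}^2$, losing a factor of about $M$. The loss is avoided by using the full Parseval identity for $T_{\sigma_s}^{\otimes J}G_\alpha$ together with reverse Minkowski in $\ell^{p_s/2}$, which is valid because $p_s/2\le1$. The remaining steps are routine, but the order of the Hölder and Minkowski steps must be kept as above.
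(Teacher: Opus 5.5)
Your proof is correct and follows the paper's argument essentially step for step. The paper applies Minkowski with exponent $2/p_s$ to the nonnegative array $\sigma_s^{2t}\sum_{s(\xi)=t}|b_{\alpha,\xi}|^2$ (your reverse Minkowski in $\ell^{p_s/2}$ is the same inequality), identifies the result with $\|T_{\sigma_s}^{\otimes J}g_\alpha\|_2$ via Parseval, and then uses Potts hypercontractivity, Fubini to reach $\mathcal A_s(Q(\cdot,y))$, $L^{p_s}\le L^2$, and the hypothesis on each section. The only difference is that you apply hypercontractivity before the Minkowski step rather than after.
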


\begin{proof}
For each complement index $\alpha$, put $g_\alpha(y)=\sum_\xi
b_{\alpha,\xi}\Phi_\xi(y)$. Fix $s\ge2$ and set $p=p_s\le2$.
Since $2/p\ge1$, Minkowski applies to the nonnegative array
$A_{\alpha,t}=\sigma_s^{2t}\sum_{s(\xi)=t}|b_{\alpha,\xi}|^2$.
The sum over $0\le t\le M-s$ contains the full complementary interaction
spectrum of $g_\alpha$, rather than an individual bidegree projection.
Minkowski's inequality, with exponent $2/p$, gives 
\begin{align*}
\left(\sum_{t=0}^{M-s}\sigma_s^{2t}Y_{s,t}^2\right)^{1/2} &\le\left[\sum_{s(\alpha)=s}
\left(\sum_{t=0}^{M-s}\sigma_s^{2t} \sum_{s(\xi)=t}|b_{\alpha,\xi}|^2\right)^{p/2} %
\right]^{1/p} \\
&=\left(\sum_{s(\alpha)=s} \|T_{\sigma_s}^{\otimes
J}g_\alpha\|_2^p\right)^{1/p} \\
&\le\left(\int_{\Omega^J}
\sum_{s(\alpha)=s}|g_\alpha(y)|^p\,d\mu_q^J(y)\right)^{1/p} \\
&=\|\mathcal{A}_s(Q(\cdot,y))\|_{L^p(y)} \le\|\mathcal{A}_s(Q(\cdot,y))%
\|_{L^2(y)}.
\end{align*}
The equality uses Parseval and the next inequality uses Potts
hypercontractivity. Divide the squared inequality by $w(s)^2$, sum over $s$,
and apply the hypothesis to each section $Q(\cdot,y)$. This gives 
\begin{equation*}
\sum_{s=2}^M\sum_{t=0}^{M-s}  \frac{\sigma_s^{2t}Y_{s,t}^2}{w(s)^2} 
\le\int_{\Omega^J}\sum_{s=2}^M  \frac{\mathcal{A}_s(Q(\cdot,y))^2}{w(s)^2}%
\,d\mu_q^J(y)  \le H^2\|Q\|_\infty^2. 
\end{equation*}
\end{proof}

\attribrev{The next proposition adapts the fixed-weight contraction argument
of \cite[Section~2]{Ivanisvili2026} and
\cite[Sections~5--8]{SloteVolbergPolynomial} to $L$ coordinate blocks.}

\begin{proposition}
\label{prop:fixed-block-local} Let $L\ge2$ and $B>b_{q,L}$. There is $%
K_{q,L,B}<\infty$ such that, for all $M,N\ge1$ and every $f:\Omega^N\to%
\mathbb{C}$ of interaction order at most $M$, 
\begin{equation}  \label{eq:fixed-block-local}
\left(\sum_{r=1}^M\frac{\mathcal{A}_r(f)^2}{r^{2B}}\right)^{1/2} \le
K_{q,L,B}M^{3/2}\|f\|_\infty.
\end{equation}
The constant is independent of $N$ and $\Phi$.
\end{proposition}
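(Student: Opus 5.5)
We argue by a bootstrap on the optimal weighted constant. For fixed $N_0$ and $M$, let $H=H(N_0,M)$ be the least constant with
\[
\Bigl(\sum_{r=1}^M \mathcal A_r(h)^2/r^{2B}\Bigr)^{1/2}\le H\,M^{3/2}\|h\|_\infty
\]
for all $h$ of interaction order at most $M$ on at most $N_0$ coordinates. $H$ is finite because the space is finite dimensional. We aim to show
\[
H\le C_{q,L,B}+\kappa H
\]
with $\kappa=\kappa_{q,L,B}<1$ independent of $N_0,M$. Letting $N_0\to\infty$ then gives \eqref{eq:fixed-block-local}.

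\emph{Low levels.} Fix a threshold $r_0=r_0(q,L,B)$. For $r\le r_0$, Proposition~\ref{prop:fixed-local-levels} gives $\mathcal A_r(f)\le D_{q,r}M^{3/2}\|f\|_\infty$. This produces the additive constant $C_{q,L,B}$.

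\emph{High levels: setup.} For $r>r_0$ we use a uniform random partition $\omega\in\Pi_{N,L}$. For each fixed $\omega$, every support of size $r$ has a unique multidegree $\mathbf d$, so
\[
\mathcal A_r(f)^{p_r}=\int\sum_{|\mathbf d|=r}\|a_{\mathbf d}(Q_\omega)\|_{p_r}^{p_r}\,d\omega .
\]
The multidegrees concentrated in one block contribute exactly $L^{1-r}\mathcal A_r(f)^{p_r}$. We move this term to the left side, at the cost of a factor $(1-L^{1-r})^{-1}$. For every other $\mathbf d$ we have $s_j\ge1$, so Lemma~\ref{lem:local-complement-interpolation} gives $\|a_{\mathbf d}\|_{p_r}\le\prod_j X_j^{\theta_j}$. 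To control $X_j$, view $Q_\omega$ as a function on $\Omega^{I}\times\Omega^{J}$ with $J=I_j$ and $I=I_{\widehat j}$. Then $X_j$ is bounded by the mixed norm $Y_{s_j,d_j}$ of Lemma~\ref{lem:simultaneous-local-sections}, since fixing multidegree inside $I$ only shrinks the outer $\ell^{p_{s_j}}$ sum. Hence
\[
X_j\le \sigma_{s_j}^{-d_j}\, w(s_j)\cdot\Bigl(\sigma_{s_j}^{d_j}Y_{s_j,d_j}/w(s_j)\Bigr),\qquad w(s)=s^B .
\]
The bracketed quantities are square-summable over $(s,t)$ with sum at most $H^2M^3\|f\|_\infty^2$, by the simultaneous section lemma applied with this $H$.

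\emph{Contraction factor.} For a balanced multidegree $d_j\approx r/L$ we have $s_j\approx\lambda_L r$. Then
\[
\sigma_{s_j}^{-d_j}=\Bigl(\tfrac{s_j+1}{s_j-1}\Bigr)^{\gamma_q d_j}\approx e^{2\gamma_q/(L-1)}
\quad\text{and}\quad
w(s_j)/w(r)\approx\lambda_L^{B}.
\]
The product $e^{2\gamma_q/(L-1)}\lambda_L^{B}$ is $<1$ exactly when $B>b_{q,L}$; this is where the hypothesis enters. Choose $\kappa$ strictly between this product and $1$, and take $r_0$ large enough that lower-order corrections such as $(1\pm 1/s)$ and $(1-L^{1-r})^{-1}$ keep all balanced contributions below $\kappa$.

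\emph{Unbalanced multidegrees.} For unbalanced $\mathbf d$ the factor $\prod_j(\sigma_{s_j}^{-d_j}(s_j/r)^B)^{\theta_j}$ can exceed $1$, but it is at most exponential in $\sum_j d_j/s_j$. It is weighted by the multinomial probability $L^{-r}\binom{r}{\mathbf d}$, which decays like $e^{-c\,r\,\mathrm{dist}^2}$. Large-deviation (Chernoff) estimates should therefore show that the total unbalanced contribution is $o(1)$ as $r_0\to\infty$.

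\emph{Assembly.} Combine the bounds via weighted AM--GM on $\prod_j X_j^{2\theta_j}$ (using $\sum_j\theta_j=1$), then Hölder/Jensen to pass from the $p_r$-average over $\omega,\mathbf d$ to the square sum over $r$, exploiting $p_r\le2$. Then sum over $r>r_0$. Each pair $(s,t)$ arises from at most $O(L)$ pairs $(r,j)$ with fixed $r=s+t$, so the square-summability of $\sigma_s^tY_{s,t}/w(s)$ yields $\sum_{r>r_0}\mathcal A_r^2/r^{2B}\le\kappa^2H^2M^3\|f\|_\infty^2$ up to the absorbed constants.

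\emph{Main obstacle.} The delicate step is this last bookkeeping. The outer exponent $p_r$ and the averaging over $\omega$ and $\mathbf d$ must be converted into the $\ell^2$-in-$r$ structure of Lemma~\ref{lem:simultaneous-local-sections} without losing polynomial factors in $r$. Such losses would shift the critical exponent, so the Hölder/Jensen steps must be done with explicit constants. I would first work out the case $L=2$ to check that the constant stays $\kappa<1$ and no power of $r$ appears, before handling general $L$.
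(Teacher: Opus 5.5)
Your skeleton matches the paper's: a bootstrap on the optimal weighted constant in dimension at most $N_0$, low levels from Proposition~\ref{prop:fixed-local-levels}, a uniform random $L$-block partition, Lemma~\ref{lem:local-complement-interpolation}, and Lemma~\ref{lem:simultaneous-local-sections} with $w(s)=s^B$. The limiting multiplier $e^{2\gamma_q/(L-1)}\lambda_L^B$ is $<1$ exactly when $B>b_{q,L}$. There are, however, two places where the argument does not close as written.

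\textbf{Unbalanced multidegrees.} Once you bound $\|a_{\mathbf d}(Q_\omega)\|_{p_r}$ by $\prod_j X_j^{\theta_j}$ and feed the $X_j$ into Lemma~\ref{lem:simultaneous-local-sections}, your only control is a per-partition budget. With $U_j(\mathbf d):=\sigma_{s_j}^{d_j}X_j(a_{\mathbf d}(Q_\omega))/s_j^B$, this budget is $\sum_{\mathbf d}U_j(\mathbf d)^2\le H^2M^3\|f\|_\infty^2$. No multinomial weight $L^{-r}r!/(d_1!\cdots d_L!)$ survives. Every multidegree you keep therefore enters the AM--GM step with its full multiplier, and the contraction you can certify is governed by the largest multiplier over the kept $\mathbf d$. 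For unbalanced $\mathbf d$ this exceeds $1$. For $\mathbf d=(r-2,2,0,\ldots,0)$ the noise factor $\prod_j\sigma_{s_j}^{-d_j\theta_j}$ tends to $3^{2\gamma_q/(L-1)}$, while $\prod_j(s_j/r)^{B\theta_j}\to1$. If some $s_j=1$, the factor $\sigma_1^{-d_j}$ is infinite, and Lemma~\ref{lem:simultaneous-local-sections} does not cover $s=1$ at all.

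The multinomial probability appears only in the exact averaged identity, before any interpolation. The paper uses it exactly as you did for the concentrated multidegrees, but for everything outside the window $\mathcal D_r=\{|d_j-r/L|\le r^{3/4}\}$:
\[
\int\sum_{\mathbf d\in\mathcal D_r}\|a_{\mathbf d}(Q_\omega)\|_{p_r}^{p_r}\,d\omega=\eta^{\mathrm{fix}}_{r,L}\,\mathcal A_r(f)^{p_r}.
\]
Unbalanced multidegrees are discarded, never estimated. The concentration bound (Chebyshev suffices, giving $\eta^{\mathrm{fix}}_{r,L}\ge1-(1-1/L)/\sqrt r$) is used only to show that the cost $(\eta^{\mathrm{fix}}_{r,L})^{-1/p_r}$ tends to $1$.

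\textbf{The bookkeeping you left open.} Bounding $X_j(a_{\mathbf d}(Q))\le Y_{s_j,d_j}$ separately for each $\mathbf d$ loses the number of multidegrees sharing $(s_j,d_j)=(s,t)$. That is the number of ways to distribute $s$ over the other $L-1$ blocks, which for $L\ge3$ is a power of $r$ and destroys the contraction. Your count of pairs $(r,j)$ does not see this, and the case $L=2$ you proposed to check first hides it. The missing ingredient is the exact decomposition
\[
\sum_{\mathbf d:\,s_j=s,\,d_j=t}X_j(a_{\mathbf d}(Q))^{p_s}=Y_{s,t}^{p_s},
\]
obtained by sorting the complement indices by their block orders. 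Since $p_s\le2$, it gives $\sum_{\mathbf d:\,s_j=s,\,d_j=t}X_j(a_{\mathbf d}(Q))^2\le Y_{s,t}^2$, hence $\sum_{\mathbf d}U_j(\mathbf d)^2\le H^2M^3\|f\|_\infty^2$ for each $j$ with no loss.

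The conversion you were worried about is harmless. Jensen in $\omega$ and the comparison of $\ell^{p_r}$ with $\ell^2$ over the multidegree index cost only $(k^{\mathrm{fix}}_{r,L})^{1/(2r)}\le(r+1)^{(L-1)/(2r)}\to1$, because $2/p_r=1+1/r$. Combined with $\prod_jU_j^{2\theta_j}\le(\max_j\theta_j)\sum_jU_j^2$ and $L\max_j\theta_j\to1$ on $\mathcal D_r$, this yields your inequality $H\le C_{q,L,B}+\kappa H$ with $\kappa<1$.
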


\begin{proof}
\lastrev{Fix an upper bound $N_0$ on the dimension} and let $\Gamma_{M,N_0}$ be the least constant
in \eqref{eq:fixed-block-local} over all local orthonormal bases and
dimensions at most $N_0$, with the factor $M^{3/2}$ removed from the
right-hand side. Equivalently, 
\begin{equation*}
\Gamma_{M,N_0}:=  \sup\frac{\left(\sum_{r=1}^M\mathcal{A}_r(f)^2/r^{2B}%
\right)^{1/2}} {\|f\|_\infty}. 
\end{equation*}
\lastrev{For $1\le r\le\min\{M,N_0\}$, H\"older's inequality and Parseval give
\[
\mathcal A_r(f)
\le\left[\binom{N_0}{r}(q-1)^r\right]^{1/(2r)}\|f\|_2
\le\left[\binom{N_0}{r}(q-1)^r\right]^{1/(2r)}\|f\|_\infty.
\]
Consequently,
\[
\Gamma_{M,N_0}^2
\le\sum_{r=1}^{\min\{M,N_0\}}
\frac{\left[\binom{N_0}{r}(q-1)^r\right]^{1/r}}{r^{2B}}
<\infty,
\]
uniformly over the local orthonormal basis.}

Partition the coordinates into $L$ blocks and let $Q$ denote the regrouped
function. For a multidegree $\mathbf{d}$ with $|\mathbf{d}|\le M$, put $s_j=|%
\mathbf{d}|-d_j$ and, when $s_j\ge2$, define 
\begin{equation}  \label{eq:local-Uj-fixed}
U_j^{(B)}(\mathbf{d}):=\frac{\sigma_{s_j}^{d_j}X_j(a_\mathbf{d}(Q))}{s_j^B}.
\end{equation}
For each $j$, 
\begin{equation}  \label{eq:local-simultaneous-fixed}
\sum_{\substack{ \mathbf{d}:\,|\mathbf{d}|\le M \\ s_j\ge2}}U_j^{(B)}(\mathbf{d}%
)^2 \le \Gamma_{M,N_0}^2\|Q\|_\infty^2.
\end{equation}
To verify this, take $J=I_j$ and $I=\bigcup_{k\ne j}I_k$ in Lemma~\ref%
{lem:simultaneous-local-sections}. Fix $s=s_j$ and $t=d_j$. The complement
indices $\alpha$ with $s(\alpha)=s$ are partitioned by their interaction
orders in the $L-1$ blocks of $I$. Thus the corresponding mixed norms
satisfy 
\begin{equation*}
\sum_{\substack{ \mathbf{d}:\,s_j=s,\ d_j=t}}  X_j(a_\mathbf{d}%
(Q))^{p_s}=Y_{s,t}^{p_s}. 
\end{equation*}
Since $p_s\le2$, 
\begin{equation*}
\sum_{\substack{ \mathbf{d}:\,s_j=s,\ d_j=t}}  X_j(a_\mathbf{d}(Q))^2\le
Y_{s,t}^2. 
\end{equation*}
Multiply by $\sigma_s^{2t}/s^{2B}$ and sum over $s,t$. Lemma~\ref%
{lem:simultaneous-local-sections}, with $w(s)=s^B$ and $H=\Gamma_{M,N_0}$,
gives \eqref{eq:local-simultaneous-fixed}.

Assign every coordinate independently and uniformly to one of the $L$
blocks. \introrev{For sufficiently large $r$ that $s_j\ge2$ on the set below,
consider the multidegrees in} 
\begin{equation}  \label{eq:local-fixed-window}
\mathcal{D}_r:=\left\{\mathbf{d}\in\mathbb{N}_0^L:\ |\mathbf{d}|=r, \
|d_j-r/L|\le r^{3/4}\ (1\le j\le L)\right\}.
\end{equation}
\lastrev{For a coefficient of $f$ with support of cardinality $r$}, the vector of
block interaction orders is multinomial $\func{Mult}(r;1/L,\ldots,1/L)$; the
local labels remain attached to their coordinates and no coefficients merge.
If 
\begin{equation*}
{\eta^{\mathrm{fix}}_{r,L}}:=L^{-r}\sum_{\mathbf{d}\in\mathcal{D}_r}\frac{r!}{d_1!\cdots d_L!}%
,  \qquad {k^{\mathrm{fix}}_{r,L}}:=|\mathcal{D}_r|, 
\end{equation*}
then 
\begin{equation}  \label{eq:local-fixed-capture}
\int\sum_{\mathbf{d}\in\mathcal{D}_r}\|a_\mathbf{d}(Q_\omega)\|_{p_r}^{p_r}
\,d\omega ={\eta^{\mathrm{fix}}_{r,L}}\,\mathcal{A}_r(f)^{p_r}.
\end{equation}
Moreover 
\begin{equation}  \label{eq:local-fixed-prob-count}
{\eta^{\mathrm{fix}}_{r,L}}\ge1-\frac{1-1/L}{\sqrt r},\qquad {k^{\mathrm{fix}}_{r,L}}\le(r+1)^{L-1}.
\end{equation}
\introrev{
For a fixed support $S\subseteq\{1,\ldots,N\}$ with $|S|=r$, put
\[
Z_j(\omega):=\sum_{k\in S}\mathbf1_{\{\omega(k)=j\}}
\qquad(\omega\in\Pi_{N,L},\ 1\le j\le L).
\]
Independence of the coordinate assignments gives
\begin{align*}
\int_{\Pi_{N,L}}Z_j\,d\nu_{N,L}&=\frac rL,\\
\int_{\Pi_{N,L}}(Z_j-r/L)^2\,d\nu_{N,L}
&=\sum_{k\in S}\int_{\Pi_{N,L}}
(\mathbf1_{\{\omega(k)=j\}}-1/L)^2\,d\nu_{N,L}\\
&=\frac rL\left(1-\frac1L\right).
\end{align*}
The cross terms vanish because each centered indicator has integral zero.
Therefore
\[
\int_{\Pi_{N,L}}\sum_{j=1}^L(Z_j-r/L)^2\,d\nu_{N,L}
=r\left(1-\frac1L\right).
\]
If $(Z_1,\ldots,Z_L)\notin\mathcal D_r$, this sum exceeds $r^{3/2}$.
Markov's inequality consequently gives
\[
1-\eta^{\mathrm{fix}}_{r,L}
\le\frac{r(1-1/L)}{r^{3/2}}
=\frac{1-1/L}{\sqrt r},
\]
which proves the first estimate in \eqref{eq:local-fixed-prob-count}.
}

For $\mathbf{d}\in\mathcal{D}_r$ put $\theta_j=s_j/((L-1)r)$ and 
\begin{equation}  \label{eq:local-fixed-R}
R_q(\mathbf{d}):=\prod_{j=1}^L\sigma_{s_j}^{-d_j\theta_j}.
\end{equation}
Lemma~\ref{lem:local-complement-interpolation} and \eqref{eq:local-Uj-fixed}
give 
\begin{equation}  \label{eq:local-fixed-profile}
\frac{\|a_\mathbf{d}(Q)\|_{p_r}}{r^B} \le R_q(\mathbf{d})\frac{%
\introrev{\prod_{j=1}^L}s_j^{B\theta_j}}{r^B} \introrev{\prod_{j=1}^L}U_j^{(B)}(\mathbf{d})^{\theta_j}.
\end{equation}
Define 
\begin{align}
\mathcal K_q(r,\mathbf{d})&:=\bigl(\eta^{\mathrm{fix}}_{r,L}\bigr)^{-1/p_r}\bigl(k^{\mathrm{fix}}_{r,L}\bigr)^{1/(2r)} R_q(\mathbf{d})\frac{%
\introrev{\prod_{j=1}^L}s_j^{B\theta_j}}{r^B},  \label{eq:local-fixed-K} \\
\kappa_q(r,\mathbf{d})&:=\sqrt{L\introrev{\max_{1\le j\le L}}\theta_j}\,\mathcal K_q(r,\mathbf{d}).
\label{eq:local-fixed-kappa}
\end{align}
For each $r$, Jensen's inequality in the partition variable and the
comparison of the finite $\ell^{p_r}$ and $\ell^2$ norms give 
\begin{equation*}
\frac{\mathcal{A}_r(f)^2}{r^{2B}}  \le\bigl(\eta^{\mathrm{fix}}_{r,L}\bigr)^{-2/p_r}\bigl(k^{\mathrm{fix}}_{r,L}\bigr)^{1/r} 
\int\sum_{\mathbf{d}\in\mathcal{D}_r}  \frac{\|a_\mathbf{d}%
(Q_\omega)\|_{p_r}^2}{r^{2B}}\,d\omega. 
\end{equation*}
By \eqref{eq:local-fixed-profile} and weighted arithmetic--geometric mean,
each summand, including its scalar factor, is at most 
\begin{equation*}
\mathcal K_q(r,\mathbf{d})^2\introrev{\prod_{j=1}^L}U_j^{(B)}(\mathbf{d})^{2\theta_j}  \le \mathcal K_q(r,\mathbf{d}%
)^2\introrev{\sum_{j=1}^L}\theta_jU_j^{(B)}(\mathbf{d})^2  \le\frac{\kappa_q(r,\mathbf{d})^2}{L}%
\introrev{\sum_{j=1}^L}U_j^{(B)}(\mathbf{d})^2. 
\end{equation*}
Consequently, if $\kappa_q(r,\mathbf{d})\le c<1$ for every $r\ge R$ and $%
\mathbf{d}\in\mathcal{D}_r$, summing first over $r,\mathbf{d}$ and then
using \eqref{eq:local-simultaneous-fixed} yields 
\begin{equation}  \label{eq:local-fixed-high}
\sum_{r=R}^M\frac{\mathcal{A}_r(f)^2}{r^{2B}} \le\frac{c^2}{L}\sum_{j=1}^L
\int\sum_{\substack{ \mathbf{d}:\,|\mathbf{d}|\le M \\ s_j\ge2}}U_j^{(B)}(\mathbf{d%
})^2\,d\omega \le c^2\Gamma_{M,N_0}^2\|f\|_\infty^2.
\end{equation}

It remains to verify the strict multiplier bound. Put $\lambda=\lambda_L$, $%
a_r=r^{-1/4}$ and $v_r=[r^2(\lambda-a_r)^2]^{-1}$. On %
\eqref{eq:local-fixed-window}, 
\begin{equation*}
\lambda-a_r\le \frac{s_j}{r}\le\lambda+a_r,  \qquad  L\introrev{\max_{1\le j\le L}}\theta_j\le1+%
\frac{a_r}{\lambda}. 
\end{equation*}
Since 
\begin{equation*}
s\log\frac{s+1}{s-1}  \le \frac{2}{1-s^{-2}},\qquad s>1, 
\end{equation*}
we have 
\begin{equation}  \label{eq:local-fixed-noise}
\log R_q(\mathbf{d}) =\frac{\gamma_q}{(L-1)r} \introrev{\sum_{j=1}^L}d_js_j\log\frac{s_j+1}{%
s_j-1} \le\frac{2\gamma_q}{(L-1)(1-v_r)}.
\end{equation}
Also 
\begin{equation}  \label{eq:local-fixed-degree-weight}
\log\frac{\introrev{\prod_{j=1}^L}s_j^{B\theta_j}}{r^B} =B\introrev{\sum_{j=1}^L}\theta_j\log(s_j/r) \le
B\log\lambda+B\log(1+a_r/\lambda).
\end{equation}
Set 
\begin{equation}  \label{eq:local-fixed-eta}
\eta_{q,L,B}:=B\log(1/\lambda)-\frac{2\gamma_q}{L-1}>0.
\end{equation}
Using \eqref{eq:local-fixed-prob-count}--\eqref{eq:local-fixed-degree-weight}
in \eqref{eq:local-fixed-kappa}, for all sufficiently large $r$ we obtain
\begin{equation}\label{eq:fixed-block-explicit-error}
\log\kappa_q(r,\mathbf d)\le-\eta_{q,L,B}+E_{q,L,B}(r),
\end{equation}
where, with $a_r=r^{-1/4}$, $\lambda=(L-1)/L$, and
$v_r=[r^2(\lambda-a_r)^2]^{-1}$,
\begin{align*}
E_{q,L,B}(r):={}&\left(B+\frac12\right)
 \log\left(1+\frac{a_r}{\lambda}\right)
 +\frac{2\gamma_q}{L-1}\frac{v_r}{1-v_r}\\
&+\frac{L-1}{2r}\log(r+1)
 -\frac{r+1}{2r}\log\left(1-\frac{1-1/L}{\sqrt r}\right).
\end{align*}
Indeed, the first logarithm bounds together the degree-weight
correction and $\frac12\log(L\max_j\theta_j)$, the second term
is the excess over $2\gamma_q/(L-1)$ in
\eqref{eq:local-fixed-noise}, while the last two terms bound the
multidegree-count and capture factors. Each term tends to zero for
fixed $q,L,B$ as $r\to\infty$, and none depends on $\mathbf d$.
Consequently, some $R=R(q,L,B)$ satisfies $s_j\ge2$ on
$\mathcal D_r$ and $E_{q,L,B}(r)\le\eta_{q,L,B}/2$ whenever
$r\ge R$, so that
\begin{equation*}
\kappa_q(r,\mathbf d)\le c:=e^{-\eta_{q,L,B}/2}<1.
\end{equation*}

For the Boolean choice $q=2$, $L=3$, $B=5/4$, this contraction
can be certified without a limiting argument or numerical estimates.
Put $\mu=\frac54\log(3/2)-\frac12$. The positive-term expansion
\begin{equation*}
\log\frac32
 =2\sum_{n=0}^{\infty}\frac{5^{-(2n+1)}}{2n+1}
 >2\left(\frac15+\frac1{375}\right)=\frac{152}{375}
\end{equation*}
shows that $\mu>1/150$. For $r\ge10^{12}$ we have
$a_r\le1/1000$, $\lambda-a_r>1/2$, and
$v_r\le4r^{-2}<1/2$. The elementary inequalities
$\log(1+x)\le x$ for $x\ge0$,
$-\log(1-x)\le2x$ for $0\le x\le1/2$, and
$\log(1+r)\le\sqrt r$ for $r>0$ yield
\begin{align*}
E_{2,3,5/4}(r)
&\le\frac{21}{8}r^{-1/4}
   +\frac4{r^2}+\frac1{\sqrt r}+\frac4{3\sqrt r}\\
&\le\frac{21}{8000}+\frac7{3\cdot10^6}
   +\frac4{10^{24}}<\frac1{300}.
\end{align*}
The latter inequality compares only rational numbers. Thus
\begin{equation}\label{eq:fixed-block-boolean-explicit-contraction}
\log\kappa_2(r,\mathbf d)
 <-\frac1{150}+\frac1{300}=-\frac1{300},
\qquad r\ge10^{12},\quad \mathbf d\in\mathcal D_r.
\end{equation}
In this particular case one may therefore take $R=10^{12}$ and
$c=e^{-1/300}$ in \eqref{eq:local-fixed-high}.

For $r<R$, Proposition~\ref{prop:fixed-local-levels} gives 
\begin{equation*}
\left(\sum_{r=1}^{\min\{M,R-1\}}\frac{\mathcal{A}_r(f)^2}{r^{2B}}\right)^{1/2}  \le
H_{q,L,B}M^{3/2}\|f\|_\infty 
\end{equation*}
with $H_{q,L,B}<\infty$. Combining this estimate with %
\eqref{eq:local-fixed-high} and taking the supremum in the definition of $%
\Gamma_{M,N_0}$ yields 
\begin{equation*}
\Gamma_{M,N_0}^2  \le H_{q,L,B}^2M^3+c^2\Gamma_{M,N_0}^2. 
\end{equation*}
Thus 
\begin{equation*}
\Gamma_{M,N_0}\le \frac{H_{q,L,B}}{\sqrt{1-c^2}}M^{3/2}. 
\end{equation*}
The bound is independent of $N_0$; letting $N_0\to\infty$ proves %
\eqref{eq:fixed-block-local}.
\end{proof}

\section{Proof of Theorem A}
\label{sec:proof-A}

\introrev{We adapt the weighted contraction argument of Slote, Tseng and Volberg
\cite[Sections~5--8]{SloteTsengVolberg}, using
Lemma~\ref{lem:simultaneous-local-sections}, the weight $W_q$ below and
a number of coordinate blocks of order $\log M$.}

\subsection{The critical degree weight}

\label{sec:critical-weight} We use $L\ge3$ coordinate blocks and let their
\lastrev{number grow with the bound $M$ on the interaction order.}

Fix $0<a<1/20$. For an integer $K_q\ge1$, to be chosen below, set 
\begin{equation}  \label{eq:critical-local-weight}
W_q(r):=r^{2\gamma_q}(1+\log r)^{K_q}\qquad(r\ge1).
\end{equation}
For $M,N\ge1$, let $\Gamma_{M,N}^{(q)}$ be the least constant such that 
\begin{equation}  \label{eq:critical-local-Gamma}
\left(\sum_{r=1}^M\frac{\mathcal{A}_r(f)^2}{W_q(r)^2}\right)^{1/2} \le
\Gamma_{M,N}^{(q)}\|f\|_\infty
\end{equation}
for every local orthonormal basis and every function of dimension at most $N$
and interaction order at most $M$. Parseval and coefficient counting make
this supremum finite.

\introrev{
Let $\mathcal I$ be a nonempty finite index set. For each
$\ell\in\mathcal I$, let
$\theta_\ell=(\theta_{1,\ell},\ldots,\theta_{L,\ell})$ have positive entries
with $\sum_{j=1}^L\theta_{j,\ell}=1$.
If $u_{j,\ell}\ge0$ satisfy $\sum_{\ell\in\mathcal I}u_{j,\ell}\le1$
for $1\le j\le L$, then} 
\begin{equation}  \label{eq:local-entropy-sum}
\introrev{\sum_{\ell\in\mathcal I}}\prod_{j=1}^Lu_{j,\ell}^{\theta_{j,\ell}} \le \exp\left(\introrev{\sup_{\ell\in\mathcal I}}
D(\theta_\ell\|\mathbf{u}_L)\right),
\end{equation}
where $\mathbf{u}_L=(1/L,\ldots,1/L)$ and $D(\theta\|\mathbf{u}%
_L)=\introrev{\sum_{j=1}^L}\theta_j\log(L\theta_j)$. Indeed, concavity of the logarithm gives 
\begin{equation*}
\introrev{\prod_{j=1}^L} u_{j,\ell}^{\theta_{j,\ell}}  \le e^{D(\theta_\ell\|\mathbf{u}%
_L)}\frac1L\introrev{\sum_{j=1}^L}u_{j,\ell}, 
\end{equation*}
and summation in $\ell$ proves \eqref{eq:local-entropy-sum}.

Put 
\begin{equation}  \label{eq:critical-local-window}
\mathcal{C}_r(L):= \left\{\mathbf{d}\in\mathbb{N}_0^L:\ |\mathbf{d}|=r,\quad
\left|\frac{d_j}{r}-\frac1L\right|\le\frac aL \ (1\le j\le L)\right\}.
\end{equation}
For every coefficient with support size $r$, a uniform random assignment of
coordinates to the $L$ blocks gives the multinomial distribution $\func{Mult}%
(r;1/L,\ldots,1/L)$ for the block interaction orders. Thus, with 
\begin{equation*}
\eta_{r,L}:=L^{-r}\sum_{\mathbf{d}\in\mathcal{C}_r(L)}\frac{r!}{d_1!\cdots
d_L!}, 
\end{equation*}
Bernstein's inequality \cite[Section~2.8]{BoucheronLugosiMassart} and a
union bound give $c,C>0$, depending only on the fixed parameter $a$, such that 
\begin{equation}  \label{eq:critical-local-capture}
\eta_{r,L}\ge1-CL\exp(-cr/L).
\end{equation}
For $\mathbf{d}\in\mathcal{C}_r(L)$ put 
\begin{equation*}
y_j:=\frac{s_j}{r},\qquad  \theta_j:=\frac{s_j}{(L-1)r}. 
\end{equation*}
Writing $\lambda=(L-1)/L$ and $\delta_j=d_j/r-1/L$, one has 
\begin{equation}  \label{eq:critical-local-balance}
y_j=\lambda-\delta_j,\qquad |\delta_j|\le a/L,\qquad \introrev{\sum_{j=1}^L}\delta_j=0,
\end{equation}
and $\theta_j-1/L=-\delta_j/(L-1)$. The inequality $\log u\le u-1$ gives 
\begin{align*}
D(\theta\|\mathbf{u}_L) &\le\introrev{\sum_{j=1}^L}\theta_j(L\theta_j-1)
=L\introrev{\sum_{j=1}^L}(\theta_j-1/L)^2 \\
&=\frac{L}{(L-1)^2}\introrev{\sum_{j=1}^L}\delta_j^2 \le\frac{a^2}{(L-1)^2}.
\end{align*}
In particular, 
\begin{equation}  \label{eq:critical-local-entropy}
D(\theta\|\mathbf{u}_L)\le \rev{\frac{a^2}{(L-1)^2}}.
\end{equation}

For a regrouped function $Q$ of dimension at most $N$ and interaction
order at most $M$, and a multidegree $\mathbf d$ with
$|\mathbf d|\le M$ and $s_j\ge2$, define 
\begin{equation}  \label{eq:critical-local-Uj}
U_j(\mathbf{d}):= \frac{\sigma_{s_j}^{d_j}X_j(a_\mathbf{d}(Q))}{W_q(s_j)}.
\end{equation}
Apply Lemma~\ref{lem:simultaneous-local-sections} with $w(s)=W_q(s)$ and $%
H=\Gamma_{M,N}^{(q)}$. Partitioning the complement indices as in the proof
of \eqref{eq:local-simultaneous-fixed} gives, for each $j$, 
\begin{equation}  \label{eq:critical-local-simultaneous}
\sum_{\substack{ \mathbf{d}:\,|\mathbf{d}|\le M \\ s_j\ge2}}U_j(\mathbf{d}%
)^2 \le (\Gamma_{M,N}^{(q)})^2\|Q\|_\infty^2.
\end{equation}
By Lemma~\ref{lem:local-complement-interpolation}, 
\begin{equation}  \label{eq:critical-local-profile}
\frac{\|a_\mathbf{d}(Q)\|_{p_r}}{W_q(r)} \le \mathcal{M}_q(r,\mathbf{d})
\prod_{j=1}^LU_j(\mathbf{d})^{\theta_j},
\end{equation}
where 
\begin{equation}  \label{eq:critical-local-multiplier}
\mathcal{M}_q(r,\mathbf{d}):= \prod_{j=1}^L\sigma_{s_j}^{-d_j\theta_j}
\prod_{j=1}^L\left(\frac{W_q(s_j)}{W_q(r)}\right)^{\theta_j}.
\end{equation}

We estimate the multiplier for $r\ge L^4$. Define 
\begin{equation*}
h_q(s):=\gamma_q s\log\frac{s+1}{s-1}. 
\end{equation*}
The elementary integral estimate used in \eqref{eq:local-fixed-noise} gives 
\begin{equation*}
2\gamma_q\le h_q(s)\le\frac{2\gamma_q}{1-s^{-2}}. 
\end{equation*}
Since $s_j=ry_j\ge r/2$ on \eqref{eq:critical-local-window}, 
\begin{align}
-\sum_{j=1}^Ld_j\theta_j\log\sigma_{s_j}
&=\frac1{(L-1)r}\sum_{j=1}^Ld_jh_q(s_j)  \notag \\
&\le \frac{2\gamma_q}{L-1}+\frac{C(q)}{L^9}.  \label{eq:critical-local-noise}
\end{align}
For the power part of $W_q$, the identity $L\theta_j=y_j/\lambda$ gives 
\begin{equation*}
\introrev{\sum_{j=1}^L}\theta_j\log y_j  =\log\lambda+D(\theta\|\mathbf{u}_L). 
\end{equation*}
Thus \eqref{eq:critical-local-entropy} implies 
\begin{equation}  \label{eq:critical-local-power}
2\gamma_q\sum_{j=1}^L\theta_j\log y_j \le 2\gamma_q\log\lambda
+\rev{\frac{2\gamma_q a^2}{(L-1)^2}}.
\end{equation}
Put $T:=1+\log M$ and $t:=1+\log r\le T$. Since $y_j\le1-(1-a)/L$, we have $%
\log y_j\le-(1-a)/L$. Also 
\begin{equation*}
\log(1+\log s_j)-\log(1+\log r)  =\log\left(1+\frac{\log y_j}{t}\right)  \le%
\frac{\log y_j}{t}  \le-\frac{1-a}{LT}. 
\end{equation*}
Multiplying by $K_q\theta_j$ and summing gives 
\begin{align}
&K_q\sum_{j=1}^L\theta_j \bigl[\log(1+\log s_j)-\log(1+\log r)\bigr]  \notag
\\
&\qquad\le -\frac{(1-a)K_q}{LT}.  \label{eq:critical-local-logweight}
\end{align}
Combining \eqref{eq:critical-local-noise}--%
\eqref{eq:critical-local-logweight}, 
\begin{equation}  \label{eq:critical-local-multiplier-bound}
\log\mathcal{M}_q(r,\mathbf{d}) \le \frac{2\gamma_q}{L-1} +2\gamma_q\log%
\frac{L-1}{L} -\frac{(1-a)K_q}{LT}
+\rev{\frac{2\gamma_q a^2}{(L-1)^2}+\frac{C(q)}{L^9}}.
\end{equation}
For a general fixed power $r^\eta$, the leading loss takes the form
\begin{equation}
\frac{2\gamma_q}{L-1}
+\eta\log\frac{L-1}{L}
=\frac{2\gamma_q-\eta}{L}+O_{q,\eta}(L^{-2}).
\label{eq:critical-general-power}
\end{equation}
Thus the choice $\eta=2\gamma_q$ cancels the term of order $L^{-1}$:
\begin{equation}
\frac{2\gamma_q}{L-1}
+2\gamma_q\log\frac{L-1}{L}
=2\gamma_q\bigl(u-\log(1+u)\bigr)=O_q(L^{-2}),
\qquad u:=\frac1{L-1}.
\label{eq:critical-leading-cancellation}
\end{equation}
\introrev{With $L\asymp\delta T$, the logarithmic term in
\eqref{eq:critical-local-logweight} has order $T^{-2}$, as does the
remaining positive error. After fixing $a$ and $\delta$, we choose $K_q$
so that the multiplier is less than one.}
A smaller fixed power leaves a positive term of order $L^{-1}$,
which this logarithmic correction cannot absorb.

The averaging identity corresponding to \eqref{eq:critical-local-capture} is 
\begin{equation}  \label{eq:critical-local-capture-identity}
\int\sum_{\mathbf{d}\in\mathcal{C}_r(L)} \|a_\mathbf{d}(Q_\omega)%
\|_{p_r}^{p_r}\,d\omega =\eta_{r,L}\mathcal{A}_r(f)^{p_r}.
\end{equation}
Let $k_{r,L}=|\mathcal{C}_r(L)|$. Since the first $L-1$ entries determine
the last, 
\begin{equation}  \label{eq:critical-local-cardinality}
k_{r,L}\le(r+1)^{L-1},\qquad \frac{\log k_{r,L}}{2r}\le C\frac{\log L}{L^3}
\quad(r\ge L^4).
\end{equation}
Moreover \eqref{eq:critical-local-capture} gives 
\begin{equation}  \label{eq:critical-local-probability-loss}
-\frac1{p_r}\log \eta_{r,L}\le CL\exp(-cL^3) \quad(r\ge L^4)
\end{equation}
for all sufficiently large $L$.

\lastrev{We now sum over the multidegrees in $\mathcal C_r(L)$.} For a
fixed $r$, set 
\begin{equation*}
Z_{r,\omega}:=  \sum_{\mathbf{d}\in\mathcal{C}_r(L)}  \left(\frac{\|a_%
\mathbf{d}(Q_\omega)\|_{p_r}}{W_q(r)}\right)^{p_r}. 
\end{equation*}
Since $2/p_r=1+1/r$, Jensen's inequality in $\omega$ and the
finite-dimensional comparison
$\|z\|_{p_r}\le k_{r,L}^{1/(2r)}\|z\|_2$ give 
\begin{align}
\frac{\mathcal{A}_r(f)^2}{W_q(r)^2} &=\eta_{r,L}^{-2/p_r}\left(\int
Z_{r,\omega}\,d\omega\right)^{2/p_r}  \notag \\
&\le \eta_{r,L}^{-2/p_r} \int Z_{r,\omega}^{2/p_r}\,d\omega  \notag \\
&\le \eta_{r,L}^{-2/p_r}k_{r,L}^{1/r} \int\sum_{\mathbf{d}\in\mathcal{C}%
_r(L)} \left(\frac{\|a_\mathbf{d}(Q_\omega)\|_{p_r}}{W_q(r)}\right)^2d\omega.
\label{eq:critical-local-holder-open}
\end{align}
Insert \eqref{eq:critical-local-profile}. For $f\ne0$ and each partition $%
\omega$ put 
\begin{equation*}
G_\omega:=\Gamma_{M,N}^{(q)}\|Q_\omega\|_\infty 
=\Gamma_{M,N}^{(q)}\|f\|_\infty,  \qquad  u_{j,(r,\mathbf{d})}:=\frac{U_j(%
\mathbf{d})^2}{G_\omega^2}. 
\end{equation*}
By \eqref{eq:critical-local-simultaneous}, for every $j$, 
\begin{equation*}
\sum_{r=1}^M\sum_{\substack{ \mathbf{d}:\,|\mathbf{d}|=r \\ s_j\ge2}} 
u_{j,(r,\mathbf{d})}\le1. 
\end{equation*}
Thus the index $\ell$ in \eqref{eq:local-entropy-sum} may be taken to be the
pair $(r,\mathbf{d})$.
Apply \eqref{eq:local-entropy-sum} to
$\mathcal I=\{(r,\mathbf d):L^4\le r\le M,\ \mathbf d\in\mathcal C_r(L)\}$.
For each partition $\omega$, \eqref{eq:critical-local-simultaneous}
and \eqref{eq:critical-local-entropy} give
\begin{equation*}
\sum_{r=L^4}^{M}\sum_{\mathbf d\in\mathcal C_r(L)}
\prod_{j=1}^{L} U_j(\mathbf d)^{2\theta_j}
\le \exp\!\left(\frac{a^2}{(L-1)^2}\right)
\bigl(\Gamma_{M,N}^{(q)}\bigr)^2\|f\|_\infty^2.
\end{equation*}
Sum \eqref{eq:critical-local-holder-open} over $r\ge L^4$
and take the supremum of the scalar multiplier. The preceding estimate
gives 
\begin{align}
&\sum_{r=L^4}^M\frac{\mathcal{A}_r(f)^2}{W_q(r)^2}  \notag \\
&\quad\le \exp\!\left(\rev{\frac{a^2}{(L-1)^2}}\right) \sup_{\substack{ L^4\le r\le M
\\ \mathbf{d}\in\mathcal{C}_r(L)}} \left[ \eta_{r,L}^{-2/p_r}k_{r,L}^{1/r} 
\mathcal{M}_q(r,\mathbf{d})^2 \right] (\Gamma_{M,N}^{(q)})^2\|f\|_\infty^2.
\label{eq:critical-local-entropy-closed}
\end{align}
Consequently 
\begin{equation}  \label{eq:critical-local-high}
\left(\sum_{r=L^4}^M\frac{\mathcal{A}_r(f)^2}{W_q(r)^2}\right)^{1/2} \le
\kappa_{q,M} \Gamma_{M,N}^{(q)}\|f\|_\infty,
\end{equation}
where one may take 
\begin{equation*}
\kappa_{q,M}:=  \exp\!\left(\rev{\frac{a^2}{2(L-1)^2}}\right)  \sup_{\substack{ L^4\le
r\le M \\ \mathbf{d}\in\mathcal{C}_r(L)}} 
\eta_{r,L}^{-1/p_r}k_{r,L}^{1/(2r)}\mathcal{M}_q(r,\mathbf{d}). 
\end{equation*}
Equations \eqref{eq:critical-local-multiplier-bound}, %
\eqref{eq:critical-local-cardinality}, and %
\eqref{eq:critical-local-probability-loss} therefore yield 
\begin{align}
\log\kappa_{q,M} \le{}& \frac{2\gamma_q}{L-1} +2\gamma_q\log\frac{L-1}{L} -%
\frac{(1-a)K_q}{LT}
+\rev{\frac{(2\gamma_q+1/2)a^2}{(L-1)^2}}  \notag \\
&+\rev{\frac{C(q)}{L^9}}+C\frac{\log L}{L^3}+CL\exp(-cL^3).  \label{eq:critical-local-total-loss}
\end{align}

Choose 
\begin{equation}  \label{eq:critical-local-L}
L:=\max\{3,\lfloor\delta T\rfloor\}
\end{equation}

with a fixed $\delta>0$, chosen before $K_q$. For sufficiently large $M$,
$L\sim\delta T$ and $L^4\le M$. Moreover, on $\mathcal C_r(L)$ with
$r\ge L^4$, one has $s_j\ge r/2\ge2$, so all radii used above are defined.
Absorbing the positive terms of \eqref{eq:critical-local-total-loss}
into a constant $C^*_{q,a}$ independent of $K_q$, we obtain
\begin{equation*}
\log\kappa_{q,M}\le
\left(\frac{C^*_{q,a}}{\delta^2}
-\frac{(1-a)K_q}{\delta}+o(1)\right)T^{-2}.
\end{equation*}
Choose an integer $K_q$ with
$(1-a)K_q\delta>C^*_{q,a}+1$.
The coefficient is then strictly negative for all sufficiently large $M$.
Since $L/T\to\delta$, there is $c_q>0$ such that
\begin{equation}\label{eq:critical-local-strict}
\kappa_{q,M}\le\exp(-c_q/L^2).
\end{equation}

The parameters $a$ and $\delta$ are fixed before $K_q$.
Consequently, the positive error constants and the resulting $c_q$
are independent of $M$, $N$ and the local orthonormal basis.

{
For $q=2$, the sharper constants in
\eqref{eq:critical-local-total-loss} allow the explicit choices
\[
a=\frac1{40},\qquad \delta=1,\qquad K_2=1.
\]
Indeed, $L/T\to1$ and
\[
L^2\left(\frac1{L-1}+\log\frac{L-1}{L}\right)\longrightarrow\frac12.
\]
The last three error terms in \eqref{eq:critical-local-total-loss} are
$o(L^{-2})$, \lastrev{uniformly over $r\ge L^4$ and $\mathbf d\in\mathcal C_r(L)$.}
Since $2\gamma_2=1$, we obtain
\begin{equation}
\label{eq:boolean-critical-contraction}
\limsup_{M\to\infty}L^2\log\kappa_{2,M}
\le \frac12+\frac32a^2-(1-a)
=-\frac{1517}{3200}<0.
\end{equation}
Thus \eqref{eq:critical-local-strict} holds with $K_2=1$.
\par}

\subsection{\lastrev{The weighted estimate}}

\lastrev{For the levels $r<L^4$, fix $L_0=3$ and choose $B_{0,q}>b_{q,3}$.}
\rev{For $q=2$, we take $B_{0,2}=5/4$, since
$b_{2,3}=1/(2\log(3/2))<5/4$.}
Proposition~\ref{prop:fixed-block-local} gives 
\begin{equation}  \label{eq:critical-local-baseline}
\left(\sum_{r=1}^M\frac{\mathcal{A}_r(f)^2}{r^{2B_{0,q}}}\right)^{1/2} \le
C(q)M^{3/2}\|f\|_\infty.
\end{equation}
Since $B_{0,q}>2\gamma_q$ and $r<L^4$, 
\begin{equation*}
\frac1{W_q(r)}  \le \frac{r^{B_{0,q}-2\gamma_q}}{r^{B_{0,q}}}  \le \frac{%
L^{4(B_{0,q}-2\gamma_q)}}{r^{B_{0,q}}}. 
\end{equation*}
Hence 
\begin{equation}  \label{eq:critical-local-low}
\left(\sum_{r=1}^{\min\{M,L^4-1\}}\frac{\mathcal{A}_r(f)^2}{W_q(r)^2}\right)^{1/2} \le
C(q)M^{3/2}L^{A_q}\|f\|_\infty, \qquad A_q:=4(B_{0,q}-2\gamma_q).
\end{equation}
Combining \eqref{eq:critical-local-high}, \eqref{eq:critical-local-strict}
and \eqref{eq:critical-local-low}, and taking the supremum in %
\eqref{eq:critical-local-Gamma}, gives 
\begin{equation*}
(\Gamma_{M,N}^{(q)})^2  \le C(q)M^3L^{2A_q} 
+e^{-2c_q/L^2}(\Gamma_{M,N}^{(q)})^2. 
\end{equation*}
Since $1-e^{-2c_q/L^2}\ge c^{\prime }_q/L^2$ for large $L$, 
\begin{equation}  \label{eq:critical-local-Gamma-final}
\Gamma_{M,N}^{(q)} \le C(q)M^{3/2}(1+\log M)^{A_q+1},
\end{equation}
uniformly in $N$ and $\Phi$.

For bounded $M$, the same bound follows from Proposition~\ref%
{prop:fixed-local-levels} by increasing $C(q)$.

\subsection{Completion of the proof}

Set $M=d$ and write $f_r=\sum_{s(\alpha)=r}\widehat f_\Phi(\alpha)\Phi_\alpha
$. Since $p_r\le p_d$ for $r\le d$, 
\begin{equation*}
\|\widehat{f_r}_\Phi\|_{p_d}\le\mathcal{A}_r(f). 
\end{equation*}
H\"older over the degree index and $1/p_d-1/2=1/(2d)$ give 
\begin{align*}
\left(\sum_{1\le s(\alpha)\le d} |\widehat
f_\Phi(\alpha)|^{p_d}\right)^{1/p_d} &\le d^{1/(2d)} \left(\sum_{r=1}^d%
\mathcal{A}_r(f)^2\right)^{1/2} \\
&\le d^{1/(2d)}W_q(d)\Gamma_{d,N}^{(q)}\|f\|_\infty.
\end{align*}
The constant coefficient is bounded by $\|f\|_\infty$. Therefore, after
increasing $C(q)$ and setting $K(q):=K_q+A_q+1$, 
\begin{equation}  \label{eq:unified-BH-final}
\mathsf{B}_d^{\le}(\Omega,\Phi) \le
C(q)d^{\frac32+2\gamma_q}(1+\log d)^{K(q)}.
\end{equation}

\rev{For $q=2$, the choices above give $A_2=4(5/4-1)=1$ and
$K(2)=K_2+A_2+1=3$. This proves the explicit Boolean bound
\eqref{eq:boolean-main-consequence}.}

The \rev{homogeneous} estimate follows by restriction. This proves Theorem~A.

\section{Auxiliary results for the Hardy--Littlewood transfer}
\label{sec:HL-auxiliary}

Fix $q\ge2$, $d\ge2$ and a local orthonormal basis $\Phi$ as in
Theorem~A. For Hardy--Littlewood inequalities we use the
coefficient-dependent rescaling and entropy cancellation of \cite[%
Section~3.2]{PellegrinoTeixeiraHL}, together with a local
square-function estimate \rev{for homogeneous expansions}.

For 
\begin{equation*}
g(x)=\sum_{s(\alpha)=d}a_\alpha\Phi_\alpha(x) 
\end{equation*}
define 
\begin{equation}  \label{eq:S-local-definition}
S_\Phi(g):= \sum_{j=1}^N \left(\sum_{\substack{ s(\alpha)=d \\ j\in\func{supp%
}(\alpha)}}|a_\alpha|^2\right)^{1/2}.
\end{equation}

\attribrev{The next lemma is a local analogue of the derivative
square-function estimate in \cite[Lemma~3.1]{PellegrinoTeixeiraHL}.}

\begin{lemma}
\label{lem:mixed-one-block} For every \rev{homogeneous expansion $g$ of order $d$} on $%
\Omega^N$, 
\begin{equation}  \label{eq:mixed-local-estimate}
S_\Phi(g) \le \pi\sqrt{q-1}\,d(2d-1)\,\rev{e^{2\gamma_q(d-1)}}\|g\|_\infty.
\end{equation}
The constant is independent of $N$ and of the orthonormal basis $\Phi$.
\end{lemma}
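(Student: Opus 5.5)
Fix a coordinate $j$ and split $g$ into the part containing $j$ and the rest:
\[
g(x)=\sum_{u=1}^{q-1}\varphi_u(x_j)\,G_{j,u}(x_{\widehat j})+R_j(x_{\widehat j}),
\]
where each $G_{j,u}$ is a homogeneous expansion of order $d-1$ in the remaining coordinates. Write
\[
h_j(x_j;x_{\widehat j})=\sum_{u\ge1}\varphi_u(x_j)G_{j,u}(x_{\widehat j}).
\]
Parseval in all variables gives
\[
\sum_{j\in\operatorname{supp}\alpha}|a_\alpha|^2\Big|_{j\ \text{fixed}}=\|h_j\|_{L_2(\Omega^N)}^2 .
\]
Hence $S_\Phi(g)=\sum_j\|h_j\|_2$, and the plan has three stages: lower the norm from $L_2$ to $L_1$, linearize in $j$ for fixed complementary variables, and control that linear part by Lemma~\ref{lem:phase-sum}.

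\textbf{Step 1: from $L_2$ to $L_1$.} Each $h_j$ has interaction order exactly $d$. Because $h_j$ is linear in the local mean-zero space at coordinate $j$, we can separate that coordinate, apply \eqref{cyc:reverse-moment} to the order-$(d-1)$ functions in the other variables, and recombine with the local $L_2$ norm at $j$. This should give
\[
\|h_j\|_2\le e^{2\gamma_q(d-1)}\,\|h_j\|_{L_2(x_j)L_1(x_{\widehat j})},
\]
and this is the source of the factor $e^{2\gamma_q(d-1)}$. At coordinate $j$ we use the crude local bound
\[
\|h_j(\cdot;y)\|_{L_2(x_j)}\le\|h_j(\cdot;y)\|_{L_\infty(x_j)}.
\]
Together these give
\[
S_\Phi(g)\le e^{2\gamma_q(d-1)}\int\sum_j\|h_j(\cdot;y)\|_\infty\,dy .
\]
The $y$ here is ambiguous because it depends on $j$. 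To fix this I would instead freeze a full point $y\in\Omega^N$ and consider the one-variable perturbations $x_j\mapsto g(y_1,\dots,x_j,\dots,y_N)$. The mean-zero part of that map in $x_j$ is exactly $h_j(x_j;y_{\widehat j})$. After an averaging argument this yields
\[
S_\Phi(g)\le e^{2\gamma_q(d-1)}\int_{\Omega^N}\sum_j\|h_j(\cdot;y_{\widehat j})\|_\infty\,d\mu(y).
\]

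\textbf{Step 2: linearization.} For fixed $y$, the functions $x_j\mapsto h_j(x_j;y_{\widehat j})$ are mean zero. Their sum $\sum_j h_j(x_j;y_{\widehat j})$ is the degree-one part of $z\mapsto g(z)$ in the Potts expansion around the "point" $y$. More precisely, define
\[
F_y(x)=g\bigl(\text{mix of }x\text{ and }y\bigr)
\]
via the Markov operators $T_t$ with a coordinatewise random choice. I would realize the first-order part by a polynomial in a parameter $t$: consider
\[
P(t)=\sum_{k=0}^d t^k c_k(x,y),
\]
obtained by applying a Markov kernel that keeps $y_i$ with probability $1-t$ and resamples to $x_i$ with probability $t$, averaged over a generic $y$. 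The coefficient of $t^1$ is $\sum_j h_j(x_j;y)$ plus terms that vanish because of exact homogeneity. One has $|P(t)|\le\|g\|_\infty$ on an interval of fixed length. Markov's inequality for the derivative at the endpoint $0$ then gives
\[
|P'(0)|\le 2d^2\|g\|_\infty/\text{length},
\]
so $d(2d-1)$ should arise from a Markov-type bound. The exact constant $d(2d-1)$ suggests using the even/odd extension $t=s^2$ on $[-1,1]$, giving degree $2d$ and the derivative $2d(2d-1)/2$ at the relevant point.

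\textbf{Step 3: conclusion.} Lemma~\ref{lem:phase-sum} gives
\[
\sum_j\|h_j(\cdot;y)\|_\infty\le\pi\max_x\Bigl|\sum_j h_j(x_j;y)\Bigr|.
\]
The remaining factor $\sqrt{q-1}$ comes from comparing $L_2$ and $L_\infty$ on the $(q-1)$-dimensional local mean-zero space, via $\sum_u|\cdot|$ and Cauchy--Schwarz. Combining Steps 1--3 gives
\[
S_\Phi(g)\le \pi\sqrt{q-1}\,d(2d-1)\,e^{2\gamma_q(d-1)}\|g\|_\infty .
\]

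\textbf{Main obstacle.} The hardest step is Step 2: producing the first-order part from $g$ evaluated at genuine points of $\Omega^N$ with only a polynomial-in-$d$ loss, while keeping the Potts reverse moment confined to $d-1$ variables. I would first try the resampling-kernel polynomial together with the Markov inequality, and check that the homogeneity kills the lower-order terms.
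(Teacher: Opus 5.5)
\textbf{Verdict: there is a genuine gap in Step~2.} Your overall architecture matches the paper's proof:
\begin{itemize}
\item split $S_\Phi(g)$ coordinatewise via Parseval;
\item lower $L_2$ to $L_1$ with \eqref{cyc:reverse-moment} at order $d-1$;
\item freeze $y$ and use the coordinatewise mixture $\nu_t^{x,y}=\bigotimes_k(t\delta_{x_k}+(1-t)\delta_{y_k})$;
\item finish with Lemma~\ref{lem:phase-sum} and Cauchy--Schwarz.
\end{itemize}
The gap is in Step~2, where you located the difficulty. There are three problems.
\begin{itemize}
\item $y$ must stay fixed. Averaging over $y$ collapses $P(t)=\int g\,d\nu_t^{x,y}$ to $t^d g(x)$ and destroys the linear term.
\item The $t^1$ coefficient is not $\sum_j h_j(x_j;y_{\widehat j})$ plus vanishing terms. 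Differentiating the product formula gives
\[
P'(0)=\sum_{j=1}^N h_j(x_j;y_{\widehat j})-d\,g(y).
\]
Exact homogeneity does not kill the correction $\sum_j h_j(y_j;y_{\widehat j})$; it turns it into $d\,g(y)$.
\item No derivative estimate based only on $\sup_{[0,1]}|P|\le\|g\|_\infty$ can yield $d(2d-1)$. Markov's inequality gives $|P'(0)|\le 2d^2\|g\|_\infty$, and this is sharp for $P(t)=T_d(2t-1)$. The even extension does not help: $R(s)=P(s^2)$ has $R''(0)=2P'(0)$, and $|R''(0)|=4d^2$ for $R=T_{2d}$. So the claimed derivative value $2d(2d-1)/2$ is false.
\end{itemize}
If you repair your route with the triangle inequality, you get $\max_x|\sum_j h_j(x_j;y_{\widehat j})|\le d(2d+1)\|g\|_\infty$. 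That proves the lemma only with $d(2d+1)$ in place of $d(2d-1)$.

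\textbf{The missing idea} is to read the linear part as a Bernstein coefficient rather than as a derivative. Let $\zeta_\Phi$ denote the local evaluation vectors and $L_g$ the symmetric $d$-linear form of the block-square-free polynomial $P_g$. Then
\[
P(t)=\sum_{m=0}^d\binom dm t^m(1-t)^{d-m}L_g\bigl(\zeta_\Phi(x)^m,\zeta_\Phi(y)^{d-m}\bigr).
\]
The $m=1$ coefficient is exactly $b_1=\frac1d\sum_j h_j(x_j;y_{\widehat j})$, with no $g(y)$ correction. The Lubinsky--Ziegler estimate $\binom dm|b_m|\le\binom{2d}{2m}\sup_{[0,1]}|P|$ at $m=1$ gives $|b_1|\le 2d-1$, which is where $d(2d-1)$ comes from. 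Since $d\,b_1=P'(0)+d\,P(0)$, this is strictly finer than bounding $P'(0)$ and $P(0)$ separately. Its proof uses $t=\cos^2\phi$, which is probably what your $t=s^2$ heuristic was reaching for.

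\textbf{A smaller point in Step~1.} Applying \eqref{cyc:reverse-moment} to each $G_{j,u}$ separately and recombining does not give the mixed $L_1(x_{\widehat j})L_2(x_j)$ bound you state, because Minkowski's inequality runs the other way. You have two options:
\begin{itemize}
\item prove a vector-valued reverse moment inequality using the positivity of $T_\rho$; or
\item do as the paper does: bound $S_\Phi(g)\le\sum_{j,u}\|G_{j,u}\|_2$, apply the scalar estimate to each $G_{j,u}$, and pay the factor $\sqrt{q-1}$ once, in \eqref{eq:B1-uniform}, when passing from $\sum_{j,u}|G_{j,u}(y)|$ to $\pi\sqrt{q-1}\max_x|\sum_j h_j(x_j;y_{\widehat j})|$.
\end{itemize}
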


\begin{proof}
For $1\le j\le N$ and $1\le u\le q-1$, define
$g_{j,u}:\Omega^N\to\mathbb C$ by 
\begin{equation*}
g_{j,u}(y):=  \sum_{\substack{ s(\alpha)=d \\ \alpha_j=u}} 
a_\alpha\prod_{\substack{1\le k\le N\\k\ne j}}\varphi_{\alpha_k}(y_k). 
\end{equation*}
This function is independent of $y_j$. Parseval gives 
\begin{equation*}
\|g_{j,u}\|_2^2  =\sum_{\substack{ s(\alpha)=d \\ \alpha_j=u}}|a_\alpha|^2, 
\end{equation*}
so 
\begin{equation}  \label{eq:S-by-gju}
S_\Phi(g)\le\sum_{j=1}^N\sum_{u=1}^{q-1}\|g_{j,u}\|_2.
\end{equation}
Each $g_{j,u}$ \rev{has a homogeneous expansion of order $d-1$}.
\rev{Applying \eqref{cyc:reverse-moment} with $M=d-1$ gives}
\begin{equation}  \label{eq:gju-L2-L1}
\|g_{j,u}\|_2\le\rev{e^{2\gamma_q(d-1)}}\|g_{j,u}\|_1.
\end{equation}

Fix $y\in\Omega^N$. Associate with $g$ the $d$-homogeneous polynomial 
\begin{equation*}
P_g:(\mathbb{C}^{q-1})^N\to\mathbb{C},\qquad
P_g(z)=\sum_{s(\alpha)=d}a_\alpha\prod_{j\in\func{supp}\alpha}z_{j,%
\alpha_j}, 
\end{equation*}
which contains at most one variable from each coordinate block. Let $L_g:((%
\mathbb{C}^{q-1})^N)^d\to\mathbb{C}$ be its symmetric $d$-linear form, so $%
P_g(z)=L_g(z,\ldots,z)$. For $x=(x_1,\ldots,x_N)\in\Omega^N$, define the
local evaluation vector 
\begin{equation*}
\zeta_\Phi(x):=(v_{x_1},\ldots,v_{x_N})  \in(\mathbb{C}^{q-1})^N,  \qquad 
v_\xi:=(\varphi_1(\xi),\ldots,\varphi_{q-1}(\xi))\quad(\xi\in\Omega). 
\end{equation*}
For fixed $y\in\Omega^N$, define the mixed evaluation
\begin{equation*}
h_y(x)
:=
L_g\bigl(\zeta_\Phi(x),\zeta_\Phi(y)^{d-1}\bigr),
\qquad
x\in\Omega^N.
\end{equation*}
\introrev{Differentiating $P_g(z)=L_g(z,\ldots,z)$ at
$z=\zeta_\Phi(y)$ gives
\begin{align*}
dh_y(x)
&=DP_g(\zeta_\Phi(y))[\zeta_\Phi(x)]\\
&=\sum_{j=1}^N\sum_{u=1}^{q-1}
\frac{\partial P_g}{\partial z_{j,u}}(\zeta_\Phi(y))\varphi_u(x_j)\\
&=\sum_{j=1}^N\sum_{u=1}^{q-1}g_{j,u}(y)\varphi_u(x_j).
\end{align*}}
Thus $h_y$ \rev{has a homogeneous expansion of order one} and its coefficient
at the local index $(j,u)$ is $g_{j,u}(y)/d$.

\attribrev{We use the random-coordinate mixing argument of
\cite{PellegrinoRaposoCyclic}; the corresponding Boolean
mixed-polarization argument appears in
\cite{DefantMastyloPerez}.}
\lastrev{Fix $x,y\in\Omega^N$ and $0\le t\le1$. On $\Omega^N$ put
\[
\nu_t^{x,y}:=\bigotimes_{k=1}^N
\bigl(t\delta_{x_k}+(1-t)\delta_{y_k}\bigr),
\]
where $\delta_\xi$ is the point mass at $\xi$.
Since each term contains at most one factor from each coordinate,
\begin{align}
\int_{\Omega^N}g(z)\,d\nu_t^{x,y}(z)
&=\sum_{s(\alpha)=d}a_\alpha
\prod_{j\in\func{supp}(\alpha)}
\bigl(t\varphi_{\alpha_j}(x_j)+(1-t)\varphi_{\alpha_j}(y_j)\bigr)\notag\\
&=P_g\bigl(t\zeta_\Phi(x)+(1-t)\zeta_\Phi(y)\bigr)\notag\\
&=\sum_{m=0}^d\binom dm t^m(1-t)^{d-m}
L_g\bigl(\zeta_\Phi(x)^m,\zeta_\Phi(y)^{d-m}\bigr).
\label{eq:Bernstein-mixed-representation}
\end{align}
The triangle inequality gives
\[
\left|\int_{\Omega^N}g\,d\nu_t^{x,y}\right|
\le\int_{\Omega^N}|g|\,d\nu_t^{x,y}
\le\|g\|_\infty,
\qquad
0\le t\le1.
\]}
Moreover, the Bernstein coefficient corresponding to $m=1$ in
\eqref{eq:Bernstein-mixed-representation} is exactly
\[
L_g\bigl(\zeta_\Phi(x),\zeta_\Phi(y)^{d-1}\bigr)
=
h_y(x).
\]

For complex coefficients, multiply the polynomial by a unimodular scalar
so that the coefficient under consideration becomes nonnegative real,
and then take real parts. This operation does not increase the supremum
on $[0,1]$. The coefficient estimate of Lubinsky--Ziegler
\cite{LubinskyZiegler}, also given by Defant, Masty{\l}o and P\'erez
\cite{DefantMastyloPerez} in an equivalent normalization,
therefore gives, at $m=1$,
\[
|h_y(x)|
\le
\frac{\binom{2d}{2}}{\binom d1}\,
\|g\|_\infty
=
(2d-1)\|g\|_\infty.
\]
Since $x$ was arbitrary,
\begin{equation}
\label{eq:mixed-polarization-one}
\|h_y\|_\infty
\le
(2d-1)\|g\|_\infty.
\end{equation}
For a \rev{homogeneous expansion of order one}, $h=\sum_{j=1}^N h_j(x_j)$, Cauchy--Schwarz,
Parseval and Lemma~\ref{lem:phase-sum} give the basis-free estimate 
\begin{equation}  \label{eq:B1-uniform}
\sum_{s(\alpha)=1}|\widehat h_\Phi(\alpha)| \le\pi\sqrt{q-1}\,\|h\|_\infty.
\end{equation}
Applying this to $h_y$, whose coefficients are $g_{j,u}(y)/d$, yields 
\begin{equation*}
\sum_{j=1}^N\sum_{u=1}^{q-1}|g_{j,u}(y)|  \le\pi\sqrt{q-1}\,d(2d-1)\|g\|_\infty. 
\end{equation*}
Integrating in $y$, using \eqref{eq:gju-L2-L1}, and then \eqref{eq:S-by-gju}
proves \eqref{eq:mixed-local-estimate}.
\end{proof}

For $2d\le p\le\infty$ put 
\begin{equation*}
\rho:=\rho_{d,p},\qquad \tau:=\frac{2d}{p}, 
\end{equation*}
with $\tau=0$ at $p=\infty$ and $p_d=2d/(d+1)$ as before. Then
\begin{equation}  \label{eq:transfer-exponent-identities}
\frac1\rho=\frac1{p_d}-\frac1p, \qquad \frac1\rho-\frac12=\frac1{2d}-\frac1p.
\end{equation}

\Needspace{9\baselineskip}

\begin{lemma}
\label{lem:local-rescaling} Let $2d\le p<\infty$ and 
\begin{equation*}
f(x)=\sum_{s(\alpha)=d}a_\alpha\Phi_\alpha(x)\ne0. 
\end{equation*}
Set 
\begin{equation*}
A:=\left(\sum_{s(\alpha)=d}|a_\alpha|^\rho\right)^{1/\rho},  \qquad  \mu_\alpha:=%
\frac{|a_\alpha|^\rho}{A^\rho}, 
\end{equation*}
and 
\begin{equation*}
t_j:=\frac1d\sum_{\substack{ s(\alpha)=d \\ j\in\func{supp}(\alpha)}}%
\mu_\alpha. 
\end{equation*}
Then $t=(t_j)$ is a probability vector. Define 
\begin{equation*}
Q(x):=\sum_{s(\alpha)=d}a_\alpha
\left(\prod_{j\in\func{supp}(\alpha)}t_j^{1/p}\right)\Phi_\alpha(x), 
\end{equation*}
omitting coordinates with $t_j=0$. Then 
\begin{equation}  \label{eq:Q-local-norm}
\|Q\|_\infty\le\|f\|_{p,\Phi}
\end{equation}
and 
\begin{equation}  \label{eq:local-rescaling-conclusion}
A\le \|\widehat Q_\Phi\|_{\ell^{p_d}}^{1-\tau} \left(\frac{S_\Phi(Q)}{\sqrt d}\right)^\tau.
\end{equation}
\end{lemma}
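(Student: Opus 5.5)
The first two claims are direct verifications. The main inequality \eqref{eq:local-rescaling-conclusion} will come from a lower bound for $S_\Phi(Q)$ followed by one H\"older step, in which the powers of $t_j$ should cancel (the entropy cancellation of \cite[Section~3.2]{PellegrinoTeixeiraHL}).

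\emph{Probability vector.} Each $\alpha$ with $s(\alpha)=d$ has exactly $d$ active coordinates. Hence
\[
\sum_j t_j=\frac1d\sum_\alpha\mu_\alpha\,d=\sum_\alpha\mu_\alpha=1 .
\]

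\emph{Norm bound \eqref{eq:Q-local-norm}.} Put $r_j:=t_j^{1/p}$, and $r_j=0$ when $t_j=0$. Then $\|\mathbf r\|_p^p=\sum_j t_j=1$. Coordinates with $t_j=0$ carry no coefficient, since $\mu_\alpha>0$ forces $t_j>0$ for every $j\in\operatorname{supp}\alpha$. Therefore $Q(x)=\widetilde f(\mathbf r,x)$ for every $x$, and the definition of $\|f\|_{p,\Phi}$ gives \eqref{eq:Q-local-norm}.

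\emph{Lower bound for $S_\Phi(Q)$.} Write $b_\alpha=a_\alpha\prod_{j\in\operatorname{supp}\alpha}t_j^{1/p}=A\mu_\alpha^{1/\rho}\prod_{j\in\operatorname{supp}\alpha}t_j^{1/p}$. For each $j$ I apply Cauchy--Schwarz with weights $\mu_\alpha^{1/2}$, using $\sum_{\alpha\ni j}\mu_\alpha=dt_j$:
\[
\Bigl(\sum_{\alpha\ni j}|b_\alpha|^2\Bigr)^{1/2}\ge (dt_j)^{-1/2}\sum_{\alpha\ni j}|b_\alpha|\mu_\alpha^{1/2}.
\]
I then sum over $j$, exchange the order of summation, and use AM--GM on $\sum_{j\in\operatorname{supp}\alpha}t_j^{-1/2}\ge d\prod_{j\in\operatorname{supp}\alpha}t_j^{-1/(2d)}$. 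This should give
\[
\frac{S_\Phi(Q)}{\sqrt d}\ge A\sum_\alpha\mu_\alpha^{1/\rho+1/2}\prod_{j\in\operatorname{supp}\alpha}t_j^{1/p-1/(2d)}.
\]
On the other side, $\|\widehat Q_\Phi\|_{p_d}^{p_d}=A^{p_d}\sum_\alpha\mu_\alpha^{p_d/\rho}\prod_{j\in\operatorname{supp}\alpha} t_j^{p_d/p}$.

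\emph{Combining the two sides.} I use H\"older over $\alpha$ to bound $1=\sum_\alpha\mu_\alpha$ by a product of powers of these two sums, with weights attached to $(1-\tau)/p_d$ and $\tau$. A check of the $t$-exponents is encouraging:
\[
\frac{1-\tau}{p_d}\cdot\frac{p_d}{p}+\tau\Bigl(\frac1p-\frac1{2d}\Bigr)=\frac1p-\frac{\tau}{2d}=0,
\]
because $\tau=2d/p$. So the product weights $\prod_j t_j^{(\cdot)}$ cancel exactly, and the powers of $A$ collect to $A^{1-\tau}A^{\tau}=A$.

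\emph{Main obstacle.} The weights $(1-\tau)/p_d$ and $\tau$ do not sum to $1$, and the resulting $\mu$-exponent is not automatically $1$. The exponent identities \eqref{eq:transfer-exponent-identities} have to be used carefully here. I would first rebalance the Cauchy--Schwarz weights, replacing $\mu_\alpha^{1/2}$ by $\mu_\alpha^{\beta}$ together with a matching normalisation. I would then fix $\beta$ so that the H\"older exponents become conjugate while the $t$-cancellation is kept. If that fails, I would use $\mu_\alpha\le1$ and $\sum\mu_\alpha=1$ to absorb a surplus power of $\mu_\alpha$ in the correct direction.

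Finally, the endpoint $p=2d$, where $\tau=1$, $\rho=2$ and only the square-function factor survives, serves as a consistency check on the exponents.
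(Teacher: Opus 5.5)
Your argument is correct and takes a different route from the paper's. The ``main obstacle'' you flag does not actually arise. For $\mu_\alpha>0$ put $Z_\alpha:=\log\mu_\alpha-\sum_{j\in\operatorname{supp}\alpha}\log t_j$. Since $p_d/\rho=1-p_d/p$ and $1/\rho-1/2=1/(2d)-1/p$, the two sums you produce are
\[
X=\sum_\alpha\mu_\alpha e^{-(p_d/p)Z_\alpha},\qquad
Y=\sum_\alpha\mu_\alpha e^{(1/(2d)-1/p)Z_\alpha},
\]
and you need $X^{u}Y^{v}\ge1$ with $u=(1-\tau)/p_d$ and $v=\tau$.

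Apply H\"older with the normalized exponents $\theta=u/(u+v)$ and $1-\theta$. Each term becomes $\mu_\alpha e^{cZ_\alpha}$ with $(u+v)c=-(1-\tau)/p+\tau\bigl(1/(2d)-1/p\bigr)=0$. This is exactly the cancellation you already checked for the $t$-exponents. Hence $1=\sum_\alpha\mu_\alpha\le X^\theta Y^{1-\theta}$, and raising to the power $u+v>0$ gives \eqref{eq:local-rescaling-conclusion}.

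In your original variables, this says that the $\mu$-exponent $1/\rho+\tau/2$ equals $u+v$. That is the interpolation identity $1/\rho=(1-\tau)/p_d+\tau/2$, a restatement of \eqref{eq:transfer-exponent-identities}. So no rebalancing of the Cauchy--Schwarz weights is needed. Your fallback via $\mu_\alpha\le1$ would only have worked for an exponent at most $1$. In the Cauchy--Schwarz step, sum only over $j$ with $t_j>0$ and over $\alpha$ with $\mu_\alpha>0$; this loses nothing.

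The paper instead applies Jensen to each factor separately. It proves $\|\widehat Q_\Phi\|_{\ell^{p_d}}\ge Ae^{-I/p}$ and $S_\Phi(Q)/\sqrt d\ge A\exp\{(1/(2d)-1/p)I\}$, where $I=dH(t)-H(\mu)=\sum_\alpha\mu_\alpha Z_\alpha$. The second bound comes from a double Jensen argument, over $j$ with weights $t_j$ and over $\alpha$ given $j$. It then cancels $I$ between the two powers. Jensen applied to your $X$ and $Y$ yields precisely these two bounds. So your Cauchy--Schwarz plus AM--GM lower bound for $S_\Phi(Q)$ is at least as strong as the paper's entropy bound, and your single H\"older step replaces the entropy cancellation. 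The paper's version packages the cancellation in one scalar $I$, matching the entropy formalism it borrows from Pellegrino--Teixeira. Your version needs no entropies, and it makes visible that the only exponent input is the interpolation identity for $\rho$.
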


\begin{proof}
Since \rev{every index with a nonzero coefficient has exactly $d$ active coordinates}, $%
\sum_jt_j=1$. Hence $(t_j^{1/p})_j$ has $\ell_p$ norm one and %
\eqref{eq:Q-local-norm} follows from the definition of $\|\cdot\|_{p,\Phi}$.
If $a_\alpha\ne0$, then $t_j>0$ for every $j\in\func{supp}(\alpha)$,
so $Q\ne0$. \introrev{Throughout this proof, $\sum_\alpha$ is over
$\alpha\in\{0,\ldots,q-1\}^N$ with $s(\alpha)=d$, and $\sum_j$ is over
$1\le j\le N$. In $\sum_{j,\alpha}$ we also require
$j\in\operatorname{supp}(\alpha)$. Logarithmic terms are evaluated on
positive masses, with $0\log0=0$ in the entropy expressions.}

Put 
\begin{equation*}
H(t):=-\sum_jt_j\log t_j,  \qquad 
H(\mu):=-\sum_\alpha\mu_\alpha\log\mu_\alpha,  \qquad  I:=dH(t)-H(\mu). 
\end{equation*}
The entropy calculation is the one used in \cite[Lemma~3.2]%
{PellegrinoTeixeiraHL}; \rev{here each radial variable has exponent one
in every term in which it occurs}. Since $p_d/\rho=1-p_d/p$, 
\begin{align*}
\left(\frac{\|\widehat Q_\Phi\|_{\ell^{p_d}}}{A}\right)^{p_d} &=\sum_\alpha
\mu_\alpha^{p_d/\rho} \prod_{j\in\func{supp}(\alpha)}t_j^{p_d/p} \\
&=\sum_\alpha\mu_\alpha \exp\left\{\frac{p_d}{p} \left(\sum_{j\in\func{supp}%
(\alpha)}\log t_j-\log\mu_\alpha\right)\right\}.
\end{align*}
\lastrev{The definition of $t_j$ gives}
\begin{equation*}
\sum_\alpha\mu_\alpha  \sum_{j\in\func{supp}(\alpha)}\log t_j 
=d\sum_jt_j\log t_j=-dH(t),  \qquad 
\sum_\alpha\mu_\alpha\log\mu_\alpha=-H(\mu). 
\end{equation*}
\lastrev{Jensen's inequality for the exponential therefore gives
\[
\left(\frac{\|\widehat Q_\Phi\|_{\ell^{p_d}}}{A}\right)^{p_d}
\ge\exp\left(-\frac{p_d I}{p}\right),
\]
and hence}
\begin{equation}  \label{eq:Q-r0-lower}
\|\widehat Q_\Phi\|_{\ell^{p_d}}\ge Ae^{-I/p}.
\end{equation}

Next set 
\begin{equation*}
b_\alpha:=a_\alpha\prod_{i\in\func{supp}(\alpha)}t_i^{1/p},  \qquad 
S_j:=\left(\sum_{\alpha:\,j\in\func{supp}(\alpha)}|b_\alpha|^2\right)^{1/2}. 
\end{equation*}
Thus $S_\Phi(Q)=\sum_jS_j$. On pairs $(j,\alpha)$ with $j\in\func{supp}%
(\alpha)$ put 
\begin{equation*}
\lastrev{w_{j,\alpha}:=\frac{\mu_\alpha}{d}},  \qquad  \beta_{\alpha\mid j}:=\frac{%
\mu_\alpha}{dt_j}\quad(t_j>0). 
\end{equation*}
\lastrev{These weights satisfy
\[
\sum_{\alpha:\,j\in\func{supp}(\alpha)}w_{j,\alpha}=t_j,
\qquad
\sum_{j\in\func{supp}(\alpha)}w_{j,\alpha}=\mu_\alpha,
\qquad
\sum_{\alpha:\,j\in\func{supp}(\alpha)}\beta_{\alpha\mid j}=1
\quad(t_j>0).
\]}
Weighted
Jensen applied first to $\sum_jS_j/A$ with weights $t_j$, and then to $%
S_j^2/A^2$ with weights $\beta_{\alpha\mid j}$, gives 
\lastrev{\begin{align*}
\log\frac{S_\Phi(Q)}A
&=\log\sum_{j:t_j>0}t_j\frac{S_j}{At_j}
\ge H(t)+\sum_jt_j\log\frac{S_j}{A} \\
&\ge H(t)+\frac12 \sum_{j,\alpha}w_{j,\alpha} \log\left( \frac{%
\mu_\alpha^{2/\rho} \prod_{i\in\func{supp}(\alpha)}t_i^{2/p}} {%
\beta_{\alpha\mid j}} \right).
\end{align*}}
\lastrev{Using these identities and $\beta_{\alpha\mid j}=\mu_\alpha/(dt_j)$, the}
last expression equals 
\begin{align*}
\frac12\log d +\left(\frac12-\frac dp\right)H(t)
-\left(\frac1\rho-\frac12\right)H(\mu) &=\frac12\log d
+\left(\frac1{2d}-\frac1p\right)I,
\end{align*}
where \eqref{eq:transfer-exponent-identities} is used in the last step.
Hence 
\begin{equation}  \label{eq:S-entropy-lower}
\frac{S_\Phi(Q)}{\sqrt d} \ge
A\exp\left\{\left(\frac1{2d}-\frac1p\right)I\right\}.
\end{equation}
Raising \eqref{eq:Q-r0-lower} to $1-\tau$ and \eqref{eq:S-entropy-lower} to $%
\tau$, the entropy factors cancel because 
\begin{equation*}
-\frac{1-\tau}{p}+\tau\left(\frac1{2d}-\frac1p\right)=0. 
\end{equation*}
This proves \eqref{eq:local-rescaling-conclusion}.
\end{proof}

\section{Theorem B and its consequences}
\label{sec:proof-B}

\subsection{Proof of Theorem B}

\begin{proof}
Assume first $2d\le p<\infty$ and let $f\ne0$ \rev{have a homogeneous
expansion of order $d$}; the zero function is immediate. Apply
Lemma~\ref{lem:local-rescaling}. Since $Q$ \rev{has the same property}, 
\begin{equation*}
\|\widehat Q_\Phi\|_{\ell^{p_d}}  \le\mathsf{B}_d^{=} (\Omega,\Phi)\|Q\|_\infty. 
\end{equation*}
Lemma~\ref{lem:mixed-one-block} gives 
\begin{equation*}
\frac{S_\Phi(Q)}{\sqrt d}  \le \Lambda_{d,q}\|Q\|_\infty. 
\end{equation*}
Together with \eqref{eq:Q-local-norm}, this proves 
\begin{equation*}
\mathsf{H}_{d,p}(\Phi)  \le  \mathsf{B}_d^{=} (\Omega,\Phi)^{1-\tau} 
\Lambda_{d,q}^{\tau}. 
\end{equation*}
At $p=\infty$, $\tau=0$ and the estimate follows from $\|f\|_\infty\le\|f%
\|_{\infty,\Phi}$ and the definition of $\mathsf{B}_d^{=} (\Omega,\Phi)$.
\end{proof}

\subsection{Growth regimes}

\begin{lemma}
\label{lem:local-lower} For every uniform local orthonormal basis, $\mathsf{B%
}_d^{=}(\Omega,\Phi)\ge1$. For $p\ge2d$, $\mathsf{H}_{d,p}(\Phi)\ge d^{d/p}$.
\end{lemma}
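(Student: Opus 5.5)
Both lower bounds will come from explicit test functions; no earlier machinery is needed beyond the definitions.

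For $\mathsf B_d^{=}(\Omega,\Phi)\ge1$, take $N=d$ and the single product function $f=\Phi_\alpha$ with $\alpha=(1,\ldots,1)$. Its only nonzero coefficient is $1$, so the left-hand side of the Bohnenblust--Hille inequality equals $1$. It remains to check that $\|f\|_\infty\ge$ the coefficient norm, i.e.\ we need $\|\Phi_\alpha\|_\infty\le$ something comparable; but this goes the wrong way, since $\|\varphi_1\|_\infty\ge\|\varphi_1\|_2=1$. Instead I would use the general fact that for any $f$ the coefficient $\widehat f_\Phi(\alpha)$ satisfies $|\widehat f_\Phi(\alpha)|\le\|f\|_2\le\|f\|_\infty$ only as an upper bound. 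For a lower bound on the constant we need a function whose coefficient norm dominates its sup norm. The clean choice is to take $f=\Phi_\alpha/\|\Phi_\alpha\|_\infty$? That gives ratio $1/\|\varphi_1\|_\infty^d\le1$, again useless. So instead take a function concentrated at a point: on $\Omega^d$ let $g=\prod_{j}(q\mathbf 1_{\{\xi\}}(x_j)-1)$, which is mean zero in each coordinate and therefore a homogeneous expansion of order $d$. Its sup norm is $(q-1)^d$, while $\|g\|_2^2=(q-1)^d$, and its coefficients $\widehat g(\alpha)=\prod_j\overline{\varphi_{\alpha_j}(\xi)}$ over $(q-1)^d$ indices. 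Since $p_d\le2$, $\|\widehat g\|_{p_d}\ge\|\widehat g\|_2=(q-1)^{d/2}$. This gives only $(q-1)^{-d/2}$, so I would fall back on the simplest choice that is certainly $\ge1$: by Parseval and $p_d\le 2$, any $f$ has $\|\widehat f\|_{p_d}\ge\|f\|_2$. Hence it suffices to find a homogeneous $f$ with $\|f\|_2=\|f\|_\infty$, i.e.\ $|f|$ constant. For $q=2$ a Walsh monomial works. In general, I would choose $h=\sum_{u\ge1}c_u\varphi_u$ of unimodular modulus with mean zero: for instance $h(x)=\omega^{k(x)}$ with $\omega=e^{2\pi i/q}$ and $k:\Omega\to\{0,\ldots,q-1\}$ a bijection. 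Then $\sum_x h(x)=0$, so $h\perp1$ and $|h|\equiv1$. The tensor product $f=h^{\otimes d}$ on $\Omega^d$ is homogeneous of order $d$, $|f|\equiv1$, and $\|\widehat f\|_{p_d}\ge\|f\|_2=1=\|f\|_\infty$. This finishes the first claim.

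For $\mathsf H_{d,p}(\Phi)\ge d^{d/p}$ with $p\ge2d$, take $N=d$ and the same $f=h^{\otimes d}$, expanded as $f=\sum_\alpha a_\alpha\Phi_\alpha$ with $s(\alpha)=d$. Then $\widetilde f(\mathbf r,x)=\prod_{j=1}^d r_jh(x_j)$, because each factor $h(x_j)$ carries the single radial weight $r_j$. Hence
$\|f\|_{p,\Phi}=\sup_{\|\mathbf r\|_p\le1}\prod_j|r_j|=d^{-d/p}$ by AM--GM. On the coefficient side, $\rho=\rho_{d,p}\le2$ because $p\ge2d$, so $\|a\|_\rho\ge\|a\|_2=\|f\|_2=1$. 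The quotient is therefore at least $d^{d/p}$.

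The only delicate point is the construction of a mean-zero unimodular local function, and the bijection onto the $q$th roots of unity settles it. Everything else is the monotonicity $\|\cdot\|_t\ge\|\cdot\|_2$ for $t\le2$, together with the AM--GM maximization of $\prod|r_j|$ on the $\ell_p$ sphere.
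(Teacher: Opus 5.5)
Your final argument is correct and is essentially the paper's proof: you label $\Omega$ by the $q$th roots of unity to get a mean-zero unimodular $h$, take its tensor power $f=h^{\otimes d}$ on $\Omega^d$ with $\|f\|_\infty=1$, bound the coefficient norm below by $\|f\|_2=1$ for exponents $t\le2$ (the paper writes this as $\|c\|_t^d\ge1$ with $h=\sum_u c_u\varphi_u$), and compute $\|f\|_{p,\Phi}=d^{-d/p}$ by AM--GM. In a write-up, delete the abandoned attempts in your first paragraph (the monomial $\Phi_{(1,\ldots,1)}$ and the point-concentrated $g$), since they play no role in the proof.
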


\begin{proof}
Label the points of $\Omega$ by $0,\ldots,q-1$ and put $h(j)=e^{2\pi ij/q}$.
This function is mean zero, unimodular, and has $L_2$ norm one. Write $%
h=\sum_{u=1}^{q-1}c_u\varphi_u$, so $\sum_u|c_u|^2=1$. The function $%
f(x)=\prod_{j=1}^dh(x_j)$ \lastrev{on $\Omega^d$} \rev{has a homogeneous expansion of order $d$},
has supremum norm one,
and its coefficient norm at any exponent $1\le t\le2$ is $\|c\|_t^d\ge1$.
For finite $p$, the arithmetic--geometric mean inequality applied to
$|r_j|^p$ gives
\lastrev{\[
\prod_{j=1}^d|r_j|
\le\left(\frac1d\sum_{j=1}^d|r_j|^p\right)^{d/p}
\le d^{-d/p},
\]
with equality when $r_j=d^{-1/p}$ for all $j$. Thus
$\|f\|_{p,\Phi}=d^{-d/p}$.} At $p=\infty$ this norm is
one, giving the same formula with $1/\infty=0$. Use the coefficient
exponents $p_d$ and $\rho_{d,p}$, respectively.
\end{proof}

\begin{proof}[Proof of Corollary~\protect\ref{cor:HL-regimes}]
Combining Theorems~A and B gives
\begin{equation}
\label{eq:HL-master}
\log\mathsf{H}_{d,p}(\Phi)
\le
\beta_q\log d
+
K(q)\log(1+\log d)
+
\rev{4\gamma_q}\frac{d^2}{p}
+
O_q\!\left(1+\frac{d\log d}{p}\right).
\end{equation}
Indeed,
\[
\log\Lambda_{d,q}
=
\rev{2\gamma_q}d
+
\frac32\log d
+
O_q(1),
\]
\lastrev{and Lemma~\ref{lem:local-lower} gives $\mathsf B_d^{=}\ge1$, so that
$(1-\tau)\log\mathsf B_d^{=}\le\log\mathsf B_d^{=}$.}

Assume first that
\[
\frac{p(d)}d\longrightarrow\infty.
\]
Then
\[
\frac{d^2}{p(d)}=o(d)
\qquad\text{and}\qquad
\frac{d\log d}{p(d)}=o(d),
\]
while $\log d=o(d)$. Hence \eqref{eq:HL-master} gives
\[
\log\mathsf{H}_{d,p(d)}(\Phi)\le o(d).
\]
On the other hand, Lemma~\ref{lem:local-lower} gives
\[
\mathsf{H}_{d,p(d)}(\Phi)\ge1,
\]
and therefore
\[
0
\le
\log\mathsf{H}_{d,p(d)}(\Phi)
=
o(d).
\]
Equivalently,
\[
\mathsf{H}_{d,p(d)}(\Phi)=\exp(o(d)).
\]

Next suppose that, for some fixed $c>0$,
\[
p(d)\ge c\,\frac{d^2}{\log d}.
\]
Then
\[
\frac{d^2}{p(d)}
\le
\frac1c\log d,
\qquad
\frac{d\log d}{p(d)}
=
O_c\!\left(\frac{(\log d)^2}{d}\right)
=
o(1).
\]
Substitution in \eqref{eq:HL-master} yields
\[
\mathsf{H}_{d,p(d)}(\Phi)
\le
C(q,c)\,
d^{\,\beta_q+\rev{4\gamma_q/c}}
(1+\log d)^{K(q)},
\]
after increasing $C(q,c)$ if necessary.

Finally, assume that
\[
\frac{p(d)}{d^2}\longrightarrow\infty.
\]
Theorem~B and the lower bound
$\mathsf{B}_d^{=}(\Omega,\Phi)\ge1$ imply
\[
\frac{\mathsf{H}_{d,p(d)}(\Phi)}
{\mathsf{B}_d^{=}(\Omega,\Phi)}
\le
\Lambda_{d,q}^{\,2d/p(d)}.
\]
Since
\[
\frac{2d}{p(d)}\log\Lambda_{d,q}
=
O_q\!\left(\frac{d^2}{p(d)}
+
\frac{d\log d}{p(d)}\right)
=o(1),
\]
the right-hand side is $1+o(1)$. Consequently
\[
\mathsf{H}_{d,p(d)}(\Phi)
\le
(1+o(1))
\mathsf{B}_d^{=}(\Omega,\Phi),
\]
uniformly over the local orthonormal basis $\Phi$.
\end{proof}

\begin{corollary}
\label{cor:further-regimes} Uniformly over $\Phi$, if $p(d)/(d^2/\log
d)\to\infty$, then $\mathsf{H}_{d,p(d)}(\Phi)\le d^{\beta_q+o(1)}$. If $%
p(d)\ge cd^2$, then 
\begin{equation*}
\mathsf{H}_{d,p(d)}(\Phi)\le C(q,c)d^{\beta_q}(1+\log d)^{K(q)}. 
\end{equation*}
If $p(d)/d^2\to c\in(0,\infty]$, then 
\begin{equation*}
\limsup_{d\to\infty}\frac{\mathsf{H}_{d,p(d)}(\Phi)}{\mathsf{B}%
_d^{=}(\Omega,\Phi)}\le \rev{e^{4\gamma_q/c}}, 
\end{equation*}
with $1/\infty=0$.
\end{corollary}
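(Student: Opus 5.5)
The plan is to reuse the master estimate \eqref{eq:HL-master} obtained in the proof of Corollary~\ref{cor:HL-regimes}, namely
\[
\log\mathsf{H}_{d,p}(\Phi)\le\beta_q\log d+K(q)\log(1+\log d)+4\gamma_q\frac{d^2}{p}+O_q\!\left(1+\frac{d\log d}{p}\right),
\]
together with the sharper form given by Theorem~B:
\[
\mathsf{H}_{d,p}(\Phi)\le \mathsf B_d^{=}\,\Lambda_{d,q}^{2d/p},
\]
which uses $\mathsf B_d^{=}\ge1$ from Lemma~\ref{lem:local-lower}. It also uses
\[
\log\Lambda_{d,q}=2\gamma_q d+\tfrac32\log d+O_q(1).
\]
All bounds are uniform over $\Phi$, because Theorem~A and $\Lambda_{d,q}$ are.

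\emph{First regime.} Assume $p(d)\log d/d^2\to\infty$. Then $d^2/p=o(\log d)$ and $d\log d/p=o(1)$. Theorem~A gives $\log\mathsf B_d^{=}\le\beta_q\log d+O_q(\log\log d)$. Hence
\[
\log\mathsf{H}_{d,p}\le\beta_q\log d+o(\log d),
\]
that is, $\mathsf{H}_{d,p(d)}(\Phi)\le d^{\beta_q+o(1)}$.

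\emph{Second regime.} Assume $p\ge cd^2$. Then
\[
\frac{2d}{p}\log\Lambda_{d,q}\le\frac{2}{cd}\bigl(2\gamma_q d+\tfrac32\log d+O_q(1)\bigr)=O_{q,c}(1).
\]
So $\Lambda_{d,q}^{2d/p}\le C(q,c)$. Multiplying by the Theorem~A bound for $\mathsf B_d^{=}$ yields $C(q,c)d^{\beta_q}(1+\log d)^{K(q)}$.

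\emph{Third regime.} Assume $p/d^2\to c\in(0,\infty]$. Then
\[
\frac{\mathsf H_{d,p}}{\mathsf B_d^{=}}\le\exp\!\left(\frac{2d}{p}\log\Lambda_{d,q}\right),
\qquad
\frac{2d}{p}\log\Lambda_{d,q}=\frac{4\gamma_q d^2}{p}+O_q\!\left(\frac{d\log d}{p}\right).
\]
The first term tends to $4\gamma_q/c$, which is $0$ when $c=\infty$. The error term tends to $0$, since $d\log d/p=O(\log d/d)$. Taking $\limsup$ gives the bound $e^{4\gamma_q/c}$.

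The only step needing care is the bookkeeping in the third regime. There one must use the exact multiplicative form from Theorem~B, not the lossy additive $O_q(1)$ of \eqref{eq:HL-master}, because the claimed constant $e^{4\gamma_q/c}$ is sharp at the level of the exponent. This requires the precise leading term $2\gamma_q d$ of $\log\Lambda_{d,q}$, read off from \eqref{eq:Lambda-intro}. Everything else is routine asymptotics.
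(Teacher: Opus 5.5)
Your proposal is correct and follows the paper's proof. The first two assertions come from inserting the hypotheses into \eqref{eq:HL-master}; your multiplicative form $\mathsf{H}_{d,p}(\Phi)\le\mathsf{B}_d^{=}(\Omega,\Phi)\Lambda_{d,q}^{2d/p}$ in the second regime is just its exponentiated version. The third comes, as in the paper, from taking logarithms of this ratio bound and using $\log\Lambda_{d,q}=2\gamma_q d+\tfrac32\log d+O_q(1)$, with the tacit point that $d\ge2$ and $p(d)\ge2d$ hold for large $d$ in each regime, so Theorem~B applies.
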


\begin{proof}
The first two assertions follow by inserting the respective assumptions into %
\eqref{eq:HL-master}. For the last, take logarithms of the ratio estimate in
the preceding proof and use the expansion of $\log\Lambda_{d,q}$.
\end{proof}

\section{Finite local systems: geometric preliminaries}
\label{sec:local-systems}

Let $\Omega$ be a nonempty finite set.

\begin{definition}
\label{def:local-system} A \emph{finite local system} on $\Omega$ is a
linearly independent family 
\begin{equation*}
\Phi=(\phi_0,\phi_1,\ldots,\phi_\nu),  \qquad  \phi_u:\Omega\longrightarrow%
\mathbb{C},  \qquad  \phi_0\equiv1, 
\end{equation*}
where $\nu\ge1$.
\end{definition}

Fix $N\in\mathbb{N}$. For 
\begin{equation*}
\alpha=(\alpha_1,\ldots,\alpha_N)\in\{0,1,\ldots,\nu\}^N 
\end{equation*}
we use the previously fixed notation $\func{supp}(\alpha)$ and $s(\alpha)$,
and define the corresponding product function 
\begin{equation}  \label{eq:product-function}
\Phi_\alpha:\Omega^N\longrightarrow\mathbb{C}, \qquad \Phi_\alpha(x)
:=\prod_{j\in\func{supp}(\alpha)}\phi_{\alpha_j}(x_j), \qquad
x=(x_1,\ldots,x_N).
\end{equation}
The empty product is understood to be $1$.

\begin{lemma}
\label{lem:product-independence} The family 
\begin{equation*}
\bigl(\Phi_\alpha\bigr)_{\alpha\in\{0,\ldots,\nu\}^N} 
\end{equation*}
is linearly independent in the space of complex-valued functions on $\Omega^N
$.
\end{lemma}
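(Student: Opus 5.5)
The plan is to reduce the statement to the one-coordinate case and then use the tensor-product structure of functions on $\Omega^N$. The space of complex-valued functions on $\Omega^N$ is canonically the $N$-fold tensor product $\mathbb C^{\Omega}\otimes\cdots\otimes\mathbb C^{\Omega}$, with $g_1\otimes\cdots\otimes g_N$ corresponding to $x\mapsto\prod_j g_j(x_j)$. Since $\phi_0\equiv1$, the function $\Phi_\alpha$ in \eqref{eq:product-function} is exactly $\phi_{\alpha_1}\otimes\cdots\otimes\phi_{\alpha_N}$, the empty product convention matching the factors $\phi_0=1$ outside $\operatorname{supp}(\alpha)$. Elementary tensors built from linearly independent families in each factor are linearly independent, and this gives the claim.

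To keep the argument self-contained, I would argue by induction on $N$ instead of quoting tensor-product facts. The case $N=1$ is the hypothesis of Definition~\ref{def:local-system}. For the inductive step, suppose
\[
\sum_{\alpha}c_\alpha\Phi_\alpha\equiv0\quad\text{on }\Omega^N .
\]
Write $\alpha=(\alpha',\alpha_N)$ with $\alpha'\in\{0,\ldots,\nu\}^{N-1}$, and group terms as
\[
\sum_{u=0}^{\nu}\Bigl(\sum_{\alpha'}c_{(\alpha',u)}\Phi_{\alpha'}(x')\Bigr)\phi_u(x_N)=0 .
\]
Fix $x'\in\Omega^{N-1}$. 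Linear independence of $\phi_0,\ldots,\phi_\nu$ as functions of $x_N$ forces each inner sum to vanish at $x'$. Since $x'$ is arbitrary, each inner sum vanishes identically on $\Omega^{N-1}$. The induction hypothesis then gives $c_{(\alpha',u)}=0$ for all $\alpha'$ and $u$.

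There is no real obstacle here. The only point needing care is that the product in \eqref{eq:product-function} runs over the support only, so one must note that it agrees with the full product $\prod_{j=1}^N\phi_{\alpha_j}(x_j)$ because $\phi_0\equiv1$. Only this identification makes the factorization in the inductive step valid.
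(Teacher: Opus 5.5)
Your proof is correct and is essentially the paper's argument: the paper groups by the first coordinate, uses linear independence of $\phi_0,\ldots,\phi_\nu$ to force each coefficient function $F_u$ to vanish identically, and repeats this for $x_2,\ldots,x_N$, which is your induction written iteratively. Your explicit pointwise fixing of $x'$, and your remark that $\phi_0\equiv1$ makes the support-only product agree with the full product, make the paper's brief steps precise.
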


\begin{proof}
Suppose 
\begin{equation*}
\sum_{\alpha\in\{0,\ldots,\nu\}^N}c_\alpha\Phi_\alpha(x)=0  \qquad
(x\in\Omega^N). 
\end{equation*}
Group the sum according to the first coordinate of $\alpha$: 
\begin{equation*}
\sum_{u=0}^\nu  \phi_u(x_1)F_u(x_2,\ldots,x_N)=0, 
\end{equation*}
where each $F_u$ is a linear combination of product functions in the
remaining $N-1$ variables. Since $\phi_0,\ldots,\phi_\nu$ are linearly
independent on $\Omega$, every $F_u$ vanishes identically. Applying this
successively to $x_2,\ldots,x_N$ gives $c_\alpha=0$ for every $\alpha$.
\end{proof}

Define 
\begin{equation*}
\mathcal{W}(\Phi,N)  :=\func{span} 
\{\Phi_\alpha:\alpha\in\{0,\ldots,\nu\}^N\}. 
\end{equation*}
By Lemma~\ref{lem:product-independence}, every $f\in\mathcal{W}(\Phi,N)$ has
a unique expansion 
\begin{equation}  \label{eq:expansion}
f=\sum_{\alpha\in\{0,\ldots,\nu\}^N} \widehat f_\Phi(\alpha)\Phi_\alpha.
\end{equation}
\introrev{The coefficients $\widehat f_\Phi(\alpha)$ are defined by
\eqref{eq:expansion}; no orthogonality of the local family is assumed.}

For integers $1\le d\le N$, define \rev{the space of homogeneous
expansions of order $d$ by} 
\begin{equation}  \label{eq:Vd}
\mathcal{V}_d(\Phi,N) :=\left\{ f\in\mathcal{W}(\Phi,N): \widehat
f_\Phi(\alpha)=0 \text{ whenever }s(\alpha)\ne d \right\}.
\end{equation}

\begin{definition}
\label{def:lpgeometry} Let $1\le p\le\infty$ and $f\in\mathcal{V}_d(\Phi,N)$%
. Define 
\begin{equation}  \label{eq:extension}
\widetilde f(\mathbf r,x) := \sum_{s(\alpha)=d} \widehat f_\Phi(\alpha) \prod_{j\in%
\func{supp}(\alpha)}r_j\phi_{\alpha_j}(x_j), \qquad \mathbf r\in\mathbb{C}^N, \quad
x\in\Omega^N,
\end{equation}
and 
\begin{equation}  \label{eq:norm}
\left\| f\right\|_{p,\Phi} := \sup_{\left\| \mathbf r\right\|_{\ell_p^N}\le1}
\sup_{x\in\Omega^N} \left|\widetilde f(\mathbf r,x)\right|.
\end{equation}
\end{definition}

\introrev{The quantity in \eqref{eq:norm} is a norm on $\mathcal V_d(\Phi,N)$.
To verify positive definiteness for $1\le p<\infty$, take} $%
\mathbf r=N^{-1/p}(1,\ldots,1)$. Then $\left\| \mathbf r\right\|_p=1$ and, because every
term in \eqref{eq:extension} contains exactly $d$ radial factors, 
\begin{equation*}
\widetilde f(\mathbf r,x)=N^{-d/p}f(x). 
\end{equation*}
Thus $f\ne0$ implies $\left\| f\right\|_{p,\Phi}>0$. For $p=\infty$, take $%
\mathbf r=(1,\ldots,1)$.

\introrev{For each $x\in\Omega^N$, the polynomial
$\widetilde f(\cdot,x)$ is $d$-homogeneous, since every term contains
one radial factor for each active coordinate.}

Associate with $f\in\mathcal{V}_d(\Phi,N)$ the block-square-free polynomial 
\begin{equation}  \label{eq:Pf}
P_f(z) := \sum_{s(\alpha)=d} \widehat f_\Phi(\alpha) \prod_{j\in\func{supp}%
(\alpha)}z_{j,\alpha_j}, \qquad z=(z_{j,u})_{\substack{ 1\le j\le N \\ 1\le
u\le\nu}} \in\mathbb{C}^{N\nu}.
\end{equation}
It is $d$-homogeneous and contains at most one coordinate from each block $%
\{z_{j,1},\ldots,z_{j,\nu}\}$.

For $x\in\Omega$, define the local value vector 
\begin{equation*}
v_x:=(\phi_1(x),\ldots,\phi_\nu(x))\in\mathbb{C}^\nu. 
\end{equation*}
For a subset $A$ of a complex vector space, write 
\begin{equation*}
\func{absconv}(A)  :=\left\{  \sum_{k=1}^m\lambda_k a_k:  m\in\mathbb{N},\
\lambda_k\in\mathbb C,\ a_k\in A\ (1\le k\le m),\
\sum_{k=1}^m|\lambda_k|\le1  \right\}. 
\end{equation*}
Set 
\begin{equation}  \label{eq:Ephi}
B_{E_\Phi} :=\func{absconv}\{v_x:x\in\Omega\}.
\end{equation}
The vectors $(v_x)_{x\in\Omega}$ span $\mathbb{C}^\nu$. Indeed, if $%
c=(c_1,\ldots,c_\nu)$ satisfies 
\begin{equation*}
\sum_{u=1}^\nu c_u\phi_u(x)=0  \qquad(x\in\Omega), 
\end{equation*}
then the linear independence of $\phi_0,\phi_1,\ldots,\phi_\nu$ implies $%
c_1=\cdots=c_\nu=0$. Since $\Omega$ is finite, the absolute convex hull in
\eqref{eq:Ephi} is compact. It is also balanced, convex and
absorbing, with the origin in its interior, and hence is the
closed unit ball of a norm on $\mathbb{C}^\nu$. We denote this norm by $\left\|
\cdot\right\|_{E_\Phi}$.

For $1\le p\le\infty$ set 
\begin{equation*}
X_{p,\Phi}^N:=\ell_p^N(E_\Phi). 
\end{equation*}
For $p<\infty$, 
\begin{equation*}
\left\| z\right\|_{X_{p,\Phi}^N}  :=\left(\sum_{j=1}^N\left\|
z_j\right\|_{E_\Phi}^p\right)^{1/p}, 
\end{equation*}
and for $p=\infty$, 
\begin{equation*}
\left\| z\right\|_{X_{\infty,\Phi}^N}  :=\max_{1\le j\le N}\left\|
z_j\right\|_{E_\Phi}. 
\end{equation*}
A useful decomposition of its unit ball is 
\begin{equation}  \label{eq:block-ball}
B_{X_{p,\Phi}^N} = \bigcup_{\substack{ a_j\ge0 \\ \left\|
a\right\|_{\ell_p^N}\le1}} \prod_{j=1}^N a_jB_{E_\Phi}.
\end{equation}
Indeed, for $z=(z_1,\ldots,z_N)$ one may take $a_j=\left\|
z_j\right\|_{E_\Phi}$.

\begin{proposition}
\label{prop:geometry} For every $f\in\mathcal{V}_d(\Phi,N)$ and every $1\le
p\le\infty$, 
\begin{equation}  \label{eq:geometry}
\sup_{\left\| z\right\|_{X_{p,\Phi}^N}\le1}|P_f(z)| =\left\|
f\right\|_{p,\Phi}.
\end{equation}
\end{proposition}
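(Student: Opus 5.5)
We prove the two inequalities separately, using the block structure of $P_f$ and the decomposition \eqref{eq:block-ball} of the unit ball of $X_{p,\Phi}^N$.

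\emph{Lower bound ($\ge$).} Fix $\mathbf r$ with $\|\mathbf r\|_{\ell_p^N}\le1$ and $x\in\Omega^N$. Put $z_j:=r_jv_{x_j}\in\mathbb C^\nu$. Since $v_{x_j}\in B_{E_\Phi}$, we have $\|z_j\|_{E_\Phi}\le|r_j|$, and hence $\|z\|_{X_{p,\Phi}^N}\le\|\mathbf r\|_p\le1$. Comparing \eqref{eq:extension} with \eqref{eq:Pf} term by term gives $P_f(z)=\widetilde f(\mathbf r,x)$: each term $\prod_{j\in\operatorname{supp}\alpha}z_{j,\alpha_j}$ equals $\prod_j r_j\phi_{\alpha_j}(x_j)$. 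Taking suprema gives $\sup|P_f|\ge\|f\|_{p,\Phi}$.

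\emph{Upper bound ($\le$).} Let $\|z\|_{X_{p,\Phi}^N}\le1$ and set $a_j:=\|z_j\|_{E_\Phi}$, so that $\|a\|_{\ell_p}\le1$.
\begin{itemize}
\item[(i)] If $a_j>0$, write $z_j=a_jw_j$ with $w_j\in B_{E_\Phi}$. By the definition of the absolute convex hull (a finite, hence compact, generating set), $w_j=\sum_{x\in\Omega}\lambda_{j,x}v_x$ with $\sum_x|\lambda_{j,x}|\le1$.
\item[(ii)] If $a_j=0$, then $z_j=0$; take all $\lambda_{j,x}=0$ and $a_j=0$.
\end{itemize}
The key point is that $P_f$ is affine (in fact linear) in each block $z_j$ separately, because every monomial contains at most one coordinate from each block. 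Expanding block by block therefore gives
\[
P_f(z)=\sum_{x\in\Omega^N}\Bigl(\prod_{j}\lambda_{j,x_j}\Bigr)\,\widetilde F(x),
\]
where $\widetilde F(x)$ is the expression obtained by substituting $a_jv_{x_j}$ for $z_j$. This requires care, since the product of the coefficients $\lambda_{j,x_j}$ should only run over the active coordinates of each monomial. To handle this cleanly, introduce a phantom weight for each block: choose $\lambda_{j,\ast}$ with $\sum_x|\lambda_{j,x}|+|\lambda_{j,\ast}|=1$, where the phantom option contributes the vector $0$. Homogeneity of the multilinear expansion then makes each term $P_f(a_1u_1,\ldots,a_Nu_N)$ with $u_j\in\{v_x\}\cup\{0\}$.

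It is simpler to avoid the phantom option as follows. Fix all blocks except the $j$th. The map $w_j\mapsto P_f(\ldots,a_jw_j,\ldots)$ is affine, of the form $c+\ell(w_j)$, where $c$ collects the terms not involving block $j$. The modulus of an affine function is convex, but over the absolute convex hull the maximum need not be attained at the points $v_x$ themselves, because $c\neq0$ in general. We therefore use a unimodular rotation of the radius instead: for $\lambda\in\mathbb C$ with $|\lambda|\le1$, the point $\lambda v_x$ is realized as $a_j\lambda$ times $v_x$. This changes $r_j=a_j\lambda$ while keeping $|r_j|\le a_j$. We thus allow complex radii with $|r_j|\le a_j$; since $\|(r_j)\|_p\le1$, such points are still admissible in $\|f\|_{p,\Phi}$.

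The argument is then as follows. The set $\{a_j\lambda v_x:|\lambda|\le1,\ x\in\Omega\}$ has $a_jB_{E_\Phi}$ as its convex hull. The modulus of an affine function is convex, so its maximum over a convex hull is attained on the generating set. Applying this one block at a time (a finite induction over $j$), we replace each $z_j$ by $r_jv_{x_j}$ with $|r_j|\le a_j$ without decreasing $|P_f|$. The resulting value is $|\widetilde f(\mathbf r,x)|\le\|f\|_{p,\Phi}$.

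The main obstacle is this per-block extreme-point reduction. Because $P_f$ has nonzero terms not involving block $j$, one genuinely needs the scalar multiples $\lambda v_x$ with $|\lambda|\le1$, and not only the points $\pm v_x$, as extreme candidates. This is exactly why complex radial vectors are used in the definition of $\|f\|_{p,\Phi}$. The case $p=\infty$ is the same argument with $a_j\le1$.
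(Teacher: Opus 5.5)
Your proposal is correct and follows the paper's proof. One inequality comes from substituting $z_j=r_jv_{x_j}$. The other uses that $P_f$ is affine in each block and that $a_jB_{E_\Phi}=\operatorname{conv}\{\lambda v_x:|\lambda|\le a_j,\ x\in\Omega\}$, so the convex function $|P_f|$ can be maximized one block at a time on these generators, exactly as in the paper.

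The discarded phantom-weight expansion plays no role in your final argument and can be deleted. Its displayed formula is not right as written: monomials not involving block $j$ would be wrongly multiplied by $\sum_x\lambda_{j,x}$.
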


\begin{proof}
Fix nonnegative $a_1,\ldots,a_N$ with $\left\| a\right\|_p\le1$. By %
\eqref{eq:block-ball}, it is enough to maximize $|P_f|$ on $%
\prod_{j=1}^Na_jB_{E_\Phi}$.

Fix all blocks except the $j$th. Since $P_f$ is block-square-free, there are
a scalar $A$ and a linear functional $L$ on $E_\Phi$ such that 
\begin{equation*}
P_f(z)=A+L(z_j). 
\end{equation*}
Moreover, 
\begin{equation*}
a_jB_{E_\Phi}  =\func{conv}  \{\lambda v_x:|\lambda|\le a_j,\ x\in\Omega\}. 
\end{equation*}
If 
\begin{equation*}
z_j=\sum_{k=1}^m\theta_k\lambda_k v_{x_k},  \qquad
\theta_k\ge0,\quad |\lambda_k|\le a_j,\quad x_k\in\Omega
\quad(1\le k\le m),\qquad \sum_{k=1}^m\theta_k=1, 
\end{equation*}
then 
\begin{equation*}
A+L(z_j)  =\sum_{k=1}^m  \theta_k\bigl(A+L(\lambda_k v_{x_k})\bigr), 
\end{equation*}
and hence 
\begin{equation*}
|A+L(z_j)|  \le  \sup_{\substack{ |\lambda|\le a_j \\ x\in\Omega}} 
|A+L(\lambda v_x)|. 
\end{equation*}
The reverse inequality is immediate because each $\lambda v_x$ belongs to $%
a_jB_{E_\Phi}$. Therefore 
\begin{equation*}
\sup_{z_j\in a_jB_{E_\Phi}}|A+L(z_j)|  =  \sup_{\substack{ |\lambda|\le a_j
\\ x\in\Omega}}  |A+L(\lambda v_x)|. 
\end{equation*}
Applying this reduction successively to $j=1,\ldots,N$ gives 
\begin{align*}
\sup_{z\in\prod_{j=1}^Na_jB_{E_\Phi}}|P_f(z)| &\ = \sup_{\substack{\mathbf r\in\mathbb C^N,\ x\in\Omega^N\\
|r_j|\le a_j\ (1\le j\le N)}} \left| \sum_{s(\alpha)=d} \widehat f_\Phi(\alpha) \prod_{j\in%
\func{supp}(\alpha)}r_j\phi_{\alpha_j}(x_j) \right|.
\end{align*}
Finally, taking the supremum over all nonnegative $a\in B_{\ell_p^N}$ is equivalent to
requiring $\left\| \mathbf r\right\|_p\le1$. This is exactly \eqref{eq:norm}.
\end{proof}

\medskip The local geometry in Theorem~C is measured by the following constant.

\begin{definition}
\label{def:kappa} For $1\le p\le\infty$, define 
\begin{equation}  \label{eq:kappa}
\kappa_{p,\Phi} :=\left\|\func{id}:\ell_p^\nu\longrightarrow E_\Phi\right\|
=\sup_{y\in\mathbb{C}^\nu\setminus\{0\}} \frac{\|y\|_{E_\Phi}}{%
\|y\|_{\ell_p^\nu}}.
\end{equation}
\end{definition}

\begin{lemma}
\label{lem:comparison} For $1\le p\le\infty$ and every $f\in\mathcal{V}_d(\Phi,N)$, 
\begin{equation}  \label{eq:comparison}
\sup_{\|z\|_{\ell_p^{N\nu}}\le1}|P_f(z)| \le
\kappa_{p,\Phi}^{\,d}\|f\|_{p,\Phi}.
\end{equation}
\end{lemma}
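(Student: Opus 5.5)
The plan is to reduce the inequality to Proposition~\ref{prop:geometry} through a scaling argument on the unit balls. Proposition~\ref{prop:geometry} identifies $\|f\|_{p,\Phi}$ with the supremum of $|P_f|$ over the unit ball of $X_{p,\Phi}^N=\ell_p^N(E_\Phi)$. So it is enough to show that the unit ball of $\ell_p^{N\nu}$ is contained in $\kappa_{p,\Phi}B_{X_{p,\Phi}^N}$, and then use $d$-homogeneity of $P_f$.

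For the containment, take $z=(z_1,\ldots,z_N)$ with $z_j\in\mathbb C^\nu$ and $\|z\|_{\ell_p^{N\nu}}\le1$. The definition of $\kappa_{p,\Phi}$ (Definition~\ref{def:kappa}) gives $\|z_j\|_{E_\Phi}\le\kappa_{p,\Phi}\|z_j\|_{\ell_p^\nu}$ for each block $j$. For $p<\infty$ we have
\[
\|z\|_{X_{p,\Phi}^N}^p=\sum_{j=1}^N\|z_j\|_{E_\Phi}^p\le\kappa_{p,\Phi}^p\sum_{j=1}^N\|z_j\|_{\ell_p^\nu}^p=\kappa_{p,\Phi}^p\|z\|_{\ell_p^{N\nu}}^p,
\]
because the $\ell_p^{N\nu}$ norm is the $\ell_p$ sum of the block $\ell_p^\nu$ norms. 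The case $p=\infty$ is the same argument with maxima in place of sums. Note that $\kappa_{p,\Phi}$ is finite and positive, since $E_\Phi$ is a norm on a finite-dimensional space.

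Now let $w=z/\kappa_{p,\Phi}$, which lies in the unit ball of $X_{p,\Phi}^N$. Since $P_f$ is $d$-homogeneous,
\[
|P_f(z)|=\kappa_{p,\Phi}^d|P_f(w)|\le\kappa_{p,\Phi}^d\sup_{\|w\|_{X_{p,\Phi}^N}\le1}|P_f(w)|=\kappa_{p,\Phi}^d\|f\|_{p,\Phi},
\]
where the last equality is Proposition~\ref{prop:geometry}. Taking the supremum over $z$ gives \eqref{eq:comparison}.

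No step should be a real obstacle, since the geometric work is already done in Proposition~\ref{prop:geometry}. The only point needing care is that the $\ell_p^{N\nu}$ norm splits blockwise to match the $\ell_p^N(E_\Phi)$ structure, including the case $p=\infty$.
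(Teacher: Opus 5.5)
Your proposal is correct and follows the paper's proof: the blockwise bound $\|z_j\|_{E_\Phi}\le\kappa_{p,\Phi}\|z_j\|_{\ell_p^\nu}$ gives $\|z\|_{X_{p,\Phi}^N}\le\kappa_{p,\Phi}\|z\|_{\ell_p^{N\nu}}$. You then rescale by $\kappa_{p,\Phi}^{-1}$, use $d$-homogeneity of $P_f$, and apply Proposition~\ref{prop:geometry}, which is what the paper does.
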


\begin{proof}
Write $z=(z_1,\ldots,z_N)$ with $z_j\in\mathbb{C}^\nu$. By \eqref{eq:kappa}, 
$\|z_j\|_{E_\Phi}\le\kappa_{p,\Phi}\|z_j\|_{\ell_p^\nu}$, hence 
\begin{equation*}
\|z\|_{X_{p,\Phi}^N}\le\kappa_{p,\Phi}\|z\|_{\ell_p^{N\nu}}. 
\end{equation*}
If $\|z\|_{\ell_p^{N\nu}}\le1$, then $\kappa_{p,\Phi}^{-1}z$ belongs to the
unit ball of $X_{p,\Phi}^N$. Since $P_f$ is $d$-homogeneous, Proposition~\ref%
{prop:geometry} gives 
\begin{equation*}
|P_f(z)|\le\kappa_{p,\Phi}^{d}\|f\|_{p,\Phi}. 
\end{equation*}
Taking the supremum proves the claim.
\end{proof}

\section{Proof of Theorem C: coefficient inequality and optimality}
\label{sec:proof-C}

\subsection{The coefficient inequality}

\label{sec:HL-estimate} For $d\ge1$ and $d<p\le\infty$, let $\mathsf{H}^{\mathrm{pol}%
}_{d,p}$ denote the optimal dimension-free constant in the classical complex
polynomial Hardy--Littlewood inequality 
\begin{equation*}
\left(\sum_{|\beta|=d}|c_\beta|^{\rho_{d,p}}\right)^{1/\rho_{d,p}}  \le 
\mathsf{H}^{\mathrm{pol}}_{d,p}  \sup_{\|z\|_{\ell_p^M}\le1}|P(z)| 
\end{equation*}
for every $M\in\mathbb N$ and every $d$-homogeneous polynomial
$P(z)=\sum_{|\beta|=d}c_\beta z^\beta$ on $\mathbb C^M$.
Here $\beta\in\mathbb N_0^M$, $|\beta|=\sum_{j=1}^M\beta_j$ and
$z^\beta=\prod_{j=1}^M z_j^{\beta_j}$; see \cite%
{AlbuquerqueBayartPellegrinoSeoane,DimantSevillaPeris}.

The optimal local constant is 
\begin{equation}  \label{eq:Hphi}
\mathsf{H}_{d,p}(\Phi) :=\sup_{N\ge d}\sup_{0\ne f\in\mathcal{V}_d(\Phi,N)} 
\frac{\left(\sum_{s(\alpha)=d}|\widehat
f_\Phi(\alpha)|^{\rho_{d,p}}\right)^{1/\rho_{d,p}}}{\|f\|_{p,\Phi}}.
\end{equation}

\begin{proof}[Proof of the upper estimate in Theorem C]
Let $f\in\mathcal{V}_d(\Phi,N)$ and consider the block-square-free
polynomial $P_f$ from \eqref{eq:Pf}. For each interaction index $\alpha$
with $s(\alpha)=d$, the corresponding monomial in $P_f$ uses exactly one
variable from each active block. \introrev{Its coefficient is $\widehat f_\Phi(\alpha)$, and distinct indices
give distinct monomials.}

Applying the classical polynomial Hardy--Littlewood inequality to $P_f$ and
then Proposition~\ref{prop:geometry} together with Lemma~\ref{lem:comparison}
gives 
\begin{align*}
\left(\sum_{s(\alpha)=d}|\widehat
f_\Phi(\alpha)|^{\rho_{d,p}}\right)^{1/\rho_{d,p}} &\le \mathsf{H}^{\mathrm{%
pol}}_{d,p} \sup_{\|z\|_{\ell_p^{N\nu}}\le1}|P_f(z)| \\
&\le \mathsf{H}^{\mathrm{pol}}_{d,p}\,\kappa_{p,\Phi}^{d}\|f\|_{p,\Phi}.
\end{align*}
Taking the optimal constant proves the upper estimate in Theorem C. The
sharpness assertion is proved in Section~\ref{sec:HL-optimality}.
\end{proof}

\subsection{Sharpness of the Hardy--Littlewood exponent}

\label{sec:HL-optimality}

Fix any $u_0\in\{1,\ldots,\nu\}$ and put
$M_{u_0}:=\|\phi_{u_0}\|_\infty>0$. Linear independence ensures that
$\phi_{u_0}$ is nonconstant and nonzero. The following identity allows
sharpness to be tested in this single local direction, without assuming
unimodularity.

\begin{lemma}
\label{lem:squarefree} Let $N\ge d$ and $1\le p\le\infty$. For coefficients $(a_A)_{|A|=d}$ define
\begin{equation}\label{eq:squarefree-f}
f(x):=\sum_{\substack{A\subseteq[N]\\|A|=d}}
a_A\prod_{j\in A}\phi_{u_0}(x_j).
\end{equation}
For each $A$, let $\alpha^A_j=u_0$ if $j\in A$ and $\alpha^A_j=0$
otherwise. Then $\widehat f_\Phi(\alpha^A)=a_A$, all other coefficients
vanish, and
\begin{equation}\label{eq:squarefreeidentity}
\|f\|_{p,\Phi}=M_{u_0}^{d}
\sup_{\|z\|_{\ell_p^N}\le1}\left|\sum_{|A|=d}a_Az_A\right|,
\qquad z_A:=\prod_{j\in A}z_j.
\end{equation}
\end{lemma}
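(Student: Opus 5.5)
The coefficient claim will come first, directly from uniqueness of the expansion. By definition, $\Phi_{\alpha^A}(x)=\prod_{j\in A}\phi_{u_0}(x_j)$, so \eqref{eq:squarefree-f} already reads
\[
f=\sum_{|A|=d}a_A\Phi_{\alpha^A}.
\]
Lemma~\ref{lem:product-independence} then gives $\widehat f_\Phi(\alpha^A)=a_A$ and shows that every other coefficient vanishes. Since $s(\alpha^A)=|A|=d$, we also get $f\in\mathcal V_d(\Phi,N)$.

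For the identity \eqref{eq:squarefreeidentity}, I will use the radial extension \eqref{eq:extension}. It reads
\[
\widetilde f(\mathbf r,x)=\sum_{|A|=d}a_A\prod_{j\in A}r_j\phi_{u_0}(x_j)=Q(w),
\qquad Q(z):=\sum_{|A|=d}a_Az_A,\qquad w_j:=r_j\phi_{u_0}(x_j).
\]
Each coordinate satisfies $|w_j|\le M_{u_0}|r_j|$. So if $\|\mathbf r\|_p\le1$, then $w/M_{u_0}$ lies in the unit ball of $\ell_p^N$. Because $Q$ is $d$-homogeneous, this gives $|\widetilde f(\mathbf r,x)|\le M_{u_0}^d\sup_{\|z\|_p\le1}|Q(z)|$, which is the upper bound.

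For the reverse inequality, I will choose a point $x^*\in\Omega$ with $|\phi_{u_0}(x^*)|=M_{u_0}$; this is possible because $\Omega$ is finite. Write $\phi_{u_0}(x^*)=M_{u_0}e^{i\theta}$. Given any $z$ with $\|z\|_p\le1$, I set $x_j=x^*$ for every $j$ and $r_j=z_je^{-i\theta}$, so that $\|\mathbf r\|_p=\|z\|_p$. Then every $w_j=M_{u_0}z_j$, and homogeneity gives $\widetilde f(\mathbf r,x)=M_{u_0}^dQ(z)$. Taking suprema yields equality; the case $p=\infty$ is handled the same way.

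The argument is elementary and has no real obstacle. The only points that need care are the phase compensation (complex $r_j$ absorb the phase of $\phi_{u_0}(x^*)$) and the fact that the monomials are square-free, so each $r_j$ appears with exponent one and the substitution $w_j=r_j\phi_{u_0}(x_j)$ is legitimate.
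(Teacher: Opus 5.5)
Your proposal is correct and follows the paper's proof essentially step for step. You obtain the coefficient claim from Lemma~\ref{lem:product-independence}, the upper bound from the substitution $z_j=r_j\phi_{u_0}(x_j)/M_{u_0}$ together with homogeneity, and the reverse bound by evaluating at a constant point where $|\phi_{u_0}|$ attains $M_{u_0}$, with $r_j=M_{u_0}z_j/\phi_{u_0}(x^0)$ (your $z_je^{-i\theta}$).
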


\begin{proof}
The coefficient assertion follows from product independence.
Write $P(z)=\sum_{|A|=d}a_Az_A$. For an admissible pair $(\mathbf r,x)$, put
$z_j=r_j\phi_{u_0}(x_j)/M_{u_0}$. Then $|z_j|\le|r_j|$, so
$\|z\|_p\le1$, and $\widetilde f(\mathbf r,x)=M_{u_0}^dP(z)$. This proves
one inequality in \eqref{eq:squarefreeidentity}.

Conversely, choose $x^0\in\Omega$ with
$|\phi_{u_0}(x^0)|=M_{u_0}$. Given $\|z\|_p\le1$, set
$r_j=M_{u_0}z_j/\phi_{u_0}(x^0)$ and $x=(x^0,\ldots,x^0)$.
Then $\|\mathbf r\|_p=\|z\|_p\le1$ and
$\widetilde f(\mathbf r,x)=M_{u_0}^dP(z)$. Taking suprema proves the reverse
inequality, including at $p=\infty$.
\end{proof}

\attribrev{The next lemma uses the square-free polynomial form of the
diagonal multilinear test in
\cite[proof of Proposition~4.1]{DimantSevillaPeris}.}

\begin{lemma}
\label{lem:disjoint} Fix $u_0$ and $M_{u_0}$ as above, let $d<p<\infty$, and let $M\in\mathbb{N}$. Put $N:=Md$ and
partition $\{1,\ldots,N\}$ into the disjoint sets 
\begin{equation*}
A_k:=\{(k-1)d+1,\ldots,kd\},  \qquad 1\le k\le M. 
\end{equation*}
For 
\begin{equation*}
f_M(x):=\sum_{k=1}^M\prod_{j\in A_k}\phi_{u_0}(x_j) 
\end{equation*}
one has 
\begin{equation}  \label{eq:disjoint-norm}
\left\| f_M\right\|_{p,\Phi} =M_{u_0}^d d^{-d/p}M^{1-d/p}.
\end{equation}
\end{lemma}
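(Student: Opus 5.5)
By Lemma~\ref{lem:squarefree}, applied with $a_A=1$ for $A\in\{A_1,\ldots,A_M\}$ and $a_A=0$ otherwise, the radial norm becomes a polynomial norm:
\[
\|f_M\|_{p,\Phi}=M_{u_0}^d\sup_{\|z\|_{\ell_p^N}\le1}\Bigl|\sum_{k=1}^M\prod_{j\in A_k}z_j\Bigr|.
\]
It therefore suffices to show that this supremum equals $d^{-d/p}M^{1-d/p}$. The problem splits into an upper bound obtained from two applications of the arithmetic--geometric mean and H\"older inequalities, and an explicit extremal point.

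For the upper bound, fix $z$ with $\|z\|_p\le1$ and put $c_k:=\sum_{j\in A_k}|z_j|^p$, so that $\sum_k c_k\le1$. The arithmetic--geometric mean inequality on each block gives
\[
\prod_{j\in A_k}|z_j|\le\Bigl(\tfrac1d c_k\Bigr)^{d/p},
\]
exactly as in the proof of Lemma~\ref{lem:local-lower}. Summing and using the triangle inequality,
\[
\Bigl|\sum_k\prod_{j\in A_k}z_j\Bigr|\le d^{-d/p}\sum_{k=1}^M c_k^{d/p}.
\]
Since $d/p<1$, the map $c\mapsto c^{d/p}$ is concave. Jensen's inequality (or H\"older with exponents $p/d$ and $p/(p-d)$) then gives
\[
\sum_k c_k^{d/p}\le M^{1-d/p}\Bigl(\sum_k c_k\Bigr)^{d/p}\le M^{1-d/p}.
\]
This proves the inequality ``$\le$'' in \eqref{eq:disjoint-norm}.

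For the reverse inequality, take $z_j=N^{-1/p}=(Md)^{-1/p}$ for every $j$. Then $\|z\|_p=1$, and each block contributes $(Md)^{-d/p}$. The sum is therefore
\[
M(Md)^{-d/p}=d^{-d/p}M^{1-d/p},
\]
so equality is attained. Multiplying by $M_{u_0}^d$ gives \eqref{eq:disjoint-norm}.

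The only delicate points are the two uses of convexity: the arithmetic--geometric mean inequality within each block, and the concavity step across blocks, which needs $d<p$. Both are routine. The substance of the argument is the reduction supplied by Lemma~\ref{lem:squarefree}.
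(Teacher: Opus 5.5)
Your proposal is correct and follows the paper's proof essentially verbatim. It uses the reduction via Lemma~\ref{lem:squarefree}, the blockwise arithmetic--geometric mean bound (your $c_k$ is the paper's $b_k^p$), H\"older across the $M$ blocks using $d<p$, and equality at the constant vector $z_j=(Md)^{-1/p}$.
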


\begin{proof}
By Lemma~\ref{lem:squarefree}, the norm in \eqref{eq:disjoint-norm} equals 
\begin{equation*}
M_{u_0}^d\sup_{\left\| z\right\|_{\ell_p^{Md}}\le1}  \left|\sum_{k=1}^M
z_{A_k}\right|. 
\end{equation*}
For such a vector $z$, set 
\begin{equation*}
b_k:=\left(\sum_{j\in A_k}|z_j|^p\right)^{1/p}. 
\end{equation*}
The arithmetic--geometric mean inequality in each block gives 
\begin{equation*}
|z_{A_k}|\le d^{-d/p}b_k^d. 
\end{equation*}
Moreover $\sum_{k=1}^M b_k^p\le1$. Since $d<p$, H\"older's inequality yields 
\begin{equation*}
\sum_{k=1}^M b_k^d  \le M^{1-d/p}\left(\sum_{k=1}^M b_k^p\right)^{d/p}  \le
M^{1-d/p}. 
\end{equation*}
Hence the left-hand side of \eqref{eq:disjoint-norm} is at most
$M_{u_0}^d d^{-d/p}M^{1-d/p}$. Equality is attained by taking $|z_j|=(Md)^{-1/p}$ for
every $j$ and choosing all phases equal. This proves \eqref{eq:disjoint-norm}%
.
\end{proof}

\begin{lemma}
\label{lem:KSZ} Fix $d\ge1$ and $2\le p\le\infty$, with $1/\infty:=0$. There is a constant $K_d>0$,
independent of $N$ and $p$, such that for every $N\ge d$ there are signs
$\varepsilon_A\in\{-1,1\}$, $|A|=d$, satisfying 
\begin{equation}  \label{eq:KSZ}
\sup_{\left\| z\right\|_{\ell_p^N}\le1} \left| \sum_{|A|=d}\varepsilon_Az_A
\right| \le K_dN^{\frac{d+1}{2}-\frac dp}.
\end{equation}
\end{lemma}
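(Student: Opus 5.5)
The plan is to take random signs and control the expected supremum, which is the classical Kahane--Salem--Zygmund argument for square-free polynomials. After that, $p$ is handled by comparing norms.

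\emph{Step 1: the case $p=\infty$.} I would use the KSZ inequality in its multilinear or polynomial form \cite{DefantMastylo,DimantSevillaPeris}. There are signs with
\[
\sup_{z\in\mathbb D^N}\Bigl|\sum_{|A|=d}\varepsilon_Az_A\Bigr|\le C_d\sqrt{N\log N}\,N^{d/2}.
\]
The sharper version for sup over $\ell_p$ balls removes the logarithm: for $2\le p\le\infty$, Bayart's form of KSZ gives a bound $C_dN^{(d+1)/2-d/p}$ with $C_d$ uniform in $p$. The mechanism is Bernstein's inequality on the polydisc, which reduces the supremum to a net of size $e^{CdN}$. On each net point, Hoeffding applies to the Rademacher sum, whose variance is $\sum_A|z_A|^2$. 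A union bound over the net then gives $\sqrt{N}\,(\sum_A|z_A|^2)^{1/2}$ up to $C_d$.

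\emph{Step 2: the general $p$.} For $\|z\|_{\ell_p^N}\le1$ with $p\ge2$, I would bound $\sum_{|A|=d}|z_A|^2\le (\sum_j|z_j|^2)^d/d!$. Hölder then gives $\|z\|_2\le N^{1/2-1/p}$, so this quantity is at most $N^{d(1/2-1/p)\cdot2}/d!$. Combining this with the $\sqrt N$ entropy factor yields
\[
\sqrt N\,N^{d/2-d/p}=N^{\frac{d+1}2-\frac dp}.
\]
Concretely, I would run the chaining/net argument directly on the ball $B_{\ell_p^N}$. The covering number of $B_{\ell_p^N}$ in the sup-type metric is again at most $e^{C N}$ by volume arguments, because it sits inside $B_{\ell_\infty^N}$. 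The polynomial's Lipschitz constant on the ball controls the net error, giving a factor $(1-\epsilon d)^{-1}$. A single choice of signs works for all $p$ simultaneously only if the $p$-dependence is absorbed. I would therefore cite the uniform-in-$p$ statement of \cite[Lemma]{DimantSevillaPeris} or \cite{DefantMastylo}, or else note that the lemma only needs, for each $p$, some signs. Since the statement quantifies "for every $N$ there are signs", with $K_d$ independent of $p$ and $N$, per-$p$ sign choices with a uniform constant suffice.

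\emph{Main obstacle.} The hard part is removing the $\sqrt{\log N}$ loss and keeping $K_d$ uniform in $p$. Here I would rely on the known sharp KSZ estimate for $\ell_p$ balls with $p\ge2$ in \cite{DefantMastylo}, specialized to square-free (tetrahedral) index sets. That estimate is proven via Gaussian/Rademacher process bounds with entropy integrals of $B_{\ell_p^N}$, which are uniform for $p\ge2$. I would verify only that restricting the coefficient set to square-free monomials does not affect that argument, since it only decreases the variance.
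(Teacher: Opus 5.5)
Your proposal follows the paper's proof: random signs, the Defant--Masty{\l}o form of the Kahane--Salem--Zygmund inequality specialized to square-free indices, which bounds the supremum over $B_{\ell_p^N}$ (in expectation, or in $\psi_2$-norm) by $C\sqrt{N(1+\log d)}\,\sup_{\|z\|_p\le1}\bigl(\sum_{|A|=d}|z_A|^2\bigr)^{1/2}$ with $C$ absolute, followed by $\sum_{|A|=d}|z_A|^2\le\|z\|_2^{2d}/d!$ and $\|z\|_2\le N^{1/2-1/p}\|z\|_p$, and a choice of signs not exceeding the mean. One caveat on your optional self-contained sketch: the net must be taken in the $\ell_p^N$ metric, not the sup metric, in which the relevant Lipschitz constant on $B_{\ell_p^N}$ can be of order $dN^{1/p}\sup_{B_{\ell_p^N}}|P|$ and a $\sqrt{\log N}$ loss reappears. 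In the $\ell_p^N$ metric the volume bound $(1+2/\epsilon)^{2N}$ is uniform in $p$, and one takes $\epsilon=1/(2ed)$ together with Harris's bound $\|DP(\xi)\|\le ed\sup_{B_{\ell_p^N}}|P|$; with that correction, Hoeffding and a union bound already prove the cited estimate, so no chaining is needed.
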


\begin{proof}
\lastrev{Let $S_{N,d}:=\{-1,1\}^{\{A\subseteq[N]:|A|=d\}}$ with its uniform
probability measure $\tau_{N,d}$. The coordinate functions
$\varepsilon\mapsto\varepsilon_A$ are independent Rademacher variables.
Put
\[
Z(\varepsilon):=\sup_{\|z\|_{\ell_p^N}\le1}
\left|\sum_{\substack{A\subseteq[N]\\|A|=d}}\varepsilon_Az_A\right|.
\]
Corollary~6.15 of \cite{DefantMastylo}, specialized to these
square-free multi-indices, gives}
\begin{equation*}
\lastrev{\|Z\|_{L_{\psi_2}(\tau_{N,d})} \le C}\sqrt{%
N(1+\log d)}  \sup_{\left\| z\right\|_p\le1} 
\left(\sum_{|A|=d}|z_A|^2\right)^{1/2}, 
\end{equation*}
where $C>0$ is absolute. Here $\psi_2(t)=e^{t^2}-1$, and
$L_{\psi_2}$ denotes the associated Orlicz space with its Luxemburg norm.
\lastrev{For every $\lambda>0$ with
$\int_{S_{N,d}}e^{Z^2/\lambda^2}\,d\tau_{N,d}\le2$,
Cauchy--Schwarz and Jensen's inequality for the exponential give
\[
\int_{S_{N,d}}Z\,d\tau_{N,d}
\le\left(\int_{S_{N,d}}Z^2\,d\tau_{N,d}\right)^{1/2}
\le\lambda\sqrt{\log2}.
\]
Taking the infimum over such $\lambda$ gives
$\int_{S_{N,d}}Z\,d\tau_{N,d}
\le\sqrt{\log2}\,\|Z\|_{L_{\psi_2}(\tau_{N,d})}$.}

For every $z\in\mathbb{C}^N$, 
\begin{equation*}
d!\sum_{|A|=d}|z_A|^2  \le  \left(\sum_{j=1}^N|z_j|^2\right)^d, 
\end{equation*}
because each square-free product occurs exactly $d!$ times in the expansion
on the right. Hence 
\begin{equation*}
\left(\sum_{|A|=d}|z_A|^2\right)^{1/2}  \le  \frac1{\sqrt{d!}}\left\|
z\right\|_2^d. 
\end{equation*}
For $p\ge2$, 
\begin{equation*}
\left\| z\right\|_2\le N^{1/2-1/p}\left\| z\right\|_p. 
\end{equation*}
Therefore 
\begin{equation*}
\lastrev{\int_{S_{N,d}}Z(\varepsilon)\,d\tau_{N,d}(\varepsilon)
\le C\sqrt{\frac{(1+\log d)\log2}{d!}}\,N^{(d+1)/2-d/p}
=K_dN^{(d+1)/2-d/p},}
\end{equation*}
\lastrev{where $K_d:=C\sqrt{(1+\log d)\log2/d!}$.} At least one
choice of signs satisfies \eqref{eq:KSZ}.
\end{proof}

\begin{theorem}
\label{thm:optimal} Let $d\ge1$, $d<p\le\infty$, and let $\Phi$ be any finite local system. If $t>0$ and there is a constant $C$, independent of $N
$, such that 
\begin{equation}  \label{eq:r-hypothesis}
\left(\sum_{s(\alpha)=d}|\widehat f_\Phi(\alpha)|^t\right)^{1/t} \le
C\left\| f\right\|_{p,\Phi}
\end{equation}
for every $N\ge d$ and every $f\in\mathcal{V}_d(\Phi,N)$, then 
\begin{equation*}
t\ge\rho_{d,p}. 
\end{equation*}
Thus the exponent in Theorem C is optimal throughout the full
Hardy--Littlewood range $d<p\le\infty$ for every finite local system.
\end{theorem}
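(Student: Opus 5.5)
We plan to test the hypothesis \eqref{eq:r-hypothesis} against the two families of square-free examples built in one fixed local direction $\phi_{u_0}$, and compare growth rates in $M$ and in $N$. By Lemma~\ref{lem:squarefree}, the coefficients of such an $f$ are exactly the $a_A$. Its radial norm is $M_{u_0}^d$ times the $\ell_p$-sup norm of the square-free polynomial $\sum a_A z_A$. So the problem reduces to the classical square-free test of Dimant--Sevilla-Peris, with only the harmless constant $M_{u_0}^d$ inserted. We split into the two regimes of \eqref{eq:rho}: $d<p\le 2d$, where $\rho_{d,p}=p/(p-d)$, and $2d\le p\le\infty$, where $\rho_{d,p}=2dp/(dp+p-2d)$.

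\emph{Case $d<p\le2d$ (and in fact any finite $p>d$).} We apply \eqref{eq:r-hypothesis} to $f_M$ from Lemma~\ref{lem:disjoint}, which has $M$ coefficients, each equal to $1$. The left side is $M^{1/t}$. By \eqref{eq:disjoint-norm}, the right side is $C M_{u_0}^d d^{-d/p} M^{1-d/p}$. Letting $M\to\infty$ forces $1/t\le 1-d/p=(p-d)/p$, that is, $t\ge p/(p-d)$. This gives optimality wherever $\rho_{d,p}=p/(p-d)$.

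\emph{Case $2\le p\le\infty$ (needed for $p\ge2d$).} We use the random-sign polynomials of Lemma~\ref{lem:KSZ}. For each $N\ge d$, let $f_N$ be the function \eqref{eq:squarefree-f} with $a_A=\varepsilon_A$. It has $\binom Nd$ unimodular coefficients, so its left side is $\binom Nd^{1/t}\ge c_d N^{d/t}$. Lemma~\ref{lem:squarefree} combined with \eqref{eq:KSZ} bounds its radial norm by $M_{u_0}^dK_dN^{(d+1)/2-d/p}$. Sending $N\to\infty$ yields
\[
\frac dt\le\frac{d+1}2-\frac dp,
\qquad\text{so}\qquad
t\ge\frac{2dp}{dp+p-2d},
\]
which is $\rho_{d,p}$ for $2d\le p<\infty$. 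At $p=\infty$ the same bound reads $t\ge 2d/(d+1)$, again matching $\rho_{d,\infty}$. Lemma~\ref{lem:KSZ} is stated for $2\le p\le\infty$, and this range covers every $p\ge 2d$ since $d\ge1$.

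Combining the two cases, for every $p\in(d,\infty]$ the active lower bound equals $\rho_{d,p}$, so $t\ge\rho_{d,p}$. This holds because $\rho_{d,p}$ is exactly the maximum of the two lower bounds in the respective ranges, and the two formulas agree at $p=2d$. The only delicate point is bookkeeping, not a genuine obstacle. First, we must make sure the examples lie in $\mathcal V_d(\Phi,N)$ with the claimed coefficients, which Lemma~\ref{lem:squarefree} gives via product independence (Lemma~\ref{lem:product-independence}). Second, the constants $M_{u_0}^d$, $d^{-d/p}$, $K_d$ and $c_d$ must be independent of $M$ and $N$, so that only the exponents survive in the limit. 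Since the upper estimate of Theorem~C gives the reverse inequality with exponent $\rho_{d,p}$, the exponent is optimal.
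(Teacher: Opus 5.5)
Your proposal is correct and follows essentially the same route as the paper. The disjoint-block functions $f_M$ of Lemma~\ref{lem:disjoint} give $t\ge p/(p-d)$ for $d<p\le 2d$, and the random-sign square-free functions from Lemmas~\ref{lem:squarefree} and~\ref{lem:KSZ}, together with $\binom Nd\ge (N/2)^d/d!$, give $d/t\le (d+1)/2-d/p$ for $p\ge 2d$, including $p=\infty$.
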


\begin{proof}
Fix $u_0$ and $M_{u_0}>0$ as above.
Suppose first that $d<p\le2d$. Apply \eqref{eq:r-hypothesis} to the function 
$f_M$ from Lemma~\ref{lem:disjoint}. It has exactly $M$ nonzero
coefficients, all equal to one. Hence 
\begin{equation*}
M^{1/t}  \le C M_{u_0}^d d^{-d/p}M^{1-d/p}. 
\end{equation*}
Since $M_{u_0}^d$ is fixed as $M\to\infty$, the comparison gives 
\begin{equation*}
\frac1t\le1-\frac dp, 
\end{equation*}
and therefore 
\begin{equation*}
t\ge\frac{p}{p-d}=\rho_{d,p}. 
\end{equation*}

Now suppose that $p\ge2d$. Take the square-free polynomial in Lemma~\ref%
{lem:squarefree} with the signs from Lemma~\ref{lem:KSZ}. It has exactly $%
\binom{N}{d}$ nonzero coefficients, all of modulus one. Hence %
\eqref{eq:r-hypothesis} and \eqref{eq:squarefreeidentity} imply 
\begin{equation*}
\binom{N}{d}^{1/t}  \le C M_{u_0}^d K_dN^{(d+1)/2-d/p}. 
\end{equation*}
For $N\ge2d$, 
\begin{equation*}
\binom{N}{d}  =\frac{N(N-1)\cdots(N-d+1)}{d!}  \ge\frac{(N/2)^d}{d!}. 
\end{equation*}
Therefore 
\begin{equation*}
N^{d/t-(d+1)/2+d/p}  \le C M_{u_0}^d K_d2^{d/t}(d!)^{1/t}. 
\end{equation*}
Letting $N\to\infty$ forces 
\begin{equation*}
\frac d{t}\le\frac{d+1}{2}-\frac dp, 
\end{equation*}
which, for finite $p$, is equivalent to 
\begin{equation*}
t\ge\frac{2dp}{dp+p-2d}=\rho_{d,p}. 
\end{equation*}
At $p=\infty$, the same comparison gives $d/t\le(d+1)/2$, hence $t\ge
p_d=\rho_{d,\infty}$. At $p=2d$ both arguments give the same lower bound $%
t\ge2$.
\end{proof}

Together with the upper estimate above, this completes the proof of Theorem~C.

\begin{remark}
\label{rem:Tao-system} An example beyond cyclic character systems is
furnished by Tao's order-six complex Hadamard matrix $H$; see \cite[Section~5.6.4]%
{TadejZyczkowski2006}. Its entries have modulus one, its first column is
constant, and $H^*H=6I_6$. Its columns therefore form a local orthonormal
basis on a uniform six-point set. This matrix is not equivalent to the
order-six Fourier matrix. Theorems~A, B and C apply to the resulting system.
\end{remark}

\medskip\noindent\textbf{Funding.} D. N\'u\~nez-Alarc\'on, D. Pellegrino,
and A. Raposo Jr. were supported by CNPq Grant 406457/2023-9
(CNPq/MCTI Call No.~10/2023) and CNPq Grant 403964/2024-5
(MCTI/CNPq Call No.~16/2024). In addition, D. Pellegrino was supported by
CNPq Grant 305807/2025-0 and A. Raposo Jr. by CNPq Grant 302341/2025-0.

\medskip\noindent\textbf{Competing interests.} The authors declare that they
have no competing interests.

\medskip\noindent\textbf{Data availability.} No datasets were generated or
analyzed in this work.

\medskip\noindent\textbf{Declaration of generative AI and AI-assisted technologies.}
During the preparation of this manuscript, the authors used OpenAI's ChatGPT as a generative-AI tool for exploratory calculations, consistency checks, organization of arguments, and drafting and editorial assistance. All AI-assisted material retained in the manuscript was independently reviewed and verified by the authors. The manuscript was written and edited by the authors, who take full responsibility for the content of the work.

\end{document}